\renewcommand\today{July 21, 2016}
\theoremstyle{plain}  
\newtheorem{theorem}{Theorem}[section]
\newtheorem*{theorem*}{Theorem}
\newtheorem*{thma}{Theorem A}
\newtheorem*{thmb}{Theorem B}
\newtheorem*{thm*}{Theorem}
\newtheorem{corollary}[theorem]{Corollary}
\newtheorem{lemma}[theorem]{Lemma}
\newtheorem{proposition}[theorem]{Proposition}
\theoremstyle{definition}
\newtheorem{definition}[theorem]{Definition}
\theoremstyle{remark}
\newtheorem{remark}[theorem]{Remark}
\newtheorem*{claim*}{Claim}
\numberwithin{equation}{section}
\newcommand{\xra}{\xrightarrow}
\newcommand{\abs}[1]{\lvert#1\rvert}
\renewcommand{\leq}{\leqslant}
\renewcommand{\geq}{\geqslant}
\newcommand{\R}{\mathbb{R}}
\newcommand{\C}{\mathbb{C}}
\newcommand{\HH}{\mathbb{H}}
\newcommand{\U}{\mathrm{U}}
\newcommand{\GL}{\mathrm{GL}}
\newcommand{\GCD}{\mathrm{GCD}}
\DeclareMathOperator{\rk}{rk}
\DeclareMathOperator{\im}{im}
\DeclareMathOperator{\coker}{coker}
\DeclareMathOperator{\Hom}{Hom}
\DeclareMathOperator{\End}{End}
\DeclareMathOperator{\Aut}{Aut}
\DeclareMathOperator{\Ext}{Ext}
\DeclareMathOperator{\Id}{Id}
\DeclareMathOperator{\codim}{codim}
\DeclareMathAlphabet{\mathpzc}{OT1}{pzc}{m}{it}
\begin{document}

\title[Birationality of moduli spaces of Higgs bundles]{Birationality of moduli spaces of twisted $\U(p,q)$-Higgs bundles}
\author{Peter B. Gothen}
\author{Azizeh Nozad}
\address{Centro de
  Matem\'atica da Universidade do Porto \\
Faculdade de Ci\^encias da Universidade do Porto \\
Rua do Campo Alegre, s/n \\ 4169-007 Porto \\ Portugal }
\email{pbgothen@fc.up.pt}
\email{azizehnozad@gmail.com}
\subjclass[2010]{14D20 (Primary) 14H60, 53C07 (Secondary)}

\keywords{Higgs bundles, quiver bundles, indefinite unitary group,
  birationality of moduli}

\thanks{
  Partially supported by CMUP (UID/MAT/00144/2013), the projects
  PTDC/MAT-GEO/0675/2012 and PTDC/MAT-GEO/2823/2014 (first author) and
  grant SFRH/BD/51166/2010 (second author), funded by FCT (Portugal)
  with national and where applicable European structural funds through
  the programme FEDER, under the partnership agreement PT2020.
  The authors acknow\-ledge support from U.S. National Science
  Foundation grants DMS 1107452, 1107263, 1107367 "RNMS: GEometric
  structures And Representation varieties" (the GEAR Network)}

\begin{abstract} 
  A $\U(p,q)$-Higgs bundle on a Riemann surface (twisted by a line
  bundle) consists of a pair of holomorphic vector bundles, together
  with a pair of (twisted) maps between them. Their moduli spaces
  depend on a real parameter $\alpha$. In this paper we study wall
  crossing for the moduli spaces of $\alpha$-polystable twisted
  $\U(p,q)$-Higgs bundles. Our main result is that the moduli spaces
  are birational for a certain range of the parameter and we deduce
  irreducibility results using known results on Higgs bundles. Quiver
  bundles and the Hitchin--Kobayashi correspondence play an essential
  role.
\end{abstract}

\maketitle

\section{Introduction}
Holomorphic vector bundles with extra structure on a Riemann surface
$X$ have been intensively studied over the last decades. Higgs bundles
constitute an important example, not least due to the non-abelian
Hodge Theorem
\cite{corlette:1988,donaldson:1987,Hitchin:1987,simpson:1988,simpson:1992},
which identifies the moduli space of Higgs bundles with the character
variety for representations of the fundamental group.  Another
important example is that of quiver bundles.  A quiver $Q$ is a
directed graph and a $Q$-bundle on $X$ is a collection of vector
bundles, indexed by the vertices of $Q$, and morphisms, indexed by the
arrows of $Q$.  The natural stability condition for quiver bundles
depends on real parameters and hence so do the corresponding moduli
spaces. The stability condition stays the same in chambers but
wall-crossing phenomena arise and can be used in the study of the
moduli spaces. An early spectacular success for this approach is
Thaddeus' proof of the rank two Verlinde formula \cite{thaddeus:1994},
using Bradlow pairs \cite{bradlow:1991}, which are examples of
\emph{triples}. Triples are $Q$-bundles for a quiver with two vertices
and a single arrow connecting them. Moduli spaces of triples have been
studied extensively, using wall-crossing techniques. Without being
exhaustive, we mention
\cite{bradlow-garcia-prada-gothen:2004}, where connectedness
and irreducibility results for triples were studied, and the work of
Mu\~noz \cite{munoz:2008,munoz:2009,munoz:2010} and
Mu\~noz--Ortega--V{\'a}zquez-Gallo
\cite{munoz-ortega-vazques:2007,munoz-ortega-vazques:2009} on finer
topological invariants, such as Hodge numbers. More generally, chains (introduced by \'Alvarez-C\'onsul--Garc{\'\i}a-Prada in \cite{alvarez-garcia-prada:2001})
are $Q$-bundles for a quiver of type $A_n$. Chains have also been
studied using wall crossing techniques; we mention here the work of
Alvar\'ez-Consul--Garc\'\i{}a-Prada--Schmitt \cite{Schmitt:2006},
Garc\'\i{}a-Prada--Heinloth--Schmitt
\cite{garcia-heinloth-schmitt:2014} and Garc\'\i{}a-Prada--Heinloth
\cite{garcia-heinloth:2013}.

A natural question to ask is to what extent wall crossing techniques
can be extended to moduli of $Q$-bundles for more general quivers. Our
aim in this paper is to investigate the situation when $Q$ has
oriented cycles, as opposed to the case of chains.
Since the number of
effective stability parameters is one less than the number of vertices
of the quiver, in order to encounter wall crossing phenomena, we
are led to considering the following quiver as the simplest
non-trivial case:
\begin{equation}
  \label{eq:quiver-Q}
\xymatrix{
\bullet\ar@{<-}@/_1.2pc/[r]&\bullet\ar@{<-}@/_1.2pc/[l].
}
\medskip
\end{equation} 
For such quivers it becomes relevant to consider \emph{twisted}
$Q$-bundles, meaning that to each arrow one associates a fixed line bundle
twisting the corresponding morphism.

Quiver bundles for the quiver (\ref{eq:quiver-Q}) are closely related
to Higgs bundles through the notion of $G$-Higgs bundles. These are
the appropriate objects for extending the non-abelian Hodge Theorem to
representations of the fundamental group in a \emph{real} reductive Lie group
$G$ (see, e.g., \cite{garcia-gothen-mundet:2009b,gothen:2014}). The relevant case here is that of $G=\U(p,q)$. Indeed, a
{$\U(p,q)$-Higgs bundle} is a {twisted} $Q$-bundle for the quiver
(\ref{eq:quiver-Q}), twisted by the canonical bundle $K$ of $X$.
Allowing for twisting by an arbitrary line bundle $L$ on $X$, an
\emph{$L$-twisted $\U(p,q)$-Higgs bundle} is a quadruple
$E=(V,W,\beta,\gamma)$, where $V$ and $W$ are vector bundles of rank
$p$ and $q$, respectively, and the morphisms are $\beta\colon W\to
V\otimes L$ and $\gamma\colon V\to W\otimes L$.  The stability notion
for $Q$-bundles for the quiver \eqref{eq:quiver-Q} depends on a real
parameter $\alpha$ and the value which is relevant for the non-abelian
Hodge Theorem is $\alpha=0$.  

We denote by $\mathcal{M}_\alpha(t)$ the moduli space of
$\alpha$-semistable $L$-twisted $\U(p,q)$-Higgs bundles of type
$t=(p,q,a,b)=(\rk(V),\rk(W),\deg(V),\deg(W))$ and by
$\mathcal{M}^s_\alpha(t) \subset \mathcal{M}_\alpha(t)$ the subspace
of $\alpha$-stable $L$-twisted $\U(p,q)$-Higgs bundles. We show that
the parameter $\alpha$ is constrained to lie in an interval $\alpha_m
\leq \alpha \leq \alpha_M$ (with $\alpha_m=-\infty$ and
$\alpha_M=\infty$ if $p=q$) and the stability condition changes at a
discrete set of critical values $\alpha_c$ for $\alpha$. 

Our main result is the following theorem (see
Theorem~\ref{birational} below).
\begin{thma}
  Fix a type $t=(p,q,a,b)$. Let $\alpha_c$ be a critical value. If
  either one of the following conditions holds:
\begin{itemize}
\item[$(1)$] $a/p-b/q>-\deg(L)$, $q\leq p$ and $0\leq\alpha_c^\pm<\frac{2pq}{pq-q^2+p+q}\big(b/q-a/p-\deg(L)\big)+\deg(L)$,
\item[$(2)$] $a/p-b/q<\deg(L)$, $p\leq q$ and $\frac{2pq}{pq-p^2+p+q}(b/q-a/p+\deg(L))-\deg(L)<\alpha_c^\pm\leq 0$.
\end{itemize}
Then the moduli spaces $\mathcal{M}^s_{\alpha _c^-}(t)$ and
$\mathcal{M}^s_{\alpha _c^+}(t)$ are birationally equivalent. 
\end{thma}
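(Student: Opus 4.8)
The strategy for proving birationality across a critical value $\alpha_c$ is the standard wall-crossing approach: the moduli spaces $\mathcal{M}^s_{\alpha_c^-}(t)$ and $\mathcal{M}^s_{\alpha_c^+}(t)$ differ only on the locus of objects whose stability status actually changes as $\alpha$ crosses $\alpha_c$, the so-called \emph{flip loci}. The plan is to show that these flip loci have strictly positive codimension in each of the two moduli spaces; then the complements are isomorphic open dense subsets and the spaces are birational. So let me think about what I'd need.

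First I would characterize the flip loci. An $\alpha_c^-$-stable but $\alpha_c^+$-unstable bundle $E=(V,W,\beta,\gamma)$ must, at $\alpha=\alpha_c$, acquire a proper subobject $E'=(V',W',\beta',\gamma')$ that is $\alpha_c$-destabilizing from one side, i.e. $E'$ is a sub-Higgs-bundle with $\alpha_c$-slope equal to that of $E$ (a strictly semistable configuration at the wall). This gives a short exact sequence $0\to E'\to E\to E''\to 0$ of $\U(p,q)$-Higgs bundles, and at the wall both $E'$ and $E''$ are $\alpha_c$-semistable of the same slope. I would parametrize these extensions: the flip locus sits inside the image of a space fibered over pairs $(E',E'')$ of lower type, with fiber the extension group, which here is a hypercohomology $\mathbb{H}^1$ of the complex computing $\operatorname{ParHom}(E'',E')\to \operatorname{ParHom}(E'',E'\otimes L)^{\oplus ?}$ governing morphisms of $\U(p,q)$-Higgs bundles (the relevant deformation complex, using the $\operatorname{SParEnd}$/$\operatorname{ParHom}$ machinery set up earlier).

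The heart of the argument is the dimension count. I would compute the dimension of the flip locus as (dimension of the family of subobjects $E'$) + (dimension of the family of quotients $E''$) + (dimension of the extension space $\mathbb{H}^1$) $-$ (automorphisms contributing to the projectivization), and compare it to $\dim \mathcal{M}^s_{\alpha_c^\pm}(t)$, which is given by the dimension of the deformation complex of $E$ itself, namely $1-\chi(\operatorname{SParEnd}(E))$ via the relevant Riemann--Roch / index computation. The difference of these two expressions reduces, after cancellation, to a quantity of the form $\chi(\operatorname{ParHom}(E'',E')) + \chi(\operatorname{ParHom}(E',E''))$ together with $\mathbb{H}^0$ and $\mathbb{H}^2$ correction terms coming from stability (at the wall both factors are $\alpha_c$-semistable of equal slope, so these Hom and the dual $\mathbb{H}^2$ groups are controlled). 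The codimension I expect to come out to an expression that is manifestly positive precisely under the numerical hypotheses $(1)$ and $(2)$ in the statement, with the assumptions $q\le p$ (resp. $p\le q$) and the explicit bound on $\alpha_c$ entering to guarantee positivity.

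\textbf{Main obstacle.} The step I expect to be hardest — and where the precise hypotheses of the theorem must be spent — is bounding the $\mathbb{H}^0$ (and dually $\mathbb{H}^2$) terms in the index computation, equivalently showing that a generic extension in the flip locus genuinely fails to be $\alpha_c^{\mp}$-stable only on a positive-codimension set, and that no component of the flip locus accidentally fills up the whole moduli space. Because the quiver $\eqref{eq:quiver-Q}$ has an oriented cycle (unlike the chain case), both $\beta$ and $\gamma$ contribute and the vanishing results available for chains do not apply directly; one must instead use $\alpha_c$-semistability of the factors $E'$, $E''$ at the wall to bound $\dim\mathbb{H}^0(\operatorname{ParHom}(E'',E'))$ and hence control the excess. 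I would make the bound explicit by expanding $\chi(\operatorname{ParHom}(E'',E'))$ via Riemann--Roch in terms of the ranks and degrees, substituting the slope-equality relation that holds at $\alpha=\alpha_c$, and checking that the resulting codimension is strictly positive exactly in the stated parameter range; the asymmetry between cases $(1)$ and $(2)$ reflects which of the two factors controls the estimate when $q\le p$ versus $p\le q$.
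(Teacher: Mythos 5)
Your overall skeleton is the same as the paper's (flip loci, dimension count via extensions of the Jordan--H\"older factors, positive codimension implies birationality), but there is a genuine gap at the decisive step, and your proposed way of closing it would not work. After the cancellations in the dimension count (for $m=2$ factors, say), the codimension of the flip locus comes out as $-\chi(t_1,t_2)$, the negative Euler characteristic of the Hom-complex between the two Jordan--H\"older factors in the direction \emph{not} used for the extension class. You propose to establish positivity by ``expanding $\chi$ via Riemann--Roch, substituting the slope-equality relation that holds at $\alpha=\alpha_c$, and checking positivity in the stated parameter range.'' This cannot succeed: by Proposition~\ref{chi}, $\chi(t_1,t_2)$ depends on the individual degrees of the factors $Q_1,Q_2$ through terms like $(q_1-p_1)(\deg W_2 - \deg V_2)$, and these degrees are not determined by the type $t$ together with the single linear relation imposed by slope equality at the wall; they can a priori make $\chi$ positive. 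The paper's proof of the needed inequality $\chi(E',E)<0$ (Lemma~\ref{bund for Euler}) requires a substantially different mechanism: one associates to the complex $\mathpzc{Hom}^\bullet(E',E)$ an auxiliary twisted $Q$-bundle $\widetilde{E}$ for a three-vertex quiver, constructs a solution of the quiver vortex equations on $\widetilde{E}$ from the solutions on $E$ and $E'$ (Lemma~\ref{vortex}), deduces via the Hitchin--Kobayashi correspondence that $\widetilde{E}$ is $(\alpha,2\alpha)$-polystable (Theorem~\ref{Polystable}), and then applies the resulting slope inequalities to the subobject $\ker(a_0)$ and the quotient built from $\coker(\phi_a)$, $\coker(\phi_b)$ to bound $\deg(\mathpzc{Hom}^0)-\deg(\mathpzc{Hom}^1)$. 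Nothing in your proposal supplies this input or an equivalent of it.

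Two smaller points. First, you assign the numerical hypotheses $(1)$ and $(2)$ only to the final positivity check, but in the paper they do double duty: via Proposition~\ref{surjective} they force $\beta$ (resp.\ $\gamma$) to be injective, which (a) gives the $\HH^2$-vanishing of Proposition~\ref{vanishing main} needed both for the smoothness/dimension formula of $\mathcal{M}^s_{\alpha_c^\pm}(t)$ and for the constancy of $\dim\mathbb{P}(\Ext^1(Q_2,Q_1))$ as the factors vary, and (b) ensures that $p_i-q_i$ have the same sign for all Jordan--H\"older factors, which is a hypothesis of Lemma~\ref{bund for Euler}. Second, your claim that the $\HH^0$ terms are the main difficulty is slightly off target: $\HH^0(\mathpzc{Hom}^\bullet(Q_j,Q_i))$ vanishes for free for non-isomorphic $\alpha_c$-stable factors of equal $\alpha_c$-slope (Proposition~\ref{vanishing}); the real difficulties are the $\HH^2$-vanishing and, above all, the sign of $\chi$.
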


Under suitable co-primality conditions on the topological invariants
$(p,q,a,b)$ we also have results for the full moduli spaces
$\mathcal{M}_\alpha(t)$; we refer to Theorem~\ref{birational}
below for the precise result.

A systematic study of $\U(p,q)$-Higgs bundles was carried out in
\cite{bradlow-garcia-prada-gothen:2003}, based on results for
holomorphic triples from \cite{Bradlow,bradlow-garcia-prada-gothen:2004}. In
particular, it was shown that the moduli space of $\U(p,q)$-Higgs
bundles is irreducible (again under suitable co-primality
conditions). Using these results, we deduce the following corollary to
our main theorem (see Theorem~\ref{irreducible} below).

\begin{thmb}
Let $L=K$ and fix a type $t=(p,q,a,b)$. Suppose that $(p+q,a+b)=1$ and
that $\tau=\frac{2pq}{p+q}(a/p-b/q)$ satisfies $\abs{\tau}\leq\min\{p,q\}(2g-2)$. Suppose that either one of the following conditions holds:
\begin{itemize}
\item[$(1)$] $a/p-b/q>-(2g-2) $, $q\leq p$ and $0\leq\alpha<\frac{2pq}{pq-q^2+p+q}\big(b/q-a/p-(2g-2)\big)+2g-2$,
\item[$(2)$] $a/p-b/q<2g-2$, $p\leq q$ and $\frac{2pq}{pq-p^2+p+q}(b/q-a/p+2g-2)-(2g-2)<\alpha\leq 0$.
\end{itemize}
Then the moduli space $\mathcal{M}_\alpha(t)$ is irreducible.  
\end{thmb}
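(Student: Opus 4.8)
The plan is to deduce Theorem B (irreducibility of the moduli space at a given $\alpha$) from Theorem A (birationality across critical values) together with the known irreducibility results for $\U(p,q)$-Higgs bundles at the natural parameter value $\alpha = 0$. The strategy is a chain argument: show that any $\alpha$ in the stated range can be connected to $\alpha = 0$ by a path in parameter space that crosses only finitely many critical values, each of which falls within the range where Theorem A applies, so that birationality propagates along the chain and irreducibility is transported from the endpoint $\alpha = 0$.

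First I would fix attention on case $(1)$ (with $L = K$, so $\deg(L) = 2g-2$); case $(2)$ is entirely symmetric under interchanging the roles of $V$ and $W$, or equivalently $(p,a) \leftrightarrow (q,b)$, so it suffices to treat one case and invoke this symmetry. I would then recall from the earlier structural analysis of the parameter space that the critical values $\alpha_c$ form a discrete set inside the allowed interval $[\alpha_m, \alpha_M]$, and that on each open chamber between consecutive critical values the moduli space $\mathcal{M}_\alpha(t)$ is constant (independent of $\alpha$ within the chamber). The next step is to verify that the interval $0 \le \alpha < \frac{2pq}{pq-q^2+p+q}(b/q - a/p - (2g-2)) + (2g-2)$ specified in Theorem B is precisely the set of parameters obtained by applying the constraint in Theorem A, so that every critical value $\alpha_c$ lying strictly between $0$ and the chosen $\alpha$ satisfies condition $(1)$ of Theorem A and hence yields a birational equivalence $\mathcal{M}^s_{\alpha_c^-}(t) \simeq \mathcal{M}^s_{\alpha_c^+}(t)$.

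Having set this up, I would run the transport of irreducibility as follows. The base case is $\alpha = 0$: here the coprimality hypothesis $(p+q, a+b) = 1$ forces semistability and stability to coincide, so $\mathcal{M}_0(t) = \mathcal{M}^s_0(t)$, and the bound $\abs{\tau} \le \min\{p,q\}(2g-2)$ is exactly the condition under which the cited results of \cite{bradlow-garcia-prada-gothen:2003} guarantee that this moduli space is nonempty and irreducible. Moving away from $\alpha = 0$, each time the path crosses a critical value $\alpha_c$ in the admissible range, Theorem A gives a birational equivalence between the stable loci on either side; since a birational equivalence preserves irreducibility (the two spaces share a common dense open subset, so one is irreducible iff the other is), irreducibility is carried across the wall. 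Within each chamber the moduli space does not change, and the coprimality condition continues to ensure $\mathcal{M}_\alpha(t) = \mathcal{M}^s_\alpha(t)$ throughout, so at each stage we are comparing full moduli spaces. Iterating across the finitely many walls between $0$ and the target $\alpha$ then yields that $\mathcal{M}_\alpha(t)$ is irreducible.

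The main obstacle I anticipate is the bookkeeping that reconciles the two bounds: I must check that the numerical constraint appearing in Theorem A, namely $0 \le \alpha_c^\pm < \frac{2pq}{pq-q^2+p+q}(b/q - a/p - (2g-2)) + (2g-2)$, is in fact satisfied by \emph{every} critical value encountered along the path from $0$ up to the chosen $\alpha$, and not merely at the endpoints. Because the expression defining the upper bound depends on the type $t$ but not on $\alpha$, this should reduce to the monotonicity observation that the admissible interval in Theorem A is an interval containing $0$, so any critical value below the target lies inside it — but one must confirm that no critical value in this range fails the accompanying sign hypotheses ($a/p - b/q > -(2g-2)$ and $q \le p$), which are conditions on the type alone and hence hold uniformly. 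A secondary point requiring care is nonemptiness: the birational equivalences only transport irreducibility between nonempty spaces, so I would note that nonemptiness at $\alpha = 0$ (from the cited results, under the $\tau$-bound) together with the birational equivalences guarantees nonemptiness, and hence genuine irreducibility rather than the vacuous irreducibility of the empty set, throughout the range.
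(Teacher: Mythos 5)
Your proposal is correct and follows essentially the same route as the paper: the paper's proof likewise establishes irreducibility and non-emptiness of $\mathcal{M}_0(t)$ via \cite[Theorem~6.5]{bradlow-garcia-prada-gothen:2003} (using the coprimality and Toledo-invariant hypotheses) and then transports irreducibility to the given $\alpha$ by the birational equivalences of Theorem~\ref{birational}. Your write-up merely spells out the chain/bookkeeping details (constancy within chambers, coprimality giving $\mathcal{M}_\alpha=\mathcal{M}^s_\alpha$ via Lemma~\ref{lem:no-alpha-independent-semistability}, and non-emptiness propagating along the chain) that the paper leaves implicit.
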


A related work is the recent preprint by Biquard--Garc\'\i{}a-Prada--R\'ubio
\cite{biquard-garcia-rubio:2015}, which studies $G$-Higgs bundles for
\emph{any} non-compact $G$ of hermitian type. Their focus is different from
ours, in that they adopt a general Lie theoretic approach and study
special properties such as rigidity for maximal $G$-Higgs
bundles, whereas wall crossing phenomena are not studied. On the other
hand it
is similar in spirit in allowing for arbitrary values of the stability
parameter and, indeed, our Proposition~\ref{prop-MW1} for twisted
$\U(p,q)$-Higgs bundles is a special
case of the Milnor--Wood inequality for general $G$ proved by these authors (when $L=K$). A different generalization, namely to \emph{parabolic}
$\U(p,q)$-Higgs bundles, has appeared in the work of
Garc\'\i{}a-Prada--Logares--Mu\~noz \cite{garcia-logares-munoz:2009}.

This paper is organized as follows. In Section~\ref{sec:basic-results}
we give some definitions and basic results on quiver bundles. In
Section~\ref{sec:constraints} we analyze how the $\alpha$-stability
condition constrains the parameter range for fixed type
$t=(p,q,a,b)$, prove the Milnor--Wood type inequality for
$\alpha$-semistable twisted $\U(p,q)$-Higgs bundles mentioned
above, and study vanishing of the second hypercohomology group of
the deformation complex and deduce smoothness results for the moduli
space. These results provide essential input for the
analysis in Section~\ref{sec:crossing} of the loci where the moduli
space changes when crossing a critical value. Finally, in
Section~\ref{bi}, we put our results together and prove our main theorems.

This paper is, in part, based on the second author's Ph.D. thesis \cite{nozad:2016}.

\section{Definitions and basic results}
\label{sec:basic-results}
In this section we recall definitions and relevant facts on quiver bundles, from
\cite{Gothen:2005} and \cite{Prada:2003}, that will be needed in the
sequel. We give the results for general $Q$-bundles. This generality is needed
since more general $Q$-bundles naturally appear in the study of
twisted $\U(p,q)$-Higgs bundles (see Section~\ref{sec:bound-chi}).

\subsection{Quivers} A \emph{quiver} $Q$ is a directed graph specified by a set of vertices $Q_0$, a set of arrows $Q_1$ and head and tail maps $h,t : Q_1\to Q_0$. We shall assume that $Q$ is finite. 
\subsection{Twisted quiver sheaves and bundles}
 Let $X$ be a compact Riemann surface, let $Q$ be a quiver and let $M=\{M_a\}_{a\in Q_1}$ be a collection of finite rank locally free sheaves of $\mathcal{O}_X$-modules. 
\begin{definition}
An \emph{$M$-twisted $Q$-sheaf} on $X$ is a pair $E = (V, \varphi)$, where $V$ is a collection of coherent sheaves $V_i$ on $X$, for each $i \in Q_0$, and $\varphi$ is a collection of morphisms $\varphi_a: V_{ta}\otimes M_a\to V_{ha}$, for each $a\in Q_1$, such that $V_i =0$ for all but finitely many $i \in Q_0$, and $\varphi_a=0$ for all but finitely many $a\in Q_1$.
\par A \emph{holomorphic $M$-twisted $Q$-bundle} is an $M$-twisted $Q$-sheaf $E = (V, \varphi)$ such that the sheaf $V_i$ is a holomorphic vector bundle, for each $i \in Q_0$. 
\par We shall not distinguish vector bundles and locally free finite rank sheaves.
\end{definition}
\par A morphism between twisted $Q$-sheaves $(V, \varphi)$ and $(W, \psi)$ on $X$ is given by a collection of morphisms $f_i:V_i\to W_i$, for each $i\in Q_0$, such that the diagrams 
\[
\xymatrix{
V_{ta}\otimes M_a\ar[r]^{\varphi_a}\ar[d]^{f_{ta}\otimes 1}&V_{ha}\ar[d]^{f_{ha}}\\
W_{ta}\otimes M_a\ar[r]^{\psi_a}&W_{ha}
}
\]
commute for every $a\in Q_1$.
\par In this way $M$-twisted $Q$-sheaves form an abelian category. The
notions of $Q$-subbundles and quotient $Q$-bundles, as well as simple
$Q$-bundles are defined in the obvious way. The subobjects $(0,0)$ and
$E$ itself are called the \emph{trivial subobjects}. The \emph{type}
of a $Q$-bundle $E= (V, \varphi)$ is given by $$t(E)=(\rk(V_i);
\deg(V_i))_{ i\in Q_0},$$ where $\rk(V_i)$ and $\deg(V_i))$ are the
rank and degree of $V_i$, respectively. We sometimes write $\rk(E)=
\rk(\bigoplus V_i)$ and call it the \emph{rank of $E$}. Note that the type is
independent of $\varphi$.
\subsection{Stability}
Fix a tuple $\boldsymbol{\alpha}=(\alpha_i)\in\mathbb{R}^{|Q_0|}$ of
real numbers. For a non-zero $Q$-bundle ${E}=(V,\varphi)$, the
associated \emph{$\boldsymbol{\alpha}$-slope} is defined as
$$\mu_{\boldsymbol{\alpha}}({E})=\frac{\underset{i\in Q_0}{\sum}\big(\alpha_i \rk(V_i)+\deg(V_i)\big)}{\underset{i\in Q_0}{\sum}\rk(V_i)}.$$

\begin{definition}
  A $Q$-bundle ${E}=(V,\varphi)$ is said to be
  \emph{$\boldsymbol{\alpha}$-(semi)stable} if, for all non-trivial
  subobjects ${F}$ of ${E}$,
  $\mu_{\boldsymbol{\alpha}}({F})
    <(\leq)\mu_{\boldsymbol{\alpha}}({E})$. An
  \emph{$\boldsymbol{\alpha}$-polystable} $Q$-bundle is a finite
  direct sum of $\boldsymbol{\alpha}$-stable $Q$-bundles, all of them
  with the same $\boldsymbol{\alpha}$-slope.

  A $Q$-bundle ${E}$ is \emph{strictly
    $\boldsymbol{\alpha}$-semistable} if and only if there is a
  non-trivial subobject ${F}\subset {E}$ such that
  $\mu_{\boldsymbol{\alpha}}({F})=\mu_{\boldsymbol{\alpha}}({E})$.
\end{definition}

\begin{remark}\label{zero}
  If we translate the parameter vector
  $\boldsymbol{\alpha}=(\alpha_i)_{i\in Q_0}$ by a global constant $c
  \in\mathbb{ R}$, obtaining $\boldsymbol{\alpha}'=(\alpha_i')_{i\in
    Q_0}$, with $\alpha_i'=\alpha_i+c$, then
  $\mu_{\boldsymbol{\alpha}'}({E}) =
  \mu_{\boldsymbol{\alpha}}({E})-c$. Hence the stability
  condition does not change under global translations. So we may
  assume that $\alpha_{0}=0$.
\end{remark} 

The following is a well-known fact (see, e.g., \cite[Exercise
$2.5.6.6$]{Schmitt:2008}). Consider a strictly
$\boldsymbol{\alpha}$-semistable $Q$-bundle ${E}=(V,\varphi)$. As it
is not $\boldsymbol{\alpha}$-stable, ${E}$ admits a subobject
${F}\subset {E}$ of the same $\boldsymbol{\alpha}$-slope. If ${F}$ is
a non-zero subobject of ${E}$ of minimal rank and the same
$\boldsymbol{\alpha}$-slope, it follows that ${F}$ is
$\boldsymbol{\alpha}$-stable. Then, by induction, one obtains a flag of
subobjects
\begin{displaymath}
  {F}_0=0\subset {F}_1\subset\cdots\subset {F}_m={E}
\end{displaymath}
where
$\mu_{\boldsymbol{\alpha}}({F}_i/{F}_{i-1})=\mu_{\boldsymbol{\alpha}}({E})$
for $1\leq i\leq m$, and where the subquotients 
$F_i/F_{i-1}$ are $\boldsymbol{\alpha}$-stable $Q$-bundles. This is
the \emph{Jordan-H\"{o}lder filtration} of $\mathcal{E}$, and it is
not unique. However, the \emph{associated graded
  object} 
$$\mathrm{Gr}({E}):=\oplus_{i=1}^m{F}_i/{F}_{i-1}$$
is unique up to isomorphism.
\begin{definition}
  Two semi-stable $Q$-bundles ${E}$ and
  ${E}'$ are said to be \emph{$S$-equivalent} if
  $\mathrm{Gr}(E)\cong \mathrm{Gr}(E').$
\end{definition}

\begin{remark}
  It is a standard fact that each $S$-equivalence class contains a
  unique polystable representative.  Moreover, if a $Q$-bundle ${E}$
  is stable, then the induced Jordan-H\"{o}lder filtration is trivial,
  and so the $S$-equivalence class of $E$ coincides with its
  isomorphism class.
\end{remark}

\subsection{The gauge theory equations}
Throughout this paper, given a smooth bundle $M$ on $X$, $\Omega^k(M)$
(resp.\ $\Omega^{i,j}(M)$) is the space of smooth $M$-valued $k$-forms
(resp.\ $(i,j)$-forms) on $X$, $\omega$ is a fixed K\"{a}hler form on
$X$, and $ \Lambda: \Omega^{i,j}(M)\to \Omega^{i-1,j-1}(M)$ is
contraction with $\omega$. The gauge equations will also depend on a
fixed collection $q$ of Hermitian metrics $q_a$ on $M_a$, for each
$a\in Q_1$, which we fix once and for all. Let
$\mathcal{E}=(V,\varphi)$ be a $M$-twisted $Q$-bundle on $X$. A
Hermitian metric on $\mathcal{E}$ is a collection $H$ of Hermitian
metrics $H_i$ on $V_i$, for each $i\in Q_0$ with $V_i\neq 0$. To
define the gauge equations on $\mathcal{E}$, we note that
$\varphi_a:V_{ta}\otimes M_a\to V_{ha}$ has a smooth adjoint morphism
$\varphi_a^\ast:V_{ha}\to V_{ta}\otimes M_a$ with respect to the
Hermitian metrics $H_{ta}\otimes q_a$ on $V_{ta}\otimes M_a$ and
$H_{ha}$ on $V_{ha}$, for each $a\in Q_1$, so it makes sense to
consider the compositions $\varphi_a\circ\varphi_a^\ast$ and
$\varphi_a^\ast\circ\varphi_a$. The following definitions are found in
\cite{Prada:2003}.  Let $\boldsymbol{\alpha}$ be the stability
parameter.\par Define $\boldsymbol{\tau}$ to be collections of real
numbers $\tau_i$, for which
\begin{equation}
\tau_i=\mu_{\boldsymbol{\alpha}}(\mathcal{E})-\alpha_i, \mbox{ }i\in Q_0.\label{relation}
\end{equation}
Then, by using Remark $\ref{zero}$, $\boldsymbol{\alpha}$ can be recovered from $\boldsymbol{\tau}$ as follows
\begin{align*}
\alpha_i=\tau_{0}-\tau_i, \mbox{ } i\in Q_0.
\end{align*}
\begin{definition}
A Hermitian metric $H$ satisfies the \emph{quiver $\boldsymbol{\tau}$-vortex equations} if
\begin{equation}
\sqrt{-1}\Lambda F(V_i)+\underset{i=ha}{\sum}\varphi_a\varphi_a^{\ast}-\underset{i=ta}{\sum}\varphi_a^{\ast}\varphi_a=\tau_i \mathrm{Id}_{V_i}\label{vortex equations1}
\end{equation}
for each $i\in Q_0$ such that $V_i\neq 0$, where $F(V_i)$ is the
curvature of the Chern connection associated to the metric $H_i$ on
the holomorphic vector bundle $V_i$.
\end{definition}
The following is the \emph{Hitchin-Kobayashi correspondence} between the twisted quiver vortex equations and the stability condition for holomorphic twisted quiver bundles, given in \cite[Theorem 3.1]{Prada:2003}:
\begin{theorem}\label{Hitchin-Kobayashi}
A holomorphic $Q$-bundle ${E}$  is $\boldsymbol{\alpha}$-polystable if and only if it admits a Hermitian metric $H$ satisfying the quiver $\boldsymbol{\tau}$-vortex equations $(\ref{vortex equations1})$, where $\boldsymbol{\alpha}$ and $\boldsymbol{\tau}$ are related by $(\ref{relation})$. 
\end{theorem}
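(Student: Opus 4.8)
The statement to be proved is a Hitchin--Kobayashi correspondence, so the proof splits into two implications of very different character. The plan is to treat the easy direction (a metric solving the vortex equations forces $\boldsymbol{\alpha}$-polystability) by a Chern--Weil integration argument, and the hard direction (polystability produces a solution) by the moment-map / Donaldson-functional machinery, realizing the left-hand side of $(\ref{vortex equations1})$ as the moment map for the action of the unitary gauge group on the space of Hermitian metrics.

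\emph{First direction.} Suppose $H$ solves $(\ref{vortex equations1})$ and let $F=(V',\varphi')\subset E$ be a non-trivial subobject. Using $H$, let $\pi_i\in\Omega^0(\End V_i)$ be the orthogonal projection onto $V_i'$; the subobject condition $\varphi_a(V'_{ta}\otimes M_a)\subseteq V'_{ha}$ translates into commutation relations between the $\pi_i$ and the $\varphi_a$. Taking the trace of $(\ref{vortex equations1})$ composed with $\pi_i$, summing over $i\in Q_0$, and integrating against $\omega$ over $X$, the curvature terms assemble by Chern--Weil into $\deg(V_i')$ plus a manifestly non-negative second-fundamental-form term, while the $\varphi_a\varphi_a^{\ast}$ and $\varphi_a^{\ast}\varphi_a$ terms pair up with the correct signs. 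On the right-hand side, summing $\sum_i \tau_i\,\rk(V_i')$ and substituting the relation $\tau_i=\mu_{\boldsymbol\alpha}(E)-\alpha_i$ from $(\ref{relation})$ is precisely what converts the constants $\tau_i$ into the $\boldsymbol\alpha$-slope. Rearranging yields $\mu_{\boldsymbol\alpha}(F)\le\mu_{\boldsymbol\alpha}(E)$, with equality only if the second fundamental form vanishes, i.e.\ $F$ is a holomorphic and metric direct summand preserved by $\varphi$; iterating over a Jordan--H\"older filtration gives a $\boldsymbol\alpha$-polystable splitting.

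\emph{Second direction.} This is the analytically substantial part. Fix the holomorphic structure $(\dbar_E,\varphi)$ and regard the left-hand side of $(\ref{vortex equations1})$ as a map from the space of Hermitian metrics $H$ on $E$ to $\bigoplus_i\Omega^0(\End V_i)$; up to the constant $\boldsymbol\tau$ this is exactly the moment map for the unitary gauge group. I would introduce a Donaldson-type functional $\mathcal{M}(H_0,H)$ on the space of metrics, built from the Bott--Chern secondary class together with the quadratic $\varphi$-terms, whose first variation is the deviation of $(\ref{vortex equations1})$ from $\boldsymbol\tau\,\Id$ and which is convex along the geodesics $H_t=H_0\exp(ts)$ of the non-positively curved space of metrics. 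A solution then exists as soon as $\mathcal{M}$ is bounded below and proper, so the crux is to show that $\boldsymbol\alpha$-stability implies this coercivity.

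The main obstacle is precisely this last implication and the Uhlenbeck--Yau weak-subbundle argument it rests on. Following the standard scheme, I would first establish the a priori $C^0$-estimate controlling $\sup\lvert s\rvert$ by the energy, so that failure of properness produces a sequence of metrics with $\sup\lvert s_k\rvert\to\infty$ but bounded energy; after rescaling one extracts a nonzero limiting endomorphism $s_\infty$ that is weakly holomorphic, lying in $L^2_1$ and being $\dbar_E$- and $\varphi$-invariant in the appropriate weak sense. By Uhlenbeck--Yau's regularity theorem the eigenspaces of $s_\infty$ define a coherent $Q$-subsheaf $F\subset E$, and tracking the functional along the degeneration shows that $F$ violates the $\boldsymbol\alpha$-slope inequality, contradicting stability. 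For the polystable (non-stable) case one applies the stable case to each Jordan--H\"older factor of $\mathrm{Gr}(E)$ and takes the orthogonal direct sum of the resulting metrics; since all factors share the common $\boldsymbol\alpha$-slope, the relation $(\ref{relation})$ assigns them the common value $\boldsymbol\tau$, so the assembled metric solves $(\ref{vortex equations1})$ on $E$.
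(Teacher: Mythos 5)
The paper does not actually prove this statement: it is imported verbatim from \'Alvarez-C\'onsul--Garc\'{\i}a-Prada \cite[Theorem 3.1]{Prada:2003}, so there is no internal proof to compare against. Your outline does reproduce the strategy of that cited source --- Chern--Weil plus the Gauss--Codazzi second-fundamental-form term for the implication ``solution $\Rightarrow$ polystable'', and the Donaldson functional, coercivity, and Uhlenbeck--Yau weakly holomorphic subsheaves for the converse --- and the way you convert the constants $\tau_i$ into the $\boldsymbol{\alpha}$-slope through the relation $(\ref{relation})$ is exactly the right bookkeeping. However, as written this is a road map rather than a proof: every analytically substantial step is announced in the conditional (``I would introduce\dots'', ``I would first establish\dots'') and none is carried out. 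In particular the $C^0$ estimate, the extraction and regularity of the limiting endomorphism $s_\infty$, the verification that the resulting coherent subsheaves of the $V_i$ are preserved by the morphisms $\varphi_a$ (so that they genuinely assemble into a $Q$-subobject rather than merely a collection of subsheaves), and the identification of the asymptotic slope of the functional with the $\boldsymbol{\alpha}$-slope defect are precisely the content of the theorem, and they are asserted rather than established. If the result is to be used as in the paper, the correct move is to cite \cite{Prada:2003}; if it is to be proved, those deferred steps \emph{are} the proof.
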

\subsection{Twisted $\U(p,q)$-Higgs bundles}
An important example of twisted $Q$-bundles, which is our main object study in this paper, is that of twisted $\U(p,q)$-Higgs bundles on $X$ given in the following. It is to be noted that twisted $\U(p,q)$-Higgs bundles in our study is twisted with the same line bundle for each arrow. 
\begin{definition}
 Let $L$ be a line bundle on $X$. A \emph{$L$-twisted $\U(p,q)$-Higgs bundle} is a twisted $Q$-bundle for the quiver 
 \[ \xymatrix{
V\ar@{<-}@/_1.2pc/[r]&W\ar@{<-}@/_1.2pc/[l].
}
\]
\\
where each arrow is twisted by $L$, and such that $\rk(V)=p$ and
$\rk(W)=q$. Thus a $L$-twisted $\U(p,q)$-Higgs bundle is a quadruple $E=(V, W,\beta, \gamma)$, where $V$ and $W$ are holomorphic vector bundles on $X$ of ranks $p$ and $q$ respectively, and$$\beta:W\longrightarrow V\otimes L,$$ $$ \gamma :V\longrightarrow W\otimes L,$$ are holomorphic maps. The \emph{type} of a twisted $\mathrm{U}(p,q)$-Higgs bundle $E=(V,W,\beta,\gamma)$ is defined by a tuple of integers $t(E):=(p,q,a,b)$ determined by ranks and degrees of $V$ and $W$, respectively.
 \end{definition}
Note that $K$-twisted $\U(p,q)$-Higgs bundles can be seen as a
 special case of $G$-Higgs bundles (\cite{hitchin:1992}, see also \cite{bradlow-garcia-prada-gothen:2003,bradlow-garcia-prada-gothen:hss-higgs,garcia-gothen-mundet:2009b,gothen:2014}), where $G$ is a real form of a
 complex reductive Lie group and $K$ is the canonical bundle of the
 Riemann surface $X$.



\subsection{Gauge equations}
For this $L$-twisted quiver bundle one can consider the general quiver
equations as defined in (\ref{vortex equations1}).
\par Let $\boldsymbol{\tau}=(\tau_1,\tau_2)$ be a pair of real numbers. A Hermitian metric $H$ satisfies the \emph{$L$-twisted quiver $\boldsymbol{\tau}$-vortex equations} on twisted $\U(p,q)$-Higgs bundle $E$ if
\begin{equation}\label{vortex equations}
  \begin{aligned}
    \sqrt{-1}\Lambda F_{H_V}+\beta\beta^\ast-\gamma^\ast\gamma&=\tau_1\Id_{V},\\
    \sqrt{-1}\Lambda F_{H_W}+\gamma\gamma^\ast-\beta^\ast\beta&=\tau_2\Id_{W}.
  \end{aligned}
\end{equation}
where $F_{H_V}$ and $F_{H_W}$ are the curvatures of the Chern connections
associated to the metrics $H_V$ and $H_W$, respectively.
\begin{remark}
\begin{itemize}
\item[(i)] If a holomorphic twisted $\U(p,q)$-bundle $E$ admits a Hermitian metric satisfying the $\boldsymbol{\tau}$-vortex  equations, then taking traces in $(\ref{vortex equations})$, summing for $V$ and $W$, and integrating over $X$, we see that the parameters $\tau_1$ and $\tau_2$ are constrained by $p\tau_1+q\tau_2=\deg(V)+\deg(W)$.
\item[(ii)] If $L=K$ the equations are conformally invariant and so depend only on the Riemann surface structure on $X$. In this case they are \emph{the Hitchin equations} for the $\U(p,q)$-Higgs bundle.
\end{itemize}
\end{remark}
\subsection{Stability}
Let $E=(V,W,\beta,\gamma)$ be a twisted $\U(p,q)$-Higgs bundle, and $\alpha$ be a real number; $\alpha$ is called the $\emph{stability parameter}$. The definitions of the previous section specialize as follows. 
 The \emph{$\alpha$-slope} of $E$ is defined to be 
 \begin{align*}
 \mu_{\alpha}(E)&=\mu(E)+\alpha\frac{p}{p+q},
 \end{align*}
 where $\mu(E):=\mu(V\oplus W)$. A twisted $\U(p,q)$-bundle $E$ is \emph{$\alpha$-semistable} if, for every proper (non-trivial) subobject $F\subset E$, 
$$\mu_{\alpha}(F)\leq\mu_{\alpha}(E).$$
Further, $E$ is \emph{$\alpha$-stable} if this inequality is always strict. A twisted $\U(p,q)$-bundle is called \emph{$\alpha$-polystable} if it is the direct sum of $\alpha$-stable twisted $\U(p,q)$-Higgs bundles of the same $\alpha$-slope.
 \begin{remark}
 The stability can be defined using quotients as for vector bundles. Note that for any subobject $E'=(V',W')$ we obtain an induced quotient bundle $E/E'=(V/V',W/W',\overline{\beta},\overline{\gamma})$ and $E$ is $\alpha$-(semi)stable if $\mu_\alpha(E/E')(\geq)>\mu_\alpha(E)$. 
 \end{remark}
 The following is a special case of the Hitchin-Kobayashi correspondence between the twisted quiver vortex equations and the stability condition for holomorphic twisted quiver bundles, stated in Proposition $\ref{Hitchin-Kobayashi}$.
 \begin{theorem}\label{solution}
 A solution to $(\ref{vortex equations})$ exists if and only if $E$ is $\alpha$-polystable for $\alpha=\tau_2-\tau_1$.
 \end{theorem}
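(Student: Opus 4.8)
The statement is a specialization of the general Hitchin--Kobayashi correspondence of Theorem~\ref{Hitchin-Kobayashi}, so the plan is to invoke that theorem after carefully matching the two sets of parameters. By definition $E=(V,W,\beta,\gamma)$ is an $L$-twisted $Q$-bundle for the two-vertex quiver (\ref{eq:quiver-Q}), with $V_1=V$, $V_2=W$ and the two twisted morphisms given by $\beta$ and $\gamma$. First I would write out the general quiver vortex equations (\ref{vortex equations1}) for this quiver, vertex by vertex. At vertex $1$ the unique arrow with head $1$ carries $\beta$ (contributing $+\beta\beta^\ast$) while the unique arrow with tail $1$ carries $\gamma$ (contributing $-\gamma^\ast\gamma$), and symmetrically at vertex $2$. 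Hence (\ref{vortex equations1}) becomes exactly the pair (\ref{vortex equations}) with $\boldsymbol{\tau}=(\tau_1,\tau_2)$, where $\beta^\ast,\gamma^\ast$ are the adjoints computed using the fixed Hermitian metric on $L$.

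Next I would match the stability parameters. Using the translation invariance of Remark~\ref{zero}, I normalize the quiver parameter to $\boldsymbol{\alpha}=(\alpha,0)$. Then for an arbitrary subobject $F$ with $\rk(V')=p'$ and $\rk(W')=q'$ one computes $\mu_{\boldsymbol{\alpha}}(F)=\mu(F)+\alpha p'/(p'+q')=\mu_\alpha(F)$, so that $\boldsymbol{\alpha}$-(poly)stability of the $Q$-bundle coincides verbatim with $\alpha$-(poly)stability of the twisted $\U(p,q)$-Higgs bundle. It then remains to verify that this $\boldsymbol{\alpha}$ is precisely the one attached to $\boldsymbol{\tau}=(\tau_1,\tau_2)$ by the relation (\ref{relation}). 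Taking traces, summing over $V$ and $W$, and integrating over $X$ shows that a solution forces the constraint $p\tau_1+q\tau_2=\deg(V)+\deg(W)$; granting this, a short computation gives $\mu_{\boldsymbol{\alpha}}(E)=(p\tau_1+q\tau_2+\alpha p)/(p+q)$, and substituting $\alpha=\tau_2-\tau_1$ yields $\mu_{\boldsymbol{\alpha}}(E)=\tau_2$. Consequently $\tau_2=\mu_{\boldsymbol{\alpha}}(E)-\alpha_2$ and $\tau_1=\mu_{\boldsymbol{\alpha}}(E)-\alpha_1$, i.e.\ (\ref{relation}) holds exactly when $\alpha=\tau_2-\tau_1$.

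With the dictionary in place the conclusion is immediate: by Theorem~\ref{Hitchin-Kobayashi}, $E$ admits a metric solving the $\boldsymbol{\tau}$-vortex equations if and only if it is $\boldsymbol{\alpha}$-polystable, which under the identifications above says precisely that (\ref{vortex equations}) is solvable if and only if $E$ is $\alpha$-polystable for $\alpha=\tau_2-\tau_1$. The only delicate point is the parameter bookkeeping: one must keep straight the global translation freedom in $\boldsymbol{\alpha}$ and the degree constraint on $\boldsymbol{\tau}$, and confirm that the single effective combination $\alpha=\tau_2-\tau_1$ is the right invariant; everything else is a direct specialization of the general correspondence.
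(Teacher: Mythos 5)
Your proposal is correct and follows exactly the route the paper intends: the paper states this result as a direct specialization of the general Hitchin--Kobayashi correspondence of Theorem~\ref{Hitchin-Kobayashi} without writing out the details, and your parameter bookkeeping (normalizing $\boldsymbol{\alpha}=(\alpha,0)$, using the trace constraint $p\tau_1+q\tau_2=\deg(V)+\deg(W)$, and checking that (\ref{relation}) reduces to $\alpha=\tau_2-\tau_1$) is precisely the omitted verification. No issues.
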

\subsubsection{Critical values}
\par A twisted $\U(p,q)$-Higgs bundle $E$ is strictly $\alpha$-semistable if and only if there is a proper subobject $F=(V',W')$ such that $\mu_\alpha(F)=\mu_\alpha(E)$, i.e.,
$$\mu(V'\oplus W')+\alpha\frac{p'}{p'+q'}=\mu(V\oplus W)+\alpha\frac{p}{p+q}.$$
The case in which the terms containing $\alpha$ drop from the above equality and $E$ is strictly $\alpha$-semistable for all values of $\alpha$, i.e.,
$$\frac{p}{p+q}=\frac{p'}{p'+q'},\mbox{ and}$$
$$\mu(V\oplus W)=\mu(V'\oplus W')$$
is called \emph{$\alpha$-independent strict semistability}.

\begin{definition}
  For a fixed type $(p,q,a,b)$ a value of $\alpha$ is called a
  \emph{critical value} if there exist integers $p', q', a'$ and $b'$
  such that $\frac{p'}{p'+q'}\neq\frac{p}{p+q}$ and
  $\frac{a'+b'}{p'+q'}+\alpha\frac{p'}{p'+q'}=\frac{a+b}{p+q}+\alpha
  \frac{p}{p+q}$, with $0\leq p'\leq p$, $0\leq q'\leq q$
  and $(p',q')\neq (0,0)$. We say that
  $\alpha$ is \emph{generic} if it is not critical.
\end{definition}

\begin{lemma}
\label{lem:no-alpha-independent-semistability}
In the following situations $\alpha$-independent semistability cannot occur:
\begin{itemize}
\item[(i)]\cite[Lemma~2.7]{bradlow-garcia-prada-gothen:2004} There is an integer $m$ such that $\GCD(p+q, d_1+d_2-mp)=1$.
\item[(ii)] If $\GCD(p,q)=1$, for $p\neq q$.
\end{itemize}
\end{lemma}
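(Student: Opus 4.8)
The plan is to unwind the definition of $\alpha$-independent strict semistability and reduce matters to a purely numerical condition on the ranks. By the definition above, $E=(V,W,\beta,\gamma)$ admits $\alpha$-independent strict semistability precisely when there is a non-trivial subobject $F=(V',W')$, of type $(p',q')$ with $0\leq p'\leq p$, $0\leq q'\leq q$ and $(p',q')\notin\{(0,0),(p,q)\}$, satisfying \emph{both}
\[
\frac{p'}{p'+q'}=\frac{p}{p+q}\qquad\text{and}\qquad\mu(V'\oplus W')=\mu(V\oplus W).
\]
Consequently, a sufficient condition for $\alpha$-independent strict semistability to be impossible is that the first (rank) equation have no non-trivial solution $(p',q')$ at all; the degree equation need not even be used. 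Part $(i)$ is then exactly \cite[Lemma~2.7]{bradlow-garcia-prada-gothen:2004}, read through the correspondence between twisted $\U(p,q)$-Higgs bundles and holomorphic triples (with $d_1=a$ and $d_2=b$), so the only point genuinely requiring an argument is $(ii)$.

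For $(ii)$, I would assume $\GCD(p,q)=1$ and let $(p',q')$ solve the rank equation. Clearing denominators gives $p'(p+q)=p(p'+q')$, i.e.\ $p'q=pq'$. Since $\GCD(p,q)=1$ one deduces $p\mid p'$; writing $p'=kp$ and substituting yields $q'=kq$. As $0\leq p'\leq p$ and $p\geq 1$, this forces $k\in\{0,1\}$, hence $(p',q')\in\{(0,0),(p,q)\}$ is trivial. Therefore no non-trivial subobject satisfies even the rank part of the $\alpha$-independence conditions, and $\alpha$-independent strict semistability cannot occur.

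There is no analytic content here: the whole substance is the logical reduction, together with the elementary fact that coprimality of $(p,q)$ rigidifies the admissible ranks of subobjects. I expect the only things to watch to be minor bookkeeping: the degenerate types with $p'=0$ or $q'=0$ are handled automatically, since $p'q=pq'$ then forces $p'=q'=0$ when $p,q\geq 1$; and the hypothesis $p\neq q$ plays no role beyond delimiting the interesting regime, as $\GCD(p,q)=1$ together with $p=q$ would leave only the degenerate case $p=q=1$. As a cross-check, $(ii)$ can also be derived from $(i)$: from $\GCD(p,q)=1$ one gets $\GCD(p,p+q)=1$, so $p$ is a unit modulo $p+q$ and one may choose $m$ with $a+b-mp\equiv 1 \pmod{p+q}$, giving $\GCD(p+q,a+b-mp)=1$.
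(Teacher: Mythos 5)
Your proof is correct and follows essentially the same route as the paper: reduce $\alpha$-independent strict semistability to the rank identity $\frac{p'}{p'+q'}=\frac{p}{p+q}$ for a non-trivial subobject and observe that coprimality of $p$ and $q$ rules out any such solution (the paper states this contradiction in one line, while you spell out the divisibility argument $p'q=pq'\Rightarrow(p',q')=(kp,kq)$). Your added remark that $(ii)$ follows from $(i)$ via $\GCD(p,p+q)=1$ is a valid cross-check not present in the paper, but the core argument coincides.
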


\begin{proof}
To prove $(ii)$, on the contrary assume that $E=(V,W,\beta,\gamma)$ is a $\alpha$-semistable twisted $\U(p,q)$-Higgs bundle with a proper subobject $E'=(V',W',\beta',\gamma')$ such that
$$
\mu(V'\oplus W')+\alpha\frac{p'}{p'+q'}=
\mu(V\oplus W)+\alpha\frac{p}{p+q}
$$
and
\begin{equation*}
 \dfrac{p'}{p'+q'}=\dfrac{p}{p+q},
\end{equation*}
where $p'$ and $q'$ are the ranks of $V'$ and $W'$ respectively. Since $E'$ is proper, either $p'<p$ or $q'<q$ and then the equality $\dfrac{p'}{p'+q'}=\dfrac{p}{p+q}$ contradicts that $p$ and $q$ are co-prime. 
\end{proof}
Fix a type $t=(p,q,a,b)$. We denote the moduli space of
$\alpha$-polystable twisted $\U(p,q)$-Higgs bundles
$E=(V,W,\beta,\gamma)$ which have the type $t(E)=(p,q,a,b)$, where
$a=\deg(V)$ and $b=\deg(W)$,
by $$\mathcal{M}_\alpha(t)=\mathcal{M}_\alpha(p,q,a,b),$$ and the
moduli space of $\alpha$-stable twisted $\U(p,q)$-Higgs bundles by
$\mathcal{M}_\alpha^s(t)$. A Geometric Invariant Theory construction
of the moduli space 
follows from the general constructions of Schmitt \cite[Theorem
$2.5.6.13$]{Schmitt:2008}, thinking of twisted $\U(p,q)$-Higgs
bundles in terms of quivers.
\subsection{Deformation theory of twisted $\U(p,q)$-Higgs bundles}
\begin{definition}\label{deformation complex of U(p,q)}
Let  $E=(V,W,\beta,\gamma)$ be a $L$-twisted $\U(p,q)$-Higgs bundle and $E'=(V',W',\beta',\gamma')$ a $L$-twisted $\U(p',q')$-Higgs bundle. We introduce the following notation:
\begin{align*}
\mathpzc{Hom}^0&\mathrm{=}\Hom(V',V)\oplus \Hom(W',W), \\
\mathpzc{Hom}^1&\mathrm{=}\Hom(V',W\otimes L)\oplus \Hom(W',V\otimes L).
\end{align*}
With this notation we consider the complex of sheaves 
\begin{equation}
\mathpzc{Hom}^\bullet(E',E):\mbox{ }\mathpzc{Hom}^0\overset{a_0}{\longrightarrow} \mathpzc{Hom}^1\label{deformation complex}
\end{equation}  defined by
\begin{align*}
a_0(f_1,f_2)&=\big(\phi_a(f_1,f_2),\phi_b(f_1,f_2)\big),\mbox{ for } (f_1,f_2)\in \mathpzc{Hom}^0
\end{align*}
where $$\phi_a: \mathpzc{Hom}^0\to\Hom(V',W\otimes L)\hookrightarrow \mathpzc{Hom}^1\mbox{ and }\phi_b: \mathpzc{Hom}^0\to\Hom(W',V\otimes L)\hookrightarrow \mathpzc{Hom}^1$$
are given by 
\begin{align*}
\phi_a(f_1,f_2)&=(f_2\otimes Id_L)\circ \gamma'-\gamma\circ f_1),\\
\phi_b(f_1,f_2)&=(f_1\otimes Id_L)\circ\beta'-\beta\circ f_2).
\end{align*}
The complex $\mathpzc{Hom}^\bullet(E',E)$ is called the
\emph{$\Hom$-complex}. This is a special case of the $\Hom$-complex
for $Q$-bundles defined in \cite{Gothen:2005}, and also for $G$-Higgs
bundles (see, e.g., \cite{Ramanan}). We shall write
$\mathpzc{End}^\bullet(E)$ for $\mathpzc{Hom}^\bullet(E,E)$.
\end{definition}

The following proposition follows from \cite[Theorem 4.1 and Theorem 5.1]{Gothen:2005}.
\begin{proposition}\label{Hom and hyper}
Let  $E$ be a $L$-twisted $\U(p,q)$-Higgs bundle
and $E'$ a $L$-twisted $\U(p',q')$-Higgs
bundle. Then
there are natural isomorphisms 
\begin{align*}
\Hom(E',E)&\cong \mathbb{H}^0(\mathpzc{Hom}^\bullet (E',E))\\
\Ext^1(E',E)&\cong \mathbb{H}^1(\mathpzc{Hom}^\bullet (E',E))
\end{align*}
and a long exact sequence associated to the complex $\mathpzc{Hom}^{\bullet}(E',E)$:
\begin{multline}\label{hyperH}
  0\longrightarrow\mathbb{H}^0(\mathpzc{Hom}^\bullet(E',E))\longrightarrow H^0(\mathpzc{Hom}^0)\longrightarrow H^0(\mathpzc{Hom}^1)\longrightarrow\mathbb{H}^1(\mathpzc{Hom}^\bullet(E',E))\\
  \longrightarrow H^1(\mathpzc{Hom}^0)\longrightarrow H^1(\mathpzc{Hom}^1)\longrightarrow\mathbb{H}^2(\mathpzc{Hom}^\bullet(E',E))\longrightarrow 0. 
\end{multline}
\end{proposition}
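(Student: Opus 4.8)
The plan is to deduce everything by specializing the general homological results for twisted $Q$-bundles in \cite{Gothen:2005} to the quiver \eqref{eq:quiver-Q}. The first observation, already recorded in Definition~\ref{deformation complex of U(p,q)}, is that $\mathpzc{Hom}^\bullet(E',E)$ is exactly the Hom-complex of \cite{Gothen:2005} for this quiver: its degree-zero term $\mathpzc{Hom}^0=\Hom(V',V)\oplus\Hom(W',W)$ is $\bigoplus_{i\in Q_0}\Hom(V'_i,V_i)$, its degree-one term $\mathpzc{Hom}^1$ is the sum over the two arrows of the appropriate twisted Hom-sheaves, and the differential $a_0=(\phi_a,\phi_b)$ is the commutator-type map $f\mapsto \varphi f-f\varphi'$ with the Higgs fields. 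Once this dictionary is in place, the isomorphisms $\Hom(E',E)\cong\mathbb{H}^0(\mathpzc{Hom}^\bullet(E',E))$ and $\Ext^1(E',E)\cong\mathbb{H}^1(\mathpzc{Hom}^\bullet(E',E))$ are immediate from \cite[Theorem 4.1]{Gothen:2005}.

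It is worth unpacking what these two isomorphisms say. For the two-term complex $\mathpzc{Hom}^0\xrightarrow{a_0}\mathpzc{Hom}^1$ one has $\mathbb{H}^0=\ker\big(H^0(a_0)\colon H^0(\mathpzc{Hom}^0)\to H^0(\mathpzc{Hom}^1)\big)$, so a class in $\mathbb{H}^0$ is a pair $(f_1,f_2)$ of global bundle maps $f_1\colon V'\to V$, $f_2\colon W'\to W$ with $\phi_a(f_1,f_2)=\phi_b(f_1,f_2)=0$. These two vanishing conditions are precisely the commutativity of the defining diagrams, i.e.\ $(f_1,f_2)$ intertwines $(\beta',\gamma')$ with $(\beta,\gamma)$, so $\mathbb{H}^0=\Hom(E',E)$ by inspection. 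The identification $\mathbb{H}^1\cong\Ext^1(E',E)$ is the Yoneda-type statement that degree-one hypercohomology of the Hom-complex classifies extensions of $E'$ by $E$ in the abelian category of twisted $Q$-bundles, which is the substance of \cite[Theorem 4.1]{Gothen:2005}.

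For the long exact sequence I would simply invoke \cite[Theorem 5.1]{Gothen:2005}, but it can also be seen directly as the hypercohomology long exact sequence of a two-term complex of sheaves on the curve $X$. Concretely, the hypercohomology spectral sequence $E_1^{p,q}=H^q(X,\mathpzc{Hom}^p)\Rightarrow\mathbb{H}^{p+q}(\mathpzc{Hom}^\bullet(E',E))$ has nonzero columns only for $p\in\{0,1\}$ (the complex has two terms), and, since $X$ is a compact Riemann surface, nonzero rows only for $q\in\{0,1\}$. The resulting $2\times2$ block unwinds into exactly the displayed seven-term exact sequence, with the maps $H^k(\mathpzc{Hom}^0)\to H^k(\mathpzc{Hom}^1)$ induced by $a_0$ and the edge maps $H^0(\mathpzc{Hom}^1)\to\mathbb{H}^1$ and $H^1(\mathpzc{Hom}^1)\to\mathbb{H}^2$ coming from the spectral sequence.

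The genuinely nonformal point, and the only step where real work is hidden, is the identification $\Ext^1(E',E)\cong\mathbb{H}^1$: interpreting first hypercohomology as a space of extension classes in the quiver-bundle category requires the homological algebra of \cite{Gothen:2005} (an explicit \v{C}ech/Yoneda description of extensions and the verification that it matches the connecting homomorphism). By contrast the equality $\Hom(E',E)=\mathbb{H}^0$ is a direct unwinding of definitions, and the long exact sequence is a formal consequence of the complex having length one over a curve, so these present no difficulty.
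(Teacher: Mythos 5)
Your proposal is correct and takes essentially the same route as the paper, which proves this proposition simply by citing Theorems 4.1 and 5.1 of \cite{Gothen:2005} after observing that $\mathpzc{Hom}^\bullet(E',E)$ is the Gothen--King Hom-complex for the quiver \eqref{eq:quiver-Q}. Your additional unpacking of $\mathbb{H}^0$ and of the hypercohomology spectral sequence of a two-term complex over a curve is accurate but not needed beyond the citation.
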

When $E=E'$, we have $\End(E)=\Hom(E,E)\cong \mathbb{H}^0(\mathpzc{Hom}^\bullet (E,E))$.
\begin{definition}
We denote by $\chi(E',E)$ the hypercohomology Euler characteristic for the complex $\mathpzc{Hom}^{\bullet}(E',E)$, i.e.
$$\chi(E',E)=\dim\mathbb{H}^0(\mathpzc{Hom}^\bullet(E',E))-\dim\mathbb{H}^1(\mathpzc{Hom}^\bullet(E',E))+\dim\mathbb{H}^2(\mathpzc{Hom}^\bullet(E',E)).$$
\end{definition}
As an immediate consequence of the long exact sequence $(\ref{hyperH})$ and the Riemann-Roch formula we can obtain the following.
\begin{proposition}\label{chi}
For any twisted $\U(p,q)$-Higgs bundle $E$ and twisted $\U(p',q')$-Higgs
bundle $E'$ we have
\begin{equation*}
  \begin{aligned}[t]
    \chi(E',E)&=\chi(\mathpzc{Hom}^0)-\chi(\mathpzc{Hom}^1)\\
    &=(1-g)(\rk(\mathpzc{Hom}^0)-\rk(\mathpzc{Hom}^1))
      +\deg(\mathpzc{Hom}^0)-\deg(\mathpzc{Hom}^1)\\
    &=(1-g)\big(p'p+q'q-p'q-q'p\big)+(q'-p')(\deg(W)-\deg(V))+\\
     &\hspace{1.5cm}(q-p)(\deg(V')-\deg(W'))-(pq'+p'q)\deg(L)
  \end{aligned}
\end{equation*}
\end{proposition}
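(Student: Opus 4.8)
The plan is to establish the three displayed equalities in turn: the first from the hypercohomology long exact sequence, the second from Riemann--Roch, and the third by an explicit rank and degree bookkeeping.

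First I would derive $\chi(E',E)=\chi(\mathpzc{Hom}^0)-\chi(\mathpzc{Hom}^1)$ from exactness of the sequence (\ref{hyperH}) furnished by Proposition~\ref{Hom and hyper}. Since that sequence is exact, the alternating sum of the dimensions of its seven nonzero terms vanishes. Regrouping this alternating sum, the three hypercohomology terms assemble into $\dim\mathbb{H}^0-\dim\mathbb{H}^1+\dim\mathbb{H}^2=\chi(E',E)$, while the four sheaf-cohomology terms assemble into $\chi(\mathpzc{Hom}^0)-\chi(\mathpzc{Hom}^1)$, where $\chi(\mathpzc{Hom}^j)$ denotes the ordinary Euler characteristic $\sum_k(-1)^k\dim H^k(\mathpzc{Hom}^j)$. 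This is just additivity of Euler characteristics along an exact sequence and needs no computation.

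For the second equality I would invoke Riemann--Roch on the genus $g$ curve $X$: for any vector bundle $F$ one has $\chi(F)=\deg(F)+(1-g)\rk(F)$. Applying this to $F=\mathpzc{Hom}^0$ and $F=\mathpzc{Hom}^1$ and subtracting gives
\[
\chi(E',E)=(1-g)\big(\rk(\mathpzc{Hom}^0)-\rk(\mathpzc{Hom}^1)\big)+\deg(\mathpzc{Hom}^0)-\deg(\mathpzc{Hom}^1).
\]

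Finally, for the third equality I would compute the ranks and degrees directly from $\mathpzc{Hom}^0=\Hom(V',V)\oplus\Hom(W',W)$ and $\mathpzc{Hom}^1=\Hom(V',W\otimes L)\oplus\Hom(W',V\otimes L)$. The ranks follow from $\rk\Hom(A,B)=\rk(A)\rk(B)$, giving $\rk(\mathpzc{Hom}^0)=p'p+q'q$ and $\rk(\mathpzc{Hom}^1)=p'q+q'p$, so the coefficient of $(1-g)$ is $p'p+q'q-p'q-q'p$. For the degrees I would use $\deg\Hom(A,B)=\rk(A)\deg(B)-\rk(B)\deg(A)$ together with $\deg(W\otimes L)=\deg(W)+q\deg(L)$ and $\deg(V\otimes L)=\deg(V)+p\deg(L)$ to handle the twists by $L$; substituting and collecting the result according to which of $\deg(V),\deg(W),\deg(V'),\deg(W'),\deg(L)$ each term involves produces the claimed expression. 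The argument is entirely routine, and the only place demanding care is this last step: the main obstacle is the sign bookkeeping in the $\deg\Hom$ formula and, above all, correctly tracking the $L$-twist, which is the sole source of the $-(pq'+p'q)\deg(L)$ term. No conceptual difficulty arises.
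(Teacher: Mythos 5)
Your proposal is correct and follows exactly the route the paper intends: the paper gives no written proof, merely asserting the proposition as "an immediate consequence of the long exact sequence \eqref{hyperH} and the Riemann--Roch formula", which is precisely the additivity-of-Euler-characteristics plus Riemann--Roch plus rank/degree bookkeeping argument you spell out. Your explicit computation of $\deg(\mathpzc{Hom}^0)-\deg(\mathpzc{Hom}^1)$, including the $L$-twist contribution $-(pq'+p'q)\deg(L)$, checks out against the stated formula.
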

Recall that the type of a $\U(p,q)$-Higgs bundle
$E=(V,W,\beta,\gamma)$ is $t(E)=(p,q,a,b)$, where $a=\deg(V)$,
$b=\deg(W)$. The previous proposition shows that $\chi(E',E)$ only
depends on the types $t'=t(E')$ and $t=t(E)$ of $E'$ and $E$,
respectively, so we may use the notation
$$\chi(t',t):=\chi(E',E).$$
\begin{remark}
  \label{rem:direct-sum-deform}
  Suppose that $E=E'\oplus E''$. Then it is clear that the
  $\Hom$-complexes satisfy:
\begin{equation*}
\mathpzc{Hom}^\bullet(E,E)=\mathpzc{Hom}^\bullet(E',E')\oplus \mathpzc{Hom}^\bullet(E'',E'')\oplus \mathpzc{Hom}^\bullet(E'', E')\oplus \mathpzc{Hom}^\bullet(E', E''),
\end{equation*}
and so the hypercohomology groups have an analogous direct sum
decomposition. 
\end{remark}

\begin{lemma}\label{extension}
For any extension $0 \rightarrow E' \rightarrow E \rightarrow E'' \rightarrow 0$ of twisted $\U(p,q)$-Higgs bundles,
$$\chi(E,E) = \chi(E', E') + \chi( E'', E'') + \chi( E'', E') + \chi(E', E'').$$
\end{lemma}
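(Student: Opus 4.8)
The plan is to reduce the statement for a genuine extension to the direct sum decomposition already recorded in Remark~\ref{rem:direct-sum-deform}, by exploiting the fact (Proposition~\ref{chi}) that $\chi$ is a function of the types alone. First I would observe that in any short exact sequence $0\to E'\to E\to E''\to 0$ the ranks and degrees are additive, so that the types satisfy $t(E)=t(E')+t(E'')$. In particular, $E$ has exactly the same type as the split object $\widetilde{E}:=E'\oplus E''$.

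Next, by Proposition~\ref{chi} the quantity $\chi(F_1,F_2)$ depends only on the types $t(F_1)$ and $t(F_2)$. Since $E$ and $\widetilde{E}$ have the same type, this immediately yields
\[
\chi(E,E)=\chi\big(t(E),t(E)\big)=\chi\big(t(\widetilde{E}),t(\widetilde{E})\big)=\chi(\widetilde{E},\widetilde{E}).
\]
Thus it suffices to evaluate $\chi(\widetilde{E},\widetilde{E})$, and here the point is that $E'$ and $E''$ occur as the \emph{literal} direct summands of $\widetilde{E}$, so the bundles appearing on the right-hand side of the claimed identity are exactly those produced by the splitting.

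For the split object the conclusion is immediate from Remark~\ref{rem:direct-sum-deform}: the $\Hom$-complex decomposes as
\[
\mathpzc{Hom}^\bullet(\widetilde{E},\widetilde{E})=\mathpzc{Hom}^\bullet(E',E')\oplus\mathpzc{Hom}^\bullet(E'',E'')\oplus\mathpzc{Hom}^\bullet(E'',E')\oplus\mathpzc{Hom}^\bullet(E',E''),
\]
and the hypercohomology groups split accordingly. Taking dimensions and forming the alternating sum, the Euler characteristic is additive over this direct sum, giving
\[
\chi(\widetilde{E},\widetilde{E})=\chi(E',E')+\chi(E'',E'')+\chi(E'',E')+\chi(E',E''),
\]
which combined with the previous displayed equality proves the lemma.

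I expect no serious obstacle here; the only inputs requiring care are the two that make the reduction run. One must confirm that the type is genuinely additive in a short exact sequence (which follows from additivity of rank and degree), and one must use Proposition~\ref{chi} in the precise form that $\chi(F_1,F_2)$ depends only on $t(F_1)$ and $t(F_2)$, so that replacing $E$ by the isomorphic-type object $\widetilde{E}$ is legitimate. It is worth remarking that this type-invariance shortcut is exactly what lets us avoid the more delicate alternative of constructing a compatible filtration of $\mathpzc{Hom}^\bullet(E,E)$ directly for the non-split extension, where the contravariance of $\mathpzc{Hom}^\bullet$ in its first argument makes a naive additivity argument awkward. (Equivalently, one could note that the explicit formula in Proposition~\ref{chi} is bilinear in the pair of types and invoke $t(E)=t(E')+t(E'')$ directly, which gives the same four-term expansion.)
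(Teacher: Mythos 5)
Your proof is correct and follows exactly the paper's route: the paper's own argument is the one-line "since the Euler characteristic is topological, we may assume $E=E'\oplus E''$" (justified by Proposition~\ref{chi}, as you spell out via additivity of types) followed by the direct-sum decomposition of Remark~\ref{rem:direct-sum-deform}. Your version merely makes the reduction step explicit; no differences of substance.
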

\begin{proof}
  Since the Euler characteristic is topological, we may assume that
  $E=E'\oplus E''$. Now the result is immediate in view of
  Remark~\ref{rem:direct-sum-deform}.
\end{proof}

Given the identification of $\mathbb{H}^0(\mathpzc{Hom}^\bullet(E',E))$ with $\Hom(E',E)$, by Proposition $\ref{Hom and hyper}$, the following is the direct analogue of the corresponding result for semistable vector bundles.
\begin{proposition}\label{vanishing}
Let  $E=(V,W,\beta,\gamma)$ be a $L$-twisted $\U(p,q)$-Higgs bundle
and $E'=(V',W',\beta',\gamma')$ a $L$-twisted $\U(p',q')$-Higgs
bundle. If $E$ and $E'$ are both $\alpha$-semistable, then the
following holds:
\begin{itemize}
\item[(1)] If $\mu_\alpha(E)<\mu_\alpha(E')$, then $\mathbb{H}^0(\mathpzc{Hom}^\bullet(E',E))=0$.
\item[(2)] If $\mu_\alpha(E')=\mu_\alpha(E)$, and $E'$ is $\alpha$-stable, then
$$\mathbb{H}^0(\mathpzc{Hom}^\bullet(E',E))\cong\begin{cases} 
0 &\mbox{ if } E\ncong E'\\ 
\mathbb{C}&\mbox{ if } E\cong E'.\\
 \end{cases}$$
 \end{itemize}
 \end{proposition}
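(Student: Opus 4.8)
The plan is to invoke Proposition~\ref{Hom and hyper} to replace the hypercohomology group $\mathbb{H}^0(\mathpzc{Hom}^\bullet(E',E))$ by the ordinary morphism space $\Hom(E',E)$, and then to run the classical slope argument for (semi)stable objects inside the abelian category of twisted $\U(p,q)$-Higgs bundles. The one preliminary observation I would record is that $\mu_\alpha$ is a genuine slope function: setting $\deg_\alpha(E)=\deg(V)+\deg(W)+\alpha\,\rk(V)$ and $\rk(E)=\rk(V)+\rk(W)$, both quantities are additive on short exact sequences and $\mu_\alpha(E)=\deg_\alpha(E)/\rk(E)$. Hence $\alpha$-semistability has the usual equivalent quotient formulation: every nonzero quotient $Q$ of an $\alpha$-semistable $E'$ satisfies $\mu_\alpha(Q)\geq\mu_\alpha(E')$, with strict inequality for every proper nonzero quotient when $E'$ is $\alpha$-stable.

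Given a nonzero $f\colon E'\to E$, I would factor it as $E'\twoheadrightarrow \im f\hookrightarrow E$, so that $\im f$ is simultaneously a quotient of $E'$ and a subobject of $E$. Semistability of both sides then yields
$$\mu_\alpha(E')\leq\mu_\alpha(\im f)\leq\mu_\alpha(E),$$
passing to the saturation of $\ker f$ in $E'$, respectively of $\im f$ in $E$, wherever honest sub- and quotient bundles are needed (saturating only moves degrees in the favorable direction). Part~(1) is then immediate: the hypothesis $\mu_\alpha(E)<\mu_\alpha(E')$ contradicts this chain unless $\im f=0$, so $f=0$ and $\Hom(E',E)=0$.

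For part~(2), the equality $\mu_\alpha(E')=\mu_\alpha(E)$ forces $\mu_\alpha(\im f)=\mu_\alpha(E')$ for any nonzero $f$. Since $\im f$ is a quotient of the $\alpha$-stable $E'$ with slope equal to $\mu_\alpha(E')$, it cannot be a proper quotient, so $\ker f=0$ and $f$ is injective with $\im f\cong E'$. When $E\cong E'$ this identifies $\Hom(E',E)\cong\End(E')$ with a space of injective endomorphisms; an injective endomorphism has image of full rank and equal degree in each component, hence is an isomorphism, so $\End(E')$ is a finite-dimensional division algebra over $\mathbb{C}$ and therefore equals $\mathbb{C}$, yielding the $\mathbb{C}$ in the statement.

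The step I expect to be the crux is the remaining case $E\ncong E'$: one then has an injection $f\colon E'\hookrightarrow E$ whose image is a subobject of $E$ of slope exactly $\mu_\alpha(E)$, and to force $f=0$ one must upgrade this injection to an isomorphism, i.e. show the image is all of $E$. This does not follow from semistability of $E$ alone; it requires $E$ to be $\alpha$-stable, so that a subobject of equal slope is necessarily the whole object (whereupon $f$ becomes an isomorphism, contradicting $E\ncong E'$). I would therefore carry out this last part under the additional, presumably intended, hypothesis that $E$ is also $\alpha$-stable, exactly as in the classical Schur-type statement for stable vector bundles.
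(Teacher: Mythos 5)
Your argument is the standard slope argument and coincides with what the paper intends: the paper gives no proof of this proposition at all, merely asserting that, after the identification $\mathbb{H}^0(\mathpzc{Hom}^\bullet(E',E))\cong\Hom(E',E)$ of Proposition~\ref{Hom and hyper}, it is ``the direct analogue of the corresponding result for semistable vector bundles''. So your factorization $E'\twoheadrightarrow\im f\hookrightarrow E$ and the two slope inequalities are exactly the intended content, and your treatment of $(1)$ and of the case $E\cong E'$ in $(2)$ is complete. Your worry about the remaining case is well founded and is not a defect of your proof but of the statement: as written, part $(2)$ with $E\ncong E'$ is false without assuming that $E$ is also $\alpha$-stable --- take $E=E'\oplus E''$ with $E''$ any $\alpha$-semistable object of the same $\alpha$-slope; then $E$ is $\alpha$-semistable, $E\ncong E'$, yet $\Hom(E',E)\neq 0$. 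The correct conclusion from the stated hypotheses is only that a nonzero $f$ embeds $E'$ as a subobject of $E$ of equal $\alpha$-slope, and one needs $\alpha$-stability of $E$ to promote this to an isomorphism. This imprecision is harmless for the paper, since the only place Proposition~\ref{vanishing} is invoked is Lemma~\ref{stable implies simple}, which uses the case $E=E'$ with $E$ $\alpha$-stable --- precisely the case your division-algebra argument settles.
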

 \begin{definition}
A twisted $\U(p,q)$-Higgs bundle $E=(V,W,\varphi=\beta+\gamma)$ is  \emph{infinitesimally simple} if $\End(E) \cong \C$ and it is \emph{simple} if $\Aut(E) \cong \C^\ast$, where $\Aut(E)$ denotes the automorphism group of $E$.
\end{definition}

Since $L$-twisted $\U(p,q)$-Higgs bundles form an abelian category,
any automorphism is also an endomorphism. Hence, if
$(V, W, \beta, \gamma)$ is infinitesimally simple then it is
simple. Thus Proposition~\ref{vanishing} implies the following lemma.

\begin{lemma}\label{stable implies simple}
Let $(V, W, \beta, \gamma)$ be a twisted $\U(p,q)$-Higgs bundle. If $(V,W,\beta,\gamma)$ is $\alpha$-stable 
then it is simple.
\end{lemma}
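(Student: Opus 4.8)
The plan is to reduce everything to the already-established vanishing statement. Since the passage from infinitesimal simplicity to simplicity has just been recorded (in an abelian category any automorphism is an endomorphism, and if $\End(E)\cong\C$ then the invertible endomorphisms are exactly the nonzero scalars $\lambda\,\Id$, giving $\Aut(E)\cong\C^{\ast}$), it suffices to show that an $\alpha$-stable $E=(V,W,\beta,\gamma)$ is infinitesimally simple, i.e.\ that $\End(E)\cong\C$.

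First I would invoke the identification $\End(E)\cong\mathbb{H}^0(\mathpzc{Hom}^\bullet(E,E))$ coming from Proposition~\ref{Hom and hyper}. Then I would apply Proposition~\ref{vanishing}(2) with $E'=E$: the hypotheses are met because $E$ is $\alpha$-stable, hence $\alpha$-semistable, the slope condition $\mu_\alpha(E')=\mu_\alpha(E)$ holds trivially, and $E\cong E'$. The proposition therefore yields $\mathbb{H}^0(\mathpzc{Hom}^\bullet(E,E))\cong\C$, whence $\End(E)\cong\C$ and $E$ is infinitesimally simple. Combining this with the remark above gives $\Aut(E)\cong\C^{\ast}$, i.e.\ $E$ is simple.

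There is essentially no obstacle left at this stage: all the content — that a homomorphism between $\alpha$-semistable objects of equal slope with stable source vanishes unless source and target agree, in which case it is a scalar — is packaged into Proposition~\ref{vanishing}(2). As a sanity check, the same conclusion follows from the classical Schur-type argument: a nonzero endomorphism $f$ of a stable $E$ has image and kernel that are subobjects whose $\alpha$-slopes are constrained by stability, forcing $f$ to be an isomorphism, and then comparing eigenvalues (using that $X$ is a compact Riemann surface, so $\Hom(E,E)$ is finite-dimensional over $\C$) shows $f$ is a scalar multiple of $\Id$. I would nonetheless prefer the short route via Proposition~\ref{vanishing}, since it avoids re-proving these standard facts in the quiver-bundle setting.
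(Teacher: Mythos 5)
Your proof is correct and follows exactly the paper's own route: the paper likewise observes that in the abelian category of twisted $\U(p,q)$-Higgs bundles every automorphism is an endomorphism, so infinitesimal simplicity implies simplicity, and then deduces $\End(E)\cong\mathbb{H}^0(\mathpzc{Hom}^\bullet(E,E))\cong\C$ from Proposition~\ref{vanishing}(2) applied with $E'=E$. No gaps.
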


\begin{proposition}\label{smooth0}
Let $E=(V,W,\beta,\gamma)$ be an $\alpha$-stable twisted $\U(p,q)$-Higgs bundle of type $t=(p,q,a,b)$.
\begin{itemize}
\item[(1)] The space of infinitesimal deformations of $E$ is isomorphic to the first hypercohomology group $\mathbb{H}^1(\mathpzc{End}^\bullet(E))$.\\
\item[(2)]  If $\mathbb{H}^2(\mathpzc{End}^\bullet(E))=0$, then the moduli space $\mathcal{M}^s_\alpha(t)$ is smooth in a
neighborhood of the point defined by $E$ and
\begin{align*}
\dim\mathcal{M}^s_\alpha(t)&=\dim\mathbb{H}^1(\mathpzc{End}^\bullet(E))\\
&=1-\chi(E,E)=(g-1)(q-p)^2+2pq\deg(L)+1.
\end{align*}
\end{itemize}
\end{proposition}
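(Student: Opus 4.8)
The plan is to treat the two parts separately, with part~(1) essentially a citation and part~(2) a standard unobstructedness-plus-index computation.

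For part~(1), I would invoke the general deformation theory of $Q$-bundles developed in \cite{Gothen:2005}: the infinitesimal deformations of a holomorphic twisted $Q$-bundle are classified by the first hypercohomology of its $\End$-complex, and the obstructions lie in the second hypercohomology. Specializing the quiver \eqref{eq:quiver-Q} and the twisting to the present situation, this identifies the space of infinitesimal deformations of $E$ with $\mathbb{H}^1(\mathpzc{End}^\bullet(E))$, which is exactly the statement of~(1). The identification $\Ext^1(E,E)\cong\mathbb{H}^1(\mathpzc{End}^\bullet(E))$ of Proposition~\ref{Hom and hyper} makes this precise.

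For part~(2), the first step is to record that since $E$ is $\alpha$-stable it is simple by Lemma~\ref{stable implies simple}, and in fact infinitesimally simple: applying Proposition~\ref{vanishing}(2) with $E'=E$ gives $\mathbb{H}^0(\mathpzc{End}^\bullet(E))\cong\End(E)\cong\mathbb{C}$, so $\dim\mathbb{H}^0(\mathpzc{End}^\bullet(E))=1$. The smoothness now follows from the standard local (Kuranishi) description of the moduli space at a stable point: a neighbourhood of $[E]$ is modelled on the zero locus of an obstruction map $\mathbb{H}^1(\mathpzc{End}^\bullet(E))\to\mathbb{H}^2(\mathpzc{End}^\bullet(E))$, with the residual $\mathbb{C}^\ast$ of automorphisms acting trivially by scalars. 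When $\mathbb{H}^2(\mathpzc{End}^\bullet(E))=0$ this obstruction map is vacuous, so the local model is smooth of dimension $\dim\mathbb{H}^1(\mathpzc{End}^\bullet(E))$; this is the point where I expect to lean most heavily on the general slice machinery, and it is the only genuinely non-formal step.

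It remains to compute the dimension. From the definition of $\chi$ together with the vanishing $\mathbb{H}^0\cong\mathbb{C}$ and $\mathbb{H}^2=0$ one gets $\chi(E,E)=1-\dim\mathbb{H}^1(\mathpzc{End}^\bullet(E))$, hence $\dim\mathbb{H}^1(\mathpzc{End}^\bullet(E))=1-\chi(E,E)$. Finally I would evaluate the formula of Proposition~\ref{chi} at $E'=E$ (so $p'=p$, $q'=q$, $\deg(V')=\deg(V)$, $\deg(W')=\deg(W)$): the two cross-terms $(q-p)(\deg(W)-\deg(V))$ and $(q-p)(\deg(V)-\deg(W))$ cancel, leaving $\chi(E,E)=(1-g)(q-p)^2-2pq\deg(L)$, and therefore $1-\chi(E,E)=(g-1)(q-p)^2+2pq\deg(L)+1$, as claimed.
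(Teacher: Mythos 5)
Your proof is correct and follows essentially the same route as the paper, which simply cites \cite[Theorem 2.3]{Ramanan} for part (1) and refers to the analogous Proposition 2.14 of \cite{Gothen:2013} for part (2); you have merely unpacked those citations into the standard unobstructedness argument and the Euler characteristic computation from Proposition~\ref{chi}, both of which check out. No gaps.
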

\begin{proof}
Statement $(1)$ follows from \cite[Theorem $2.3$]{Ramanan}. Statement
$(2)$ is analogous to Proposition $2.14$ of \cite{Gothen:2013}. 
\end{proof}

\section{Consequences of stability and properties of the moduli space} 
\label{sec:constraints}

\subsection{Bounds on the topological invariants and Milnor--Wood inequality}

In this section we explore the constraints imposed by stability on the
topological invariants of $\U(p,q)$-Higgs bundles and on the stability
parameter $\alpha$. 

\begin{proposition}\label{bound on beta and gamma}
Let $E=(V,W,\beta,\gamma)$ be an $\alpha$-semistable twisted
$\U(p,q)$-Higgs bundle. Then the following inequalities hold.
\begin{align}
  \label{bound on gamma}
  \frac{2pq}{p+q}\big(\mu(V)-\mu(W)\big) &\leq
  \rk(\gamma)\deg(L)+\alpha\big(\rk(\gamma)-\frac{2pq}{p+q}\big), \\
  \label{bound on beta}
  \frac{2pq}{p+q}\big(\mu(W)-\mu(V)\big)&\leq
  \rk(\beta)\deg(L)+\alpha\big(\frac{2pq}{p+q}-\rk(\beta)\big).
\end{align}
Moreover, if $\deg(L)+\alpha>0$ and equality holds in \eqref{bound on gamma}
then either $E$ is strictly semistable or $p = q$ and
$\gamma$ is an isomorphism $\gamma\colon V \xra{\cong}W\otimes L$. 
Similarly, if $\deg(L)-\alpha>0$ and equality holds in \eqref{bound on
  beta} then either $E$ is strictly semistable or $p = q$  and
$\beta$ is an isomorphism $\beta\colon W\xra{\cong}V\otimes L$.
\end{proposition}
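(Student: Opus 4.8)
The plan is to derive \eqref{bound on gamma} from $\alpha$-semistability applied to two explicit subobjects built out of $\gamma$, and then to obtain \eqref{bound on beta} by the symmetry exchanging the $\U(p,q)$-Higgs bundle $(V,W,\beta,\gamma)$ with the $\U(q,p)$-Higgs bundle $(W,V,\gamma,\beta)$ and the parameter $\alpha$ with $-\alpha$. Write $r=\rk(\gamma)$, let $N=\ker(\gamma)\subseteq V$ (automatically saturated, of rank $p-r$, since $V/N$ injects into the torsion-free sheaf $W\otimes L$), and let $W_1\subseteq W$ be the saturated subbundle of rank $r$ characterised by the requirement that $W_1\otimes L$ be the saturation of $\gamma(V)$ in $W\otimes L$. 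Both $(N,0)$ and $(V,W_1)$ are genuine subobjects of $E$: for $(N,0)$ we have $\gamma(N)=0$ and $\beta(0)=0$, and for $(V,W_1)$ we have $\gamma(V)\subseteq W_1\otimes L$ by construction while $\beta(W_1)\subseteq V\otimes L$ holds trivially, so $\beta$ obstructs neither.

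Next I would assemble the bound from three relations. Writing $M:=\mu_\alpha(E)$, $\alpha$-semistability applied to $(N,0)$ gives $\deg N\le (p-r)(M-\alpha)$, and applied to $(V,W_1)$ gives $\deg V+\deg W_1+\alpha p\le (p+r)M$. The third relation is the degree estimate coming from $\gamma$: the induced map $V/N\to W\otimes L$ is injective with image $\gamma(V)$, so $\deg(V/N)=\deg\gamma(V)\le\deg(W_1\otimes L)=\deg W_1+r\deg(L)$, that is, $\deg V-\deg N\le \deg W_1+r\deg(L)$, with equality precisely when $\gamma(V)=W_1\otimes L$ is saturated. Eliminating $\deg N$ and $\deg W_1$ among these three relations and substituting $M=(\deg V+\deg W+\alpha p)/(p+q)$ produces, after simplification, exactly $\tfrac{2pq}{p+q}(\mu(V)-\mu(W))\le r\deg(L)+\alpha\big(r-\tfrac{2pq}{p+q}\big)$; the $\alpha$-contributions indeed collapse to the stated coefficient $r-\tfrac{2pq}{p+q}$. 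Inequality \eqref{bound on beta} then follows at once by applying \eqref{bound on gamma} to $(W,V,\gamma,\beta)$ with parameter $-\alpha$, which interchanges $\rk(\gamma)$ and $\rk(\beta)$ and turns the hypothesis $\deg(L)+\alpha>0$ into $\deg(L)-\alpha>0$.

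The subtler part, which I expect to be the main obstacle, is the equality discussion. Equality in \eqref{bound on gamma} forces equality simultaneously in all three relations above. Assume first that $E$ is $\alpha$-stable, so that every proper non-trivial subobject has slope strictly below $M$: then equality for $(N,0)$ forces $N=0$, hence $r=p$, and equality for $(V,W_1)$ forces $(V,W_1)=E$, hence $W_1=W$ and $r=q$, so that $p=q$; equality in the degree relation then says $\gamma(V)=W\otimes L$, and together with $N=0$ this makes $\gamma\colon V\to W\otimes L$ an isomorphism. If $E$ is not $\alpha$-stable it is strictly $\alpha$-semistable, which is the remaining alternative in the statement. I expect the positivity hypothesis $\deg(L)+\alpha>0$ to enter precisely here, in converting equality in the degree estimate into the isomorphism $\gamma\colon V\xra{\cong}W\otimes L$ and in excluding the degenerate alternatives (such as $\gamma=0$); locating and justifying its use at this step is the part of the argument I would treat most carefully. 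The statement for $\beta$ is identical under the symmetry used above.
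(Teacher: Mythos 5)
Your proof is correct and follows essentially the same route as the paper, whose own proof simply invokes ``an argument similar to [BGG, Lemma~3.24]'' --- namely the two subobjects $(\ker\gamma,0)$ and $(V,W_1)$ that you construct --- to get the intermediate bound $2p\big(\mu(V)-\mu_\alpha(E)\big)\le\rk(\gamma)\deg(L)+\alpha(\rk(\gamma)-2p)$, and then applies the same slope identities you use. Your equality discussion is in fact already complete as written: since \eqref{bound on gamma} is an exact positive combination of your three relations, equality forces equality in each, and the trichotomy follows from stability alone, so the hypothesis $\deg(L)+\alpha>0$ (inherited from the cited lemma and used elsewhere in the paper) is not actually load-bearing in your derivation and there is no remaining gap at the step you flagged.
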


\begin{proof}
An argument similar to that given in \cite[Lemma
3.24]{bradlow-garcia-prada-gothen:2003} shows that 
\begin{equation*}
  2p\big(\mu(V)-\mu_\alpha(E)\big)\leq 
   \rk(\gamma)\deg(L)+\alpha(\rk(\gamma)-2p);
\end{equation*}
Similarly,
\begin{equation*}
  2q\big(\mu(W)-\mu_\alpha(E)\big)
  \leq \rk(\beta)\deg(L)-\rk(\beta)\alpha,
\end{equation*}
Using this, the result follows immediately using the following
identities:
\begin{align*}
  \mu(V)-\mu_\alpha(E)
  &=\frac{q}{p+q}\big(\mu(V)-\mu(W)\big)-\alpha\frac{p}{p+q},\\
  \mu(W)-\mu_\alpha(E)
  &=\frac{p}{p+q}\big(\mu(W)-\mu(V)\big)-\alpha\frac{p}{p+q}.
\end{align*} 
The statement about equality for $\deg(L)-\alpha>0$ also follows as in
loc.\ cit.
\end{proof}

By analogy with the case of $\U(p,q)$-Higgs bundles (cf.\ \cite{bradlow-garcia-prada-gothen:2003}) we make the
following definition.

\begin{definition}
  The \emph{Toledo invariant} of a twisted $\U(p,q)$-Higgs bundle
  $E=(V,W,\beta,\gamma)$ is
  \begin{displaymath}
    \tau(E) = 2\frac{q\deg(V)-p\deg(W)}{p+q} =
    \frac{2pq}{p+q}\big(\mu(V)-\mu(W)\big).
  \end{displaymath}
\end{definition}

The following is the analogue of the Milnor--Wood inequality for
$\U(p,q)$-Higgs bundles
(\cite[Corollary~3.27]{bradlow-garcia-prada-gothen:2003}). When $L=K$,
it is a special case of a general result of
Biquard--Garc\'\i{}a-Prada--Rubio
\cite[Theorem~4.5]{biquard-garcia-rubio:2015}, which is valid for
$G$-Higgs bundles for any semisimple $G$ of Hermitian type.

\begin{proposition}
  \label{prop-MW1}
  Let $E=(V,W,\beta,\gamma)$ be an $\alpha$-semistable twisted
  $\U(p,q)$-Higgs bundle. Then the following inequality holds:
  \begin{displaymath}
    -\rk(\beta)\deg(L)+\alpha\big(\rk(\beta)-\frac{2pq}{p+q}\big)
    \leq \tau(E)\leq
    \rk(\gamma)\deg(L)+\alpha\big(\rk(\gamma)-\frac{2pq}{p+q}\big).
    \end{displaymath}
\end{proposition}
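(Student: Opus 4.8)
The plan is to derive Proposition~\ref{prop-MW1} as a direct consequence of Proposition~\ref{bound on beta and gamma}, which has just been established. The key observation is that the Toledo invariant is defined precisely as
\[
\tau(E) = \frac{2pq}{p+q}\big(\mu(V)-\mu(W)\big),
\]
so the left-hand sides of inequalities \eqref{bound on gamma} and \eqref{bound on beta} are exactly $\tau(E)$ and $-\tau(E)$, respectively. Thus essentially no new argument should be needed: the two bounds assemble into the single two-sided inequality.

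Concretely, I would first rewrite inequality \eqref{bound on gamma} by substituting the definition of $\tau(E)$, giving directly
\[
\tau(E) \leq \rk(\gamma)\deg(L)+\alpha\big(\rk(\gamma)-\tfrac{2pq}{p+q}\big),
\]
which is the upper bound in the Proposition. Next I would treat inequality \eqref{bound on beta}: since its left-hand side is $\frac{2pq}{p+q}(\mu(W)-\mu(V)) = -\tau(E)$, it reads
\[
-\tau(E)\leq \rk(\beta)\deg(L)+\alpha\big(\tfrac{2pq}{p+q}-\rk(\beta)\big).
\]
Multiplying through by $-1$ (reversing the inequality) yields
\[
-\rk(\beta)\deg(L)+\alpha\big(\rk(\beta)-\tfrac{2pq}{p+q}\big) \leq \tau(E),
\]
which is exactly the lower bound. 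Combining the two gives the stated chain of inequalities.

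There is no real obstacle here, since all the analytic content lives in Proposition~\ref{bound on beta and gamma}; the present statement is a repackaging. The only point demanding a little care is bookkeeping with signs when transposing the $\beta$-inequality, in particular tracking how the sign of the $\alpha$-term flips when one negates both sides. I would double-check that the coefficient $\alpha(\frac{2pq}{p+q}-\rk(\beta))$ correctly becomes $\alpha(\rk(\beta)-\frac{2pq}{p+q})$ after negation so that the lower bound matches the Proposition verbatim.

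I note that in this framing the hypotheses are automatically inherited: $\alpha$-semistability of $E$ is assumed in Proposition~\ref{bound on beta and gamma}, which is exactly the hypothesis of Proposition~\ref{prop-MW1}, so the two inequalities \eqref{bound on gamma} and \eqref{bound on beta} both apply simultaneously to the same bundle $E$, and the conclusion follows at once.
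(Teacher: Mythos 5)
Your proposal is correct and coincides with the paper's own proof: both simply rewrite inequalities \eqref{bound on gamma} and \eqref{bound on beta} in terms of $\tau(E)$ using its definition and then negate the $\beta$-inequality to obtain the lower bound. The sign bookkeeping you flag is handled correctly, so nothing further is needed.
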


\begin{proof}
In view of the definition of $\tau(E)$, we can
  write \eqref{bound on gamma} and \eqref{bound on beta} as
\begin{align}\label{bound on gamma2}
  \tau(E)&\leq
\rk(\gamma)\deg(L)+\alpha\big(\rk(\gamma)-\frac{2pq}{p+q}\big),\\
  -\tau(E)&\leq
\rk(\beta)\deg(L)+\alpha\big(\frac{2pq}{p+q}-\rk(\beta)\big)
\label{bound on beta2}
\end{align}
  from which the result is immediate.
\end{proof}

When equality holds in the Milnor--Wood inequality, more information
on the maps $\beta$ and $\gamma$ can be obtained from
Proposition~\ref{bound on beta and gamma}. In this respect we have the
following result.

\begin{proposition}
  \label{prop:MW2}
  Let $E=(V,W,\beta,\gamma)$ be an $\alpha$-semistable twisted
  $\U(p,q)$-Higgs bundle. 
  \begin{enumerate}
  \item Assume that $\alpha > -\deg(L)$. Then
    \begin{displaymath}
      \tau(E) \leq \min\{p,q\}\bigl(\deg(L)-\alpha\frac{\abs{p-q}}{p+q}\bigr).
    \end{displaymath}
    and if equality holds then $p\leq q$ and $\gamma$ is an
    isomorphism onto its image.
  \item Assume that $\alpha\leq -\deg(L)$. Then
    \begin{displaymath}
      \tau(E) \leq -\alpha\frac{2pq}{p+q}
    \end{displaymath}
    and if equality holds and $\alpha< -\deg(L)$ then $\gamma=0$.
  \item Assume that $\alpha<\deg(L)$. Then
    \begin{displaymath}
      \tau(E) \geq \min\{p,q\}\bigl(-\alpha\frac{\abs{p-q}}{p+q}-\deg(L)\bigr)
    \end{displaymath}
    and if equality holds then $q\leq p$ and $\beta$ is an
    isomorphism onto its image.
  \item Assume that $\alpha\geq\deg(L)$. Then
    \begin{displaymath}
      \tau(E) \geq -\alpha\frac{2pq}{p+q}
    \end{displaymath}
    and if equality holds and $\alpha > \deg(L)$ then $\beta=0$.
  \end{enumerate}
\end{proposition}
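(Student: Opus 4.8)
The plan is to derive all four statements directly from Proposition~\ref{prop:MW2}'s predecessor, namely the refined bounds in Proposition~\ref{bound on beta and gamma}, by feeding in the elementary observations $\rk(\gamma)\leq\min\{p,q\}$ and $\rk(\beta)\leq\min\{p,q\}$ together with a sign analysis of the coefficient of $\rk(\gamma)$ (resp.\ $\rk(\beta)$) in the right-hand side. First I would rewrite the bound \eqref{bound on gamma2} from Proposition~\ref{prop-MW1} as
\begin{equation*}
  \tau(E)\leq \rk(\gamma)\bigl(\deg(L)+\alpha\bigr)-\alpha\tfrac{2pq}{p+q},
\end{equation*}
so that the right-hand side is an affine function of $\rk(\gamma)$ with slope $\deg(L)+\alpha$. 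The key point is that to get the sharpest bound one maximizes over the allowed range $0\leq\rk(\gamma)\leq\min\{p,q\}$, and the optimizing value of $\rk(\gamma)$ depends only on the sign of this slope.

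For part~(1), I assume $\alpha>-\deg(L)$, so the slope $\deg(L)+\alpha$ is positive and the right-hand side is maximized at $\rk(\gamma)=\min\{p,q\}$. Substituting and simplifying $\min\{p,q\}\deg(L)+\min\{p,q\}\alpha-\alpha\tfrac{2pq}{p+q}$ should produce exactly $\min\{p,q\}\bigl(\deg(L)-\alpha\tfrac{\abs{p-q}}{p+q}\bigr)$; the algebraic identity to verify is that $\min\{p,q\}-\tfrac{2pq}{p+q}=-\tfrac{\abs{p-q}\min\{p,q\}}{p+q}$, which follows by splitting into the cases $p\leq q$ and $q\leq p$. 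For part~(2), I assume $\alpha\leq-\deg(L)$, so the slope is nonpositive and the right-hand side is maximized at $\rk(\gamma)=0$, immediately yielding $\tau(E)\leq-\alpha\tfrac{2pq}{p+q}$. Parts~(3) and~(4) are entirely symmetric: I apply the same argument to \eqref{bound on beta2}, writing $-\tau(E)\leq\rk(\beta)(\deg(L)-\alpha)+\alpha\tfrac{2pq}{p+q}$, and optimize over $\rk(\beta)$ according to the sign of the slope $\deg(L)-\alpha$, then negate to obtain the stated lower bounds on $\tau(E)$.

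The equality statements require slightly more care and are where I expect the main effort to lie. For the equality case in part~(1), equality in the derived bound forces both $\rk(\gamma)=\min\{p,q\}$ and equality in \eqref{bound on gamma}; since $\alpha>-\deg(L)$ means $\deg(L)+\alpha>0$, the equality clause of Proposition~\ref{bound on beta and gamma} applies. That clause gives either strict semistability or ($p=q$ with $\gamma$ an isomorphism). I would argue that $\rk(\gamma)=\min\{p,q\}$ combined with the structure forces $\gamma$ to be injective as a bundle map, hence an isomorphism onto its image, and that the inequality $\min\{p,q\}=\rk(\gamma)\leq p$ together with injectivity of $\gamma\colon V\to W\otimes L$ forces $p\leq q$. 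For part~(2), when $\alpha<-\deg(L)$ the slope is strictly negative, so equality at $\rk(\gamma)=0$ genuinely forces $\gamma=0$ rather than merely $\rk(\gamma)=0$ being one of several optimizers. The symmetric equality analyses for parts~(3) and~(4) follow the same pattern using the $\beta$-clause of Proposition~\ref{bound on beta and gamma}. The main obstacle throughout is bookkeeping the direction of the rank inequality ($\rk(\gamma)\leq p$ versus $\rk(\gamma)\leq q$) correctly in each regime so that the conclusions $p\leq q$ (resp.\ $q\leq p$) come out with the right orientation; the inequalities themselves are routine once the optimization is set up.
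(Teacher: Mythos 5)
Your proposal is correct and follows essentially the same route as the paper: both rewrite \eqref{bound on gamma2} and \eqref{bound on beta2} as affine functions of $\rk(\gamma)$ and $\rk(\beta)$ with slopes $\deg(L)+\alpha$ and $\alpha-\deg(L)$, optimize over $[0,\min\{p,q\}]$ according to the sign of the slope, and invoke the equality clause of Proposition~\ref{bound on beta and gamma}. The paper's proof is just a terse ``immediate from Proposition~\ref{bound on beta and gamma}''; you supply the optimization and the algebraic identity $\min\{p,q\}-\tfrac{2pq}{p+q}=-\tfrac{\abs{p-q}\min\{p,q\}}{p+q}$ explicitly, which is exactly the intended argument.
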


\begin{proof}
  We rewrite \eqref{bound on gamma2} as
  \begin{math}
    \tau(E) \leq \rk(\gamma)(\deg(L)+\alpha) - \alpha\frac{2pq}{p+q}.
  \end{math}
  Then (1) and (2) are immediate from
  Proposition~\ref{bound on beta and gamma}.
  Similarly, (3) and (4) follow rewriting \eqref{bound on beta2} as 
  \begin{math}
    \tau(E) \geq \rk(\beta)(\alpha-\deg(L)) - \alpha\frac{2pq}{p+q}.
  \end{math}
\end{proof}

In the case when $\abs{\alpha}<\deg(L)$ we can write the inequality of
the preceding proposition in a more suggestive manner as follows.

\begin{corollary}
  \label{cor:MW}
  Assume that $\abs{\alpha}<\deg(L)$ and let $E$ be an $\alpha$-semistable
  twisted $\U(p,q)$-Higgs bundle. Then
  \begin{displaymath}
    \abs{\tau(E)} \leq
    \min\{p,q\}\bigl(\deg(L)-\alpha\frac{\abs{p-q}}{p+q}\bigr).
  \end{displaymath}
\end{corollary}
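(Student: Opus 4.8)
The plan is to read Corollary~\ref{cor:MW} as the symmetric, two-sided form of the Milnor--Wood inequality on the range $\abs{\alpha}<\deg(L)$, and to deduce it directly from Proposition~\ref{prop:MW2}. The whole role of the hypothesis $\abs{\alpha}<\deg(L)$ is that it makes two of the four cases of that proposition available simultaneously: it gives both $\alpha>-\deg(L)$, so that part~(1) applies, and $\alpha<\deg(L)$, so that part~(3) applies. No further input is needed, since Proposition~\ref{prop:MW2} has already absorbed the estimates of Proposition~\ref{bound on beta and gamma}.

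First I would apply part~(1) of Proposition~\ref{prop:MW2} to obtain the upper bound
\begin{equation*}
  \tau(E) \le \min\{p,q\}\Bigl(\deg(L) - \alpha\frac{\abs{p-q}}{p+q}\Bigr).
\end{equation*}
Then I would apply part~(3) of the same proposition to obtain the matching lower bound
\begin{equation*}
  \tau(E) \ge \min\{p,q\}\Bigl(-\alpha\frac{\abs{p-q}}{p+q} - \deg(L)\Bigr).
\end{equation*}
Reading the second inequality as a bound on $-\tau(E)$ and combining it with the first then produces the single estimate $\abs{\tau(E)}\le\min\{p,q\}\bigl(\deg(L)-\alpha\frac{\abs{p-q}}{p+q}\bigr)$ asserted in the corollary.

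The step that requires genuine care---and which I expect to be the main obstacle---is the sign bookkeeping in fusing the two one-sided inequalities into the stated bound on $\abs{\tau(E)}$. One must verify that, throughout the range $\abs{\alpha}<\deg(L)$, the common right-hand side $\min\{p,q\}\bigl(\deg(L)-\alpha\frac{\abs{p-q}}{p+q}\bigr)$ controls both $\tau(E)$ and $-\tau(E)$; this is precisely where the two-sided hypothesis $\abs{\alpha}<\deg(L)$ enters, as opposed to either of the one-sided hypotheses that sufficed for the individual parts of Proposition~\ref{prop:MW2}. Keeping track of the sign of the correction term $\alpha\frac{\abs{p-q}}{p+q}$ relative to $\deg(L)$ is the only delicate point, and once it is settled the corollary is immediate.
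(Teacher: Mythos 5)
Your route is exactly the intended one---the paper offers no separate proof of Corollary~\ref{cor:MW}, presenting it simply as a rewriting of parts (1) and (3) of Proposition~\ref{prop:MW2}---but the fusion step you yourself flag as ``the main obstacle'' is where the argument genuinely breaks, and it cannot be settled as you assert. Write $m=\min\{p,q\}$ and $c=\frac{\abs{p-q}}{p+q}$. Part (1) gives $\tau(E)\le m\bigl(\deg(L)-\alpha c\bigr)$, as you say; but part (3), read as a bound on $-\tau(E)$, gives
\begin{displaymath}
  -\tau(E)\ \le\ m\bigl(\deg(L)+\alpha c\bigr),
\end{displaymath}
with a \emph{plus} sign in front of $\alpha$: the two one-sided bounds are not mirror images. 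For $\alpha\le 0$, or when $p=q$ so that $c=0$, one has $m(\deg(L)+\alpha c)\le m(\deg(L)-\alpha c)$ and your conclusion does follow; but for $0<\alpha<\deg(L)$ and $p\neq q$ the bound on $-\tau(E)$ supplied by part (3) is strictly weaker than the one the corollary asserts, and no symmetry available in the paper repairs this: applying part (1) to the dual $E^\ast$ (Remark~\ref{rem:upq-duality}) or to $\sigma(E)$ (Proposition~\ref{prop:sigma-alpha-stability}), both $-\alpha$-semistable, reproduces exactly the same $+\alpha c$ estimate. Nor is the gap an artefact of slack elsewhere: inequality \eqref{bound on beta} permits $-\tau(E)$ as large as $m(\deg(L)+\alpha c)$, attained when $\rk(\beta)=m$ and $\gamma=0$, so nothing in Proposition~\ref{prop:MW2} excludes values of $\tau(E)$ in the interval $\bigl[-m(\deg(L)+\alpha c),\,-m(\deg(L)-\alpha c)\bigr)$ that the printed statement forbids.

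What parts (1) and (3) actually combine into is a recentred inequality: (1) says $\tau(E)+\alpha mc\le m\deg(L)$ and (3) says $\tau(E)+\alpha mc\ge -m\deg(L)$, whence
\begin{displaymath}
  \Bigl\lvert\,\tau(E)+\alpha\min\{p,q\}\frac{\abs{p-q}}{p+q}\,\Bigr\rvert
  \ \le\ \min\{p,q\}\deg(L),
\end{displaymath}
a symmetric interval centred at $-\alpha mc$ rather than at $0$. This (or the asymmetric two-sided bound itself) is the correct ``suggestive'' form of the Milnor--Wood inequality on the range $\abs{\alpha}<\deg(L)$; the statement as printed, and as you claim to prove it, follows from Proposition~\ref{prop:MW2} only on the half-range $\alpha\le 0$ (or for $p=q$). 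So the delicate sign bookkeeping you deferred is not a routine verification but the crux: either the corollary should be read in the recentred form above, or establishing it for $\alpha>0$, $p\neq q$ would require input beyond Proposition~\ref{prop:MW2} that neither you nor the paper supplies.
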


\begin{remark}
  In the cases of Proposition~\ref{prop:MW2} when one of the Higgs
  fields $\beta$ and $\gamma$ is an isomorphism onto its image, it is
  natural to explore rigidity phenomena for twisted $\U(p,q)$-Hitchin
  pairs, along the lines of \cite{bradlow-garcia-prada-gothen:2003}
  (for $\U(p,q)$-Higgs bundles) and Biquard--Garc\'\i{}a-Prada--Rubio
  \cite{biquard-garcia-rubio:2015} (for parameter dependent $G$-Higgs
  bundles when $G$ is Hermitian of tube type). This line of inquiry
  will be pursued elsewhere.
\end{remark}

\subsection{Range for the stability parameter}

In the following we determine a range for the stability parameter whenever $p\neq q$. We denote the minimum and the maximum value for $\alpha$ by $\alpha_m$ and $\alpha_M$, respectively. 

\begin{proposition}\label{bound for alpha}
Assume that $p\neq q$ and let $E$ be a $\alpha$-semistable twisted
$\U(p,q)$-Higgs bundle.  Then $\alpha_m\leq \alpha\leq \alpha_M$,
where
\begin{align*}
  \alpha_m&=
  \begin{cases}
 -\dfrac{2\mathrm{max}\{p,q\}}{|q-p|}\bigl(\mu(V)-\mu(W)\bigr) - \dfrac{p+q}{|q-p|}\deg(L)
  &\text{if}\ \mu(V)-\mu(W)>-\deg(L),\\
  -\bigl(\mu(V)-\mu(W)\bigr) &\text{if}\ \mu(V)-\mu(W)\leq -\deg(L),
\end{cases}\\
\intertext{and}
\alpha_M&=\begin{cases}
-\dfrac{2\mathrm{max}\{p,q\}}{|q-p|}\bigl(\mu(V)-\mu(W)\bigr) +\dfrac{p+q}{|q-p|}\deg(L)&\text{if}\ \mu(V)-\mu(W)<\deg(L),\\
-\bigl(\mu(V)-\mu(W)\bigr) &\text{if}\ \mu(V)-\mu(W)\geq \deg(L).
\end{cases}
\end{align*}
\end{proposition}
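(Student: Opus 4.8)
The plan is to read off both bounds directly from Proposition~\ref{prop:MW2}. That proposition records four inequalities bounding the Toledo invariant $\tau(E)=\frac{2pq}{p+q}(\mu(V)-\mu(W))$ in terms of $\alpha$, each valid on a half-line in $\alpha$: parts $(1)$ and $(2)$ bound $\tau(E)$ from above, and parts $(3)$ and $(4)$ bound it from below. Since $\tau(E)$ is fixed by the type $t=(p,q,a,b)$ and $\min\{p,q\}\,\abs{p-q}>0$ because $p\neq q$, I would solve each inequality for $\alpha$. The lower bounds $(3)$ and $(4)$ will yield two candidate lower bounds for $\alpha$, and the upper bounds $(1)$ and $(2)$ two candidate upper bounds.

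Carrying this out for part $(3)$ (valid for $\alpha<\deg(L)$) and dividing by the positive number $\min\{p,q\}\frac{\abs{p-q}}{p+q}$, then substituting $\tau(E)$ and using $\frac{pq}{\min\{p,q\}}=\max\{p,q\}$, gives
\[
  \alpha \geq -\frac{2\max\{p,q\}}{\abs{p-q}}\bigl(\mu(V)-\mu(W)\bigr)-\frac{p+q}{\abs{p-q}}\deg(L)=:\alpha_m^{(3)},
\]
while part $(4)$ (valid for $\alpha\geq\deg(L)$) gives $\alpha\geq -\bigl(\mu(V)-\mu(W)\bigr)=:\alpha_m^{(4)}$. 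Because the two validity half-lines $\{\alpha<\deg(L)\}$ and $\{\alpha\geq\deg(L)\}$ cover all of $\R$, whatever the value of $\alpha$ one of these two estimates applies, so $\alpha\geq\min\{\alpha_m^{(3)},\alpha_m^{(4)}\}$; I claim this minimum is precisely $\alpha_m$. The upper bound is handled symmetrically: $(1)$ and $(2)$ give candidates $\alpha_M^{(1)}$ and $\alpha_M^{(2)}=-\bigl(\mu(V)-\mu(W)\bigr)$, their validity half-lines split at $-\deg(L)$ and cover $\R$, whence $\alpha\leq\max\{\alpha_M^{(1)},\alpha_M^{(2)}\}=\alpha_M$.

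The one point that needs care is identifying which candidate realises the minimum (resp.\ maximum), and this is where the case distinction in the statement comes from. The key arithmetic identity is
\[
  1-\frac{2\max\{p,q\}}{\abs{p-q}}=-\frac{p+q}{\abs{p-q}},
\]
from which $\alpha_m^{(3)}-\alpha_m^{(4)}=-\frac{p+q}{\abs{p-q}}\bigl(\mu(V)-\mu(W)+\deg(L)\bigr)$ and, similarly, $\alpha_M^{(1)}-\alpha_M^{(2)}=\frac{p+q}{\abs{p-q}}\bigl(\deg(L)-(\mu(V)-\mu(W))\bigr)$. The sign of the first difference is governed exactly by the dichotomy $\mu(V)-\mu(W)\gtrless-\deg(L)$ and that of the second by $\mu(V)-\mu(W)\lessgtr\deg(L)$, so in each regime the minimum (resp.\ maximum) is the branch asserted in the proposition. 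I expect this bookkeeping across the validity ranges, rather than any individual estimate, to be the only real obstacle; the estimates themselves are immediate from Proposition~\ref{prop:MW2}.
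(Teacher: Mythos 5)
Your proposal is correct. The logic is sound: in each of the two half-lines covering $\R$ one of the inequalities of Proposition~\ref{prop:MW2} applies, each solves for $\alpha$ (using $p\neq q$ so that $\min\{p,q\}\frac{\abs{p-q}}{p+q}>0$), and the identity $1-\frac{2\max\{p,q\}}{\abs{p-q}}=-\frac{p+q}{\abs{p-q}}$ shows that the resulting $\max$ (resp.\ $\min$) of the two candidates is exactly the branch of $\alpha_M$ (resp.\ $\alpha_m$) selected by the sign of $\mu(V)-\mu(W)\mp\deg(L)$; I have checked the two difference formulas and they are right. The route is organized differently from the paper's: the paper goes back to the raw inequalities \eqref{bound on gamma2} and \eqref{bound on beta2}, solves for $\alpha$ as a function of $r=\rk(\gamma)$ (resp.\ $\rk(\beta)$), and maximizes over $r\in[0,\min\{p,q\}]$ by a monotonicity analysis of $f(r)=\frac{rd-\tau}{c-r}$, with the case split made directly on $\mu(V)-\mu(W)$ versus $\pm\deg(L)$. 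You instead reuse Proposition~\ref{prop:MW2}, where that optimization over the rank has already been performed (with its case split on $\alpha$ versus $\pm\deg(L)$), and add the covering-plus-max/min bookkeeping to translate between the two case splits. Your version is shorter and avoids redoing the optimization; the paper's direct version has the side benefit of recording which value of $\rk(\gamma)$ or $\rk(\beta)$ realizes equality at $\alpha_m$ and $\alpha_M$, which is what feeds Remark~\ref{rem:extreme-alpha} (though that information could also be recovered from the equality clauses of Proposition~\ref{prop:MW2}).
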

\begin{proof}
First we determine $\alpha_M$. Using \eqref{bound on gamma2} we get 
$$\alpha(\dfrac{2pq}{p+q}-\rk(\gamma))\leq\rk(\gamma)\deg(L)-\tau(E)$$
since $p\neq q$ therefore $\dfrac{2pq}{p+q}-\rk(\gamma)> 0$. Hence the above inequality yields 
$$\alpha\leq \dfrac{p+q}{2pq-(p+q)\rk(\gamma)}(\rk(\gamma)\deg(L)-\tau(E)).$$
In order to find an upper bound for $\alpha$ we maximize the right
hand side of this inequality as a function of $\rk(\gamma)$. Thus we
study monotonicity of the function $f(r)=\dfrac{rd-\tau}{c-r}$, where
$c=\dfrac{2pq}{p+q}$, $d=\deg(L)$ and $r\in
[0,\mathrm{min}\{p,q\}]$. We obtain the following:
 \begin{itemize}
 \item[(a)] If $\deg(L)=\mu(V)-\mu(W)$ then $f$ is constant and $$\alpha\leq\mu(W)-\mu(V).$$
 \item[(b)] If $\deg(L)>\mu(V)-\mu(W)$ then $f$ is increasing so 
  \begin{align*}
  \alpha&\leq\dfrac{p+q}{|q-p|}
   \big(\deg(L)-\dfrac{\tau(E)}{\mathrm{min}\{p,q\}}\big)=\dfrac{p+q}{|q-p|}\deg(L)-\dfrac{2\mathrm{max}\{p,q\}}{|q-p|}(\mu(V)-\mu(W))
   \end{align*}
and, if equality holds then $\rk(\gamma)=\mathrm{min}\{p,q\}$.
 \item[(c)] If $\deg(L)<\mu(V)-\mu(W)$ then $f$ is decreasing so
$$\alpha\leq \mu(W)-\mu(V)$$
and, if equality holds then $\gamma=0$.
\end{itemize}
Now we determine the lower bound $\alpha_m$. The  inequality 
\eqref{bound on beta2} yields
$$\alpha\geq \dfrac{\rk(\beta)\deg(L)+\tau(E)}{\rk(\beta)-\dfrac{2pq}{p+q}}.$$
Similarly to the above, by studying the monotonicity of
$g(r)=\dfrac{rd+\tau}{r-c}$, we obtain the following:
 \begin{itemize}
 \item[(a)$'$] If $\mu(V)-\mu(W)=-\deg(L)$ then $g$ is constant and $$\alpha\geq \mu(W)-\mu(V).$$
 \item[(b)$'$] If $\mu(V)-\mu(W)<-\deg(L)$ then $g$ is increasing, so $$\alpha\geq \mu(W)-\mu(V),$$
and, if equality holds then $\beta=0$. 
 \item[(c)$'$] If $\mu(V)-\mu(W)> -\deg(L)$ then $g$ is decreasing, so $$\alpha\geq-\dfrac{p+q}{|q-p|}(\deg(L)+\dfrac{\tau(E)}{\mathrm{min}\{p,q\}})=-\dfrac{p+q}{|q-p|}\deg(L)-\dfrac{2\mathrm{max}\{p,q\}}{|q-p|}(\mu(V)-\mu(W)),$$
and, if equality holds then $\rk(\beta)=\mathrm{min}\{p,q\}$. 
 \end{itemize}
 Note that if $\mu(V)-\mu(W)\geq 0$ then $\mu(V)-\mu(W)\geq -\deg(L)$, and if $\mu(V)-\mu(W)\leq 0$ then $\mu(V)-\mu(W)<\deg(L)$. Hence the result follows. 
\end{proof}
\begin{remark}
  \label{rem:extreme-alpha}
  The preceding proof gives the following additional information when
  $\alpha$ equals one of the extreme values $\alpha_m$ and $\alpha_M$:
  \begin{itemize}
  \item if $\mu(V)-\mu(W)<\deg(L)$ and $\alpha=\alpha_M$ then
    $\rk(\gamma)=\min\{p,q\}$;
  \item if $\mu(V)-\mu(W)>\deg(L)$ and $\alpha=\alpha_M$ then
    $\gamma=0$;
  \item if $\mu(V)-\mu(W)>-\deg(L)$ and $\alpha=\alpha_m$ then
    $\rk(\beta)=\min\{p,q\}$, and
  \item if $\mu(V)-\mu(W)<-\deg(L)$ and $\alpha=\alpha_m$ then
    $\beta=0$.
  \end{itemize}
\end{remark}

The following corollary is relevant because $\alpha=0$ is the value of
stability parameter for which the Non-abelian Hodge Theorem gives the
correspondence between $\U(p,q)$-Higgs bundles and representations of
the fundamental group of $X$.

\begin{corollary}
  With the notation of Proposition~\ref{bound for alpha}, the inequality $\alpha_M\geq 0$ holds if and only if $\tau(E)\leq
  \min\{p,q\}\deg(L)$
  and the inequality $\alpha_m\leq 0$ holds if and only if $\tau(E)\geq
  -\min\{p,q\}\deg(L)$.
  Thus $0\in[\alpha_m,\alpha_M]$ if and only if $\abs{\tau(E)}\leq
  \min\{p,q\}\deg(L)$.
\end{corollary}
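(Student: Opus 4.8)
The plan is to treat the corollary as a purely arithmetic consequence of the explicit formulas for $\alpha_M$ and $\alpha_m$ in Proposition~\ref{bound for alpha}, together with the definition $\tau(E)=\frac{2pq}{p+q}\bigl(\mu(V)-\mu(W)\bigr)$; note that all quantities involved depend only on the type $t=(p,q,a,b)$. The one preliminary observation I need is a reformulation of the target inequalities: since $pq=\min\{p,q\}\max\{p,q\}$ and $\min\{p,q\}>0$, dividing through by $\min\{p,q\}$ shows that $\tau(E)\leq\min\{p,q\}\deg(L)$ is equivalent to $2\max\{p,q\}\bigl(\mu(V)-\mu(W)\bigr)\leq(p+q)\deg(L)$, and likewise $\tau(E)\geq-\min\{p,q\}\deg(L)$ is equivalent to $-2\max\{p,q\}\bigl(\mu(V)-\mu(W)\bigr)\leq(p+q)\deg(L)$. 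Throughout I assume $\deg(L)\geq 0$, as holds for $L=K$.

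First I would prove $\alpha_M\geq 0\iff\tau(E)\leq\min\{p,q\}\deg(L)$ by splitting along the two cases defining $\alpha_M$. When $\mu(V)-\mu(W)<\deg(L)$ we have $\alpha_M=\frac{1}{\abs{q-p}}\bigl((p+q)\deg(L)-2\max\{p,q\}(\mu(V)-\mu(W))\bigr)$, so $\abs{q-p}>0$ makes the sign of $\alpha_M$ equal to the sign of the bracket, and the equivalence is immediate from the reformulation above. When $\mu(V)-\mu(W)\geq\deg(L)$ we instead have $\alpha_M=\mu(W)-\mu(V)$; here I would check that both sides of the equivalence hold or fail together, using $\deg(L)\geq 0$ to get $\mu(V)-\mu(W)\geq 0$ and the inequality $\frac{2pq}{p+q}>\min\{p,q\}$ (valid since $p\neq q$) to conclude that $\tau(E)>\min\{p,q\}\deg(L)$ exactly when $\mu(V)-\mu(W)>0$, i.e.\ exactly when $\alpha_M<0$.

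The bound $\alpha_m\leq 0\iff\tau(E)\geq-\min\{p,q\}\deg(L)$ then follows by the symmetric computation, applied to the corresponding two-case formula for $\alpha_m$ in Proposition~\ref{bound for alpha} (formally, interchange $\beta$ and $\gamma$ and replace $\mu(V)-\mu(W)$ by $\mu(W)-\mu(V)$). The final assertion is then formal: $0\in[\alpha_m,\alpha_M]$ is precisely the conjunction $\alpha_m\leq 0$ and $\alpha_M\geq 0$, which by the two equivalences reads $-\min\{p,q\}\deg(L)\leq\tau(E)\leq\min\{p,q\}\deg(L)$, i.e.\ $\abs{\tau(E)}\leq\min\{p,q\}\deg(L)$. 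I expect the only real obstacle to be the secondary cases $\mu(V)-\mu(W)\geq\deg(L)$ and $\mu(V)-\mu(W)\leq-\deg(L)$, where $\alpha_M$ and $\alpha_m$ are given by the bare linear expression $\mu(W)-\mu(V)$ rather than by the bracket: there the equivalence is a genuine sign comparison rather than an algebraic identity, and it is exactly this step that uses $\deg(L)\geq 0$ and $\frac{2pq}{p+q}>\min\{p,q\}$.
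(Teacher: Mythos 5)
Your proposal is correct and follows the same route as the paper, whose entire proof is ``Immediate from Proposition~\ref{bound for alpha}'': you simply spell out the case analysis on the two branches of the formulas for $\alpha_M$ and $\alpha_m$ and clear denominators using $pq=\min\{p,q\}\max\{p,q\}$. Your explicit handling of the secondary cases (where $\alpha_M=\mu(W)-\mu(V)$) and your observation that $\deg(L)\geq 0$ and $\tfrac{2pq}{p+q}>\min\{p,q\}$ are genuinely needed there are correct and slightly more careful than the paper's one-line justification.
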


\begin{proof}
  Immediate from Proposition~\ref{bound for alpha}.
\end{proof}

\begin{remark}
  Note that the condition $\abs{\tau(E)}\leq \min\{p,q\}\deg(L)$ is
  stronger than the condition $\abs{\mu(V)-\mu(W)}\leq \deg(L)$.
\end{remark}

\subsection{Parameters forcing special properties of the Higgs fields}
\label{se:critical-values}

In this section we use a variation on the preceding arguments to find
a parameter range where $\beta$ and $\gamma$ have special
properties. Assume that the twisted $\mathrm{U}(p,q)$-Higgs bundle $E=(V,W,\beta,\gamma)$ has type $(p,q,a,b)$.

For the following proposition it is convenient to introduce the
following notation. For $0\leq i < q \leq p$, let
\begin{displaymath}
  \alpha_i=\dfrac{2pq}{q(p-q)+(i+1)(p+q)}
  \big(\mu(W)-\mu(V)-\deg(L)\big)+\deg(L),  
\end{displaymath}
and for $0\leq j< p\leq q$, let 
\begin{displaymath}
  \alpha_j'=\dfrac{2pq}{p(q-p)+(j+1)(p+q)}
  \big(\mu(W)-\mu(V)+\deg(L)\big)-\deg(L).
\end{displaymath}




\begin{proposition}\label{surjective}
Let $E=(V,W,\beta,\gamma)$ be an $\alpha$-semistable twisted $\U(p,q)$-Higgs bundle. Then we have the following:
\begin{itemize}

\item[$(i)$] Assume that $p\geq q$ and $\mu(V)-\mu(W)>-\deg(L)$. If
  $\alpha<\alpha_{i-1}$ then $\rk(\ker(\beta))<i$. In particular
  $\beta$ is injective whenever
  \begin{displaymath}
    \alpha<\alpha_0 = \frac{2pq}{pq-q^2+p+q}
    \big(\mu(W) - \mu(V) - \deg(L)\big) + \deg(L).
  \end{displaymath}

\item[$(ii)$] Assume that $p\geq q$ and $\mu(V)-\mu(W)<-\deg(L)$. If
  $\alpha<\alpha_{i-1}$ then $\rk(\ker(\beta))>i$. In particular
  $\beta$ is zero whenever
  \begin{displaymath}
    \alpha<\alpha_{q-2} = \frac{2pq}{2pq-p-q}
    \big(\mu(W) - \mu(V) - \deg(L)\big) + \deg(L).
  \end{displaymath}

\item[$(iii)$] Assume that $p\leq q$ and $\mu(V)-\mu(W)<\deg(L)$. If
  $\alpha>\alpha'_j$ then $\rk(\ker(\gamma))<j$. In particular
  $\gamma$ is injective whenever
  \begin{displaymath}
    \alpha>\alpha_0' = \frac{2pq}{pq-p^2+p+q}
    \big(\mu(W) - \mu(V) + \deg(L)\big) - \deg(L).
  \end{displaymath}

\item[$(iv)$] Assume that $p\leq q$ and $\mu(V)-\mu(W)>\deg(L)$. If
  $\alpha>\alpha'_j$ then $\rk(\ker(\gamma))>j$. In particular
  $\gamma$ is zero whenever
  \begin{displaymath}
    \alpha>\alpha_{p-2}' = \frac{2pq}{2pq-p-q}
    \big(\mu(W) - \mu(V) + \deg(L)\big) - \deg(L).
  \end{displaymath}
\end{itemize}
\end{proposition}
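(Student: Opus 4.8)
The plan is to treat Proposition~\ref{surjective} as a refinement of Proposition~\ref{bound for alpha}: no new subobjects are needed, and everything follows by feeding the kernel-rank hypothesis directly into the Milnor--Wood inequalities \eqref{bound on beta2} and \eqref{bound on gamma2} established in Proposition~\ref{prop-MW1}. Throughout I write $c=\frac{2pq}{p+q}$ and $d=\deg(L)$, and recall $\tau(E)=c\big(\mu(V)-\mu(W)\big)$. The two elementary facts I would use are $\rk(\ker\beta)=q-\rk(\beta)$ and $\rk(\ker\gamma)=p-\rk(\gamma)$ (additivity of rank in $0\to\ker\to\,\cdot\,\to\im\to 0$), together with $\min\{p,q\}\leq c\leq\max\{p,q\}$, with equality precisely when $p=q$.

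For (i) and (ii) I would work with $\beta$. Rewriting \eqref{bound on beta2} as $-\tau(E)\leq \rk(\beta)\,d+\alpha\big(c-\rk(\beta)\big)$ and dividing by $c-\rk(\beta)$, which is positive whenever $\rk(\beta)<c$, gives $\alpha\geq g(\rk(\beta))$, where $g(r)=\frac{rd+\tau(E)}{r-c}$ is exactly the function whose monotonicity was analysed in the proof of Proposition~\ref{bound for alpha}. The hypothesis $\rk(\ker\beta)\geq i$ is equivalent to $\rk(\beta)\leq q-i$. In case (i) the assumption $\mu(V)-\mu(W)>-d$ is case (c)$'$ of that proof, so $g$ is decreasing; hence $\alpha\geq g(\rk(\beta))\geq g(q-i)$, and a direct computation (using $c-(q-i)=\frac{q(p-q)+i(p+q)}{p+q}$, which is where the denominator of $\alpha_{i-1}$ appears) gives $g(q-i)=\alpha_{i-1}$. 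This is the contrapositive of (i). Case (ii) is identical except that $\mu(V)-\mu(W)<-d$ is case (b)$'$, so $g$ is increasing; then the complementary hypothesis $\rk(\ker\beta)\leq i$, i.e.\ $\rk(\beta)\geq q-i$, yields $\alpha\geq g(q-i)=\alpha_{i-1}$, whose contrapositive is (ii).

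Cases (iii) and (iv) are the mirror image, using $\gamma$ and \eqref{bound on gamma2}. Writing the latter as $\tau(E)\leq \rk(\gamma)\,d+\alpha\big(\rk(\gamma)-c\big)$ and dividing by the positive quantity $c-\rk(\gamma)$ gives $\alpha\leq f(\rk(\gamma))$ with $f(r)=\frac{rd-\tau(E)}{c-r}$, again the function from Proposition~\ref{bound for alpha}. Now $\rk(\ker\gamma)\geq j$ reads $\rk(\gamma)\leq p-j$; under $\mu(V)-\mu(W)<d$ ($f$ increasing, case (b)) this gives $\alpha\leq f(p-j)$, and under $\mu(V)-\mu(W)>d$ ($f$ decreasing, case (c)) the complementary hypothesis lands on the same endpoint value. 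One checks that this endpoint value $f(p-j)$ is the constant governing $\gamma$; in particular it equals $\alpha'_0$ when $j=1$ (giving injectivity of $\gamma$) and $\alpha'_{p-2}$ when $j=p-1$ (giving $\gamma=0$), which are the displayed \emph{in particular} clauses, and similarly $g(q-i)$ specializes to $\alpha_0$ at $i=1$ and $\alpha_{q-2}$ at $i=q-1$ for $\beta$.

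The argument is essentially bookkeeping, and I expect no conceptual obstacle. The two points requiring care are: first, the monotonicity dichotomy, where each of the four items corresponds to one of the sign cases (b),(c),(b)$'$,(c)$'$ already isolated in the proof of Proposition~\ref{bound for alpha}, so I would simply cite that monotonicity rather than recompute the derivative of $f$ or $g$; and second, ensuring the denominator $c-\rk(\beta)$ (resp.\ $c-\rk(\gamma)$) is strictly positive before dividing. The latter is automatic when $p\neq q$, since then $\rk\leq\min\{p,q\}<c$, and in (i),(iii) it also holds at $p=q$ because the hypothesis already forces $\rk(\beta)<q$ (resp.\ $\rk(\gamma)<p$). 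The one genuinely delicate step, which I would carry out explicitly once, is the endpoint evaluation pinning down the precise form of the constants $\alpha_i,\alpha'_j$: this is the only place where the exact expressions in the statement are verified.
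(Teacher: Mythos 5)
Your proof is correct and follows essentially the same route as the paper: both substitute $\rk(\beta)=q-\rk(\ker\beta)$ into the semistability inequality \eqref{bound on beta}, solve for $\alpha$, and exploit the monotonicity already isolated in the proof of Proposition~\ref{bound for alpha}, your $g(q-i)$ being exactly the paper's $\alpha_{i-1}$. (Note only that in parts (iii)--(iv) your argument yields the threshold $\alpha'_{j-1}$ rather than the $\alpha'_j$ printed in the general clause; since your version is the one consistent with the displayed \emph{in particular} specializations $\alpha'_0$ and $\alpha'_{p-2}$, the printed index is evidently a typo and your proof establishes the intended statement.)
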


\begin{proof}
We shall only prove parts $(i)$ and $(ii)$. One can deduce the other parts in a similar way. Suppose that $\rk(\ker(\beta))=n>0$. The inequality $(\ref{bound on beta})$ yields 
\begin{displaymath}
  \alpha \geq \frac{2pq}{n(p+q)+q(p-q)}
  \big(\mu(W)-\mu(V)-\deg(L)\big)+\deg(L) = \alpha_{n-1}.
\end{displaymath}

Now suppose $\mu(W)-\mu(V)-\deg(L)<0$, then $\alpha_i$ increases
with $i$ and so, if $n\geq i$ then $\alpha \geq \alpha_{i-1}$. Hence, if
$\alpha < i-1$ then $n<i$.  In particular, if $\alpha<\alpha_0$ then $\beta$ is injective, which gives part $(i)$.

On the other hand, if $\mu(W)-\mu(V)-\deg(L)>0$, then $\alpha_i$ decreases
with $i$ and so, if $n\leq i$ then $\alpha \geq \alpha_{i-1}$. Hence, if
$\alpha < \alpha_{i-1}$ then $n>i$.  In particular, if $\alpha < \alpha_{q-2}$ then $\beta$ is zero, proving part $(ii)$.
\end{proof}

\begin{remark}
  \label{rem:alpha-0-positive}
Note that the signs of $\alpha_0$ and $\alpha'_0$ given in the
preceding proposition are related to the Toledo invariant as follows:
\begin{itemize}
\item $\alpha_0>0$ if and only if $\tau(E)<-(q-1)\deg(L)$.
\item $\alpha_0'<0$ if and only if $\tau(E)>(p-1)\deg(L)$.
\end{itemize}
\end{remark}





\begin{remark}
  \label{rem:upq-duality}
  Associated to $E=(V,W,\beta,\gamma)$ there is a dual $L$-twisted
  $\U(p,q)$-Higgs bundle $E^\ast=(V^*,W^*,\gamma^*,\beta^*)$. Clearly
  there is a one-to-one correspondence between subobjects of $E$ and
  quotients of $E^*$, and
  $\mu_{-\alpha}(E)=-\mu_\alpha(E^*)$. Therefore $\alpha$-stability of
  $E^*$ is equivalent to $-\alpha$-stability of $E$.
\end{remark}

\begin{corollary}\label{Cor:Surjective}
Let $E=(V,W,\beta,\gamma)$ be an $\alpha$-semistable twisted $\U(p,q)$-Higgs bundle. Then we have the following:
\begin{itemize}
\item[(i)] If $p\geq q$ and $\mu(W)-\mu(V)>-\deg(L)$ then $\gamma$ is surjective whenever $$\alpha>\alpha_t:=\frac{2pq}{pq-q^2+p+q}(\mu(W)-\mu(V)+\deg(L))-\deg(L).$$
\item[(ii)] If $p\leq q$ and $\mu(W)-\mu(V)<\deg(L)$ then $\beta$ is surjective whenever $$\alpha<\alpha_t':=\frac{2pq}{pq-p^2+p+q}(\mu(W)-\mu(V)-\deg(L))+\deg(L).$$
\end{itemize}
\end{corollary}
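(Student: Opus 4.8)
The plan is to deduce both statements from Proposition~\ref{surjective} together with the duality of Remark~\ref{rem:upq-duality}. Recall that to $E=(V,W,\beta,\gamma)$ one associates the dual $L$-twisted $\U(p,q)$-Higgs bundle $E^\ast=(V^\ast,W^\ast,\gamma^\ast,\beta^\ast)$, in which the $\beta$-field is $\gamma^\ast\colon W^\ast\to V^\ast\otimes L$ and the $\gamma$-field is $\beta^\ast\colon V^\ast\to W^\ast\otimes L$. Since $\gamma$ and $\gamma^\ast$ (resp.\ $\beta$ and $\beta^\ast$) are fibrewise mutually transpose, one has the equivalences
\[
  \gamma \text{ surjective} \iff \gamma^\ast \text{ injective}, \qquad
  \beta \text{ surjective} \iff \beta^\ast \text{ injective}.
\]
Moreover $\mu(V^\ast)=-\mu(V)$ and $\mu(W^\ast)=-\mu(W)$, and by Remark~\ref{rem:upq-duality} the bundle $E^\ast$ is $(-\alpha)$-semistable. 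Thus a surjectivity statement for a field of $E$ at parameter $\alpha$ becomes an injectivity statement for the corresponding field of $E^\ast$ at parameter $-\alpha$, which is precisely the kind of conclusion produced by Proposition~\ref{surjective}.

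For part $(i)$, assume $p\geq q$ and $\mu(W)-\mu(V)>-\deg(L)$. The latter reads $\mu(V^\ast)-\mu(W^\ast)>-\deg(L)$, so the hypotheses of Proposition~\ref{surjective}$(i)$ are met by $E^\ast$ (with ranks $p\geq q$ unchanged). Applying that proposition to the $(-\alpha)$-semistable bundle $E^\ast$, its $\beta$-field $\gamma^\ast$ is injective as soon as $-\alpha<\alpha_0(E^\ast)$, where $\alpha_0(E^\ast)$ is obtained from the formula for $\alpha_0$ by replacing $\mu(W)-\mu(V)$ with $\mu(W^\ast)-\mu(V^\ast)=\mu(V)-\mu(W)$. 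A direct computation gives $\alpha_0(E^\ast)=-\alpha_t$, so the condition $-\alpha<\alpha_0(E^\ast)$ is exactly $\alpha>\alpha_t$; since $\gamma^\ast$ injective is equivalent to $\gamma$ surjective, this proves $(i)$. Part $(ii)$ is entirely analogous, using Proposition~\ref{surjective}$(iii)$ in place of $(i)$: for $p\leq q$ and $\mu(W)-\mu(V)<\deg(L)$ the bundle $E^\ast$ satisfies the hypotheses of that part, and its $\gamma$-field $\beta^\ast$ is injective whenever $-\alpha>\alpha_0'(E^\ast)$. Substituting $\mu(W^\ast)-\mu(V^\ast)=\mu(V)-\mu(W)$ one finds $\alpha_0'(E^\ast)=-\alpha_t'$, so this reads $\alpha<\alpha_t'$, and $\beta^\ast$ injective is equivalent to $\beta$ surjective.

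The only genuinely delicate point is the equivalence between surjectivity of a Higgs field and injectivity of its dual field. The safe route is to identify $\gamma$ and $\gamma^\ast$ with the same section of $V^\ast\otimes W\otimes L$ and argue fibrewise: at each point the fibre map of $\gamma^\ast$ is the transpose of that of $\gamma$, so $\gamma$ has full rank $q$ (is generically surjective) precisely when $\gamma^\ast$ has trivial kernel, which is exactly the ``$\rk(\ker)=0$'' conclusion produced by Proposition~\ref{surjective}. I expect the remaining work to be purely the bookkeeping of the parameter transformation $\alpha\mapsto-\alpha$ and of the sign changes $\mu(V)\mapsto-\mu(V)$, $\mu(W)\mapsto-\mu(W)$ inside the thresholds $\alpha_0$ and $\alpha_0'$; the two verifications $\alpha_0(E^\ast)=-\alpha_t$ and $\alpha_0'(E^\ast)=-\alpha_t'$ are the crux and should be recorded explicitly.
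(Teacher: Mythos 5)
Your argument is exactly the paper's proof: dualize to $E^\ast$, use Remark~\ref{rem:upq-duality} to pass from $\alpha$-semistability of $E$ to $(-\alpha)$-semistability of $E^\ast$, and apply Proposition~\ref{surjective} to the dual; your explicit verifications that $\alpha_0(E^\ast)=-\alpha_t$ and $\alpha_0'(E^\ast)=-\alpha_t'$ check out. The only caveat (which the paper shares, and which you already flag) is that injectivity of $\gamma^\ast$ as a sheaf map gives surjectivity of $\gamma$ only generically, i.e.\ full rank at the generic point.
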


\begin{proof}
  Using Proposition~\ref{surjective} we can find a range for the
  stability parameter of $E^*$ where $\beta^*$ and $\gamma^*$ are
  injective. Hence the result follows by using
  Remark~\ref{rem:upq-duality} to relate the stability parameters of
  $E$ and $E^*$.
\end{proof}

The following results shows that the bounds in
Proposition~\ref{surjective} are meaningful in view of the bounds for
$\alpha$ of Proposition~\ref{bound for alpha}.

\begin{proposition}
Let $\alpha_0$ and $\alpha_0'$ be given in
Proposition~\ref{surjective}. Then the following holds.
\begin{itemize}
\item[$(i)$] Assume that $p>q$. If $\mu(V)-\mu(W) > -\deg(L)$ then
  $\alpha_0>\alpha_m$, and if $\mu(V)-\mu(W) < -\deg(L)$ then $\alpha_{q-2}>\alpha_m$.
\item[$(ii)$] Assume that $p<q$. If $\mu(V)-\mu(W) < \deg(L)$ then
  $\alpha_0'<\alpha_M$, and if $\mu(V)-\mu(W) > \deg(L)$ then
  $\alpha'_{p-2}<\alpha_M$. 
\end{itemize}
\end{proposition}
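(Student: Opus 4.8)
The plan is to recognize every quantity in the statement as a value of one of the two one-variable rational functions that already drive the proof of Proposition~\ref{bound for alpha}, and then to extract the inequalities from the strict monotonicity of those functions. Write $c=\frac{2pq}{p+q}$ and $d=\deg(L)$, and recall from that proof the functions $g(r)=\frac{rd+\tau(E)}{r-c}$ and $f(r)=\frac{rd-\tau(E)}{c-r}$, both considered on $r\in[0,\min\{p,q\}]$, where $\tau(E)=c\,(\mu(V)-\mu(W))$ is the Toledo invariant. The point is that the bound on $\alpha$ extracted from inequality~\eqref{bound on beta} for a bundle with $\rk(\beta)=r$ is exactly $g(r)$, and the bound extracted from \eqref{bound on gamma} for $\rk(\gamma)=r$ is exactly $f(r)$; the extreme values $\alpha_m$ and $\alpha_M$ are the values of $g$ and $f$ at the endpoint of $[0,\min\{p,q\}]$ at which they are extremal.

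First I would treat part (i), where $p>q$ so $\min\{p,q\}=q$. Tracing the derivation in Proposition~\ref{surjective}(i)--(ii), a subobject with $\rk(\ker\beta)=n$ (hence $\rk(\beta)=q-n$) forces $\alpha\geq g(q-n)$, so that $\alpha_i=g(q-i-1)$; in particular $\alpha_0=g(q-1)$ and $\alpha_{q-2}=g(1)$. The proof of Proposition~\ref{bound for alpha} shows that $\alpha_m=g(q)$ precisely when $\mu(V)-\mu(W)>-\deg(L)$, which is exactly the case in which $g$ is strictly decreasing, and $\alpha_m=g(0)$ precisely when $\mu(V)-\mu(W)<-\deg(L)$, the case in which $g$ is strictly increasing. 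Hence in the first case $\alpha_0=g(q-1)>g(q)=\alpha_m$ because $q-1<q$ and $g$ decreases, and in the second case $\alpha_{q-2}=g(1)>g(0)=\alpha_m$ because $1>0$ and $g$ increases.

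Part (ii) ($p<q$, so $\min\{p,q\}=p$) is handled identically with $f$ in place of $g$: one has $\alpha_j'=f(p-j-1)$, so $\alpha_0'=f(p-1)$ and $\alpha_{p-2}'=f(1)$, while $\alpha_M=f(p)$ when $\mu(V)-\mu(W)<\deg(L)$ ($f$ strictly increasing) and $\alpha_M=f(0)$ when $\mu(V)-\mu(W)>\deg(L)$ ($f$ strictly decreasing); the inequalities $\alpha_0'<\alpha_M$ and $\alpha_{p-2}'<\alpha_M$ then follow from $p-1<p$ and $1>0$. Alternatively, part (ii) can be obtained from part (i) by passing to the dual Higgs bundle and interchanging $V$ and $W$ via Remark~\ref{rem:upq-duality}, which exchanges the roles of $\beta$ and $\gamma$ and sends $\mu(V)-\mu(W)$ to its negative.

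I expect the only real work, and the step that must be carried out with care, to be the bookkeeping: verifying the elementary identities $\alpha_0=g(q-1)$, $\alpha_{q-2}=g(1)$ and their primed analogues (each of which merely re-expresses a formula already derived), and matching each hypothesis on the sign of $\deg(L)\pm(\mu(V)-\mu(W))$ to the correct direction of monotonicity, hence to the correct endpoint defining $\alpha_m$ or $\alpha_M$ and the correct adjacent integer defining the threshold. Once the arguments of $f$ and $g$ are lined up, strictness of the inequalities is automatic, since the sub-case hypotheses are precisely the strict-monotonicity conditions (the functions are constant only on the excluded boundary $\deg(L)=\pm(\mu(V)-\mu(W))$).
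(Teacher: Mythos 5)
Your argument is correct, and it is a unified, slightly cleaner version of what the paper does. The identifications $\alpha_i=g(q-i-1)$, $\alpha_j'=f(p-j-1)$, $\alpha_m=g(\min\{p,q\})$ or $g(0)$, and $\alpha_M=f(\min\{p,q\})$ or $f(0)$ all check out against the explicit formulas, and the sub-case hypotheses are exactly the conditions for strict monotonicity of $g$ and $f$, so all four inequalities do drop out of comparing arguments. The paper's own proof already uses precisely this mechanism for the second halves of (i) and (ii): it observes $\alpha_m=\alpha_{q-1}<\alpha_{q-2}$, resp.\ $\alpha_M=\alpha'_{p-1}>\alpha'_{p-2}$, which is your $g(0)<g(1)$, resp.\ $f(0)>f(1)$. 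For the first halves, however, the paper instead expands $\alpha_0-\alpha_m$ (resp.\ $\alpha_M-\alpha_0'$) explicitly and bounds it below by $0$ using the sign of the coefficient of $\mu(V)-\mu(W)$; your treatment replaces that computation by the one-line observation $g(q-1)>g(q)$ (resp.\ $f(p-1)<f(p)$), which buys uniformity and less bookkeeping at the cost of having to verify the identifications $\alpha_0=g(q-1)$, $\alpha_m=g(q)$, etc.\ --- exactly the step you flag as the real work. One caveat on your aside: the symmetry that reduces (ii) to (i) is $\sigma$ alone (interchanging $V$ and $W$ as in Proposition~\ref{prop:sigma-alpha-stability}), which negates both $\alpha$ and $\mu(V)-\mu(W)$ and swaps the roles of $\beta$ and $\gamma$; the composite ``dualize and interchange'' that you describe leaves $\mu(V)-\mu(W)$ and the sign of $\alpha$ unchanged, so it does not match the hypotheses of (i). This is harmless, since your direct argument with $f$ is complete on its own.
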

\begin{proof}
For $(i)$, using $\mu(V)-\mu(W)> -\deg(L)$ we get 
\begin{align*}
  \alpha_0-\alpha_m&=\bigl(\mu(V)-\mu(W)\bigr)
    \Bigl(\dfrac{-2pq}{q(p-q)+p+q}+\dfrac{2p}{p-q}\Bigr)\\
  &\phantom{==}+\deg(L)
    \Bigl(\dfrac{-2pq}{q(p-q)+p+q}+1+\dfrac{p+q}{p-q}\Bigr)\\
  &>\deg(L)\Bigl(-\dfrac{2p}{p-q}+1+\dfrac{p+q}{p-q}\Bigr)=0,
\end{align*}
where we have used that $p>q$ makes the term which multiplies
$\mu(V)-\mu(W)$ positive. Thus $\alpha_0>\alpha_m$. Moreover, when $\mu(V)-\mu(W) < -\deg(L)$
and $p>q$, we have $\alpha_m=\alpha_{q-1}<\alpha_{q-2}$ (cf.\ the
proof of Proposition~\ref{surjective}). This finishes
the proof of $(i)$.

For $(ii)$, using $\mu(V)-\mu(W)<\deg(L)$ we obtain the following 
\begin{align*}
  \alpha_M-\alpha_0'&=\bigl(\mu(V)-\mu(W)\bigr)
    \Bigl(\dfrac{-2q}{q-p}+\dfrac{2pq}{p(q-p)+p+q}\Bigr)\\
  &\phantom{==}+\deg(L)
    \Bigl(\dfrac{p+q}{q-p}-\dfrac{2pq}{p(q-p)+p+q}+1\Bigr)\\
  &>\deg(L)\Bigl(-\dfrac{2q}{q-p}+1+\dfrac{p+q}{q-p}\Bigr)=0,
\end{align*}
where we have used that $p<q$ makes the term which multiplies
$\mu(V)-\mu(W)$ negative.
Hence $\alpha_0'<\alpha_M$.  Moreover, when $\mu(V)-\mu(W) > \deg(L)$
and $p<q$, we have $\alpha_M=\alpha'_{p-1}>\alpha'_{p-2}$ (again, cf.\ the
proof of Proposition~\ref{surjective}). This finishes
the proof of $(ii)$.
\end{proof}

\subsection{The comparison between $\U(p, q)$-Higgs bundles and $\GL(p + q, \C)$-Higgs bundles}
Any $\U(p,q)$-Higgs bundle gives rise to a $\GL(p+q,\C)$-Higgs
bundle. In this section we compare the respective stability
conditions. We shall not need these results in the remainder of the
paper but for completeness we have chosen to include them, since the
question is a natural one to consider.

We recall the following about $\GL(n, \C)$-Higgs bundles.
A $\GL(n, \C)$-Higgs bundle on $X$ is a pair $(E, \phi)$, where $E$ is
a rank $n$ holomorphic vector bundle over $X$ and
$\phi\in H^0(\End(E)\otimes K)$ is a holomorphic endomorphism of $E$
twisted by the canonical bundle $K$ of $X$.  More generally, replacing
$K$ by an arbitrary line bundle on $X$, we obtain the notion of a
$L$-twisted $\GL(n,\C)$-Higgs bundle on $X$.
The $\GL(n,\C)$-Higgs bundle $(E,\phi)$ is stable if the slope
stability condition
\begin{displaymath}
\mu(E') <\mu(E)
\end{displaymath}
holds for all non-zero proper $\phi$-invariant subbundles $E'$ of
$E$. Semistability is defined by replacing the strict inequality with
a weak inequality. A twisted Higgs bundle is called polystable if it is the
direct sum of stable twisted Higgs bundles with the same slope.

\begin{remark}
  Nitsure \cite{nitsure:1991} was the first to study twisted Higgs
  bundles in a systematic way. For some of his results he needs to
  make the assumption $\deg(L)\geq 2g-2$ (similarly, for example, to
  our Proposition~\ref{vanishing main} below). However, the 
  comparison of stability conditions which we carry out here is valid
  for any $L$.
\end{remark}

For any twisted $\U(p,q)$-Higgs bundle $E=(V,W,\beta,\gamma)$ we can
associate a twisted $\GL(p+q,\C)$-Higgs bundle defined by taking
$\widetilde{E}=V\oplus W$ and
$\phi=\left( \begin{array}{cc}0 &\beta \\ \gamma& 0\end{array}
\right)$.

The following result is reminiscent of Theorem~3.26 of
\cite{Gothen:2012}, which is a result for $\mathrm{Sp}(2n, \R)$-Higgs
bundles. The corresponding result for $0$-semistable $\U(p,q)$-Higgs
bundles can be found in the appendix to the first preprint version of
\cite{Gothen:2013} and the proof given there easily adapts to the
present situation. We include it here for the convenience of the
reader.

Recall from Proposition~\ref{surjective} that for $p=q$, 
\begin{align}
  \label{eq:3}
  \alpha_0&=p\big(\mu(W)-\mu(V)-\deg(L)\big)+\deg(L),\\
  \label{eq:4}
  \alpha_0'&=p\big(\mu(W) - \mu(V) + \deg(L)\big) - \deg(L).
\end{align} 




\begin{proposition}
  \label{prop:upp-alpha-gl-stability}
  Let $E=(V,W,\beta,\gamma)$ be an $\alpha$-semistable twisted
  $\U(p,q)$-Higgs bundle such that $p=q$ and let $\alpha_0$ and
  let $\alpha_0'$ be given by \eqref{eq:3} and \eqref{eq:4},
  respectively. Suppose that one of the following conditions holds:
\begin{enumerate}
  \item $\mu(V)-\mu(W)>-\deg(L)$ and $0\leq\alpha<\alpha_0$.
  \item $\mu(V)-\mu(W)<\deg(L)$ and $\alpha_0'<\alpha\leq 0$. 
\end{enumerate}
Then the associated $\GL(2p,\C)$-Higgs bundle $\widetilde{E}$ is
semistable. Moreover $\alpha$-stability of $E$ implies stability of
$\widetilde{E}$ unless there is an isomorphism $f:V\to W$ such that
$\beta f=f^{-1}\gamma$. In this case $(\widetilde{E},\phi)$ is
polystable and decomposes as
$$
(\widetilde{E},\phi)=(\widetilde{E}_1,\phi_1)\oplus
(\widetilde{E}_2,\phi_2)
$$
where each summand is a stable $\GL(p,\C)$-Higgs bundle isomorphic to $(V,\beta f)$.
\end{proposition}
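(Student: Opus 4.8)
The plan is to translate the stability condition for the $\GL(2p,\C)$-Higgs bundle $\widetilde{E}=(V\oplus W,\phi)$ into the language of twisted $\U(p,q)$-Higgs bundles, and then to play off the hypotheses on $\alpha$ against the structural information supplied by Proposition~\ref{surjective}. The key observation is that a $\phi$-invariant subbundle $\widetilde{F}\subset V\oplus W$ need \emph{not} respect the decomposition into $V$ and $W$, so I cannot directly compare it with a $\U(p,q)$-subobject. To remedy this I would set $V'=\widetilde{F}\cap V$ and $W'=\widetilde{F}\cap W$ (taking saturations so that quotients stay torsion-free), so that $\widetilde{F}' = V'\oplus W'$ is a genuine $\U(p,q)$-subobject $E'=(V',W',\beta',\gamma')$ contained in $\widetilde{F}$. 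Since $\deg$ is only increased by passing to the saturation, it suffices to bound $\mu(\widetilde{F})$ by $\mu(\widetilde{F}')=\mu(E')$ up to controllable error, and then invoke $\alpha$-semistability of $E$.

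First I would record the elementary inequality relating the ordinary slope $\mu(\widetilde{F})$ and the $\alpha$-slope $\mu_\alpha(E')$: writing $(p',q')=(\rk V',\rk W')$, one has
\begin{displaymath}
\mu(\widetilde{F}) - \mu(\widetilde{E}) = \mu(E')-\mu(E) = \bigl(\mu_\alpha(E')-\mu_\alpha(E)\bigr) - \alpha\Bigl(\frac{p'}{p'+q'}-\frac{p}{p+q}\Bigr).
\end{displaymath}
By $\alpha$-semistability the first bracket is $\le 0$, so the sign of the correction term is governed by $\alpha$ and by $\tfrac{p'}{p'+q'}-\tfrac12$ (recall $p=q$). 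The hypothesis $0\le\alpha<\alpha_0$ in case (1) is precisely the regime in which Proposition~\ref{surjective}(i) forces $\beta$ to be \emph{injective}; this is what lets me control the dimension count for a $\phi$-invariant $\widetilde{F}$ that is not a direct sum. The main step is then to show that injectivity of $\beta$ forces $q'\le p'$ (equivalently $\tfrac{p'}{p'+q'}\ge\tfrac12$) for any saturated $\phi$-invariant $\widetilde{F}$, so that the correction term has the right sign and $\mu(\widetilde{F})\le\mu(\widetilde{E})$ follows. Case (2) is handled symmetrically using $\gamma$ injective, or more efficiently by applying Remark~\ref{rem:upq-duality} to pass to $E^\ast$ and reduce to case (1).

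For the equality/polystability statement, I would analyze when $\widetilde{E}$ fails to be stable, i.e.\ when equality $\mu(\widetilde{F})=\mu(\widetilde{E})$ is achieved by some proper $\phi$-invariant $\widetilde{F}$ with $E$ nevertheless $\alpha$-stable. Tracing back the inequalities, equality forces $p'=q'$ and $\mu_\alpha(E')=\mu_\alpha(E)$; but $\alpha$-stability of $E$ then forbids a genuine $\U(p,q)$-subobject of the same $\alpha$-slope, so the destabilizing $\widetilde{F}$ must be a ``diagonal'' subbundle, the graph of an isomorphism $f\colon V\to W$ compatible with $\phi$. The compatibility $\phi(\widetilde{F})\subset\widetilde{F}\otimes L$ unwinds exactly to the condition $\beta f = f^{-1}\gamma$. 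On the eigenbundles of the resulting involution one reads off the splitting $(\widetilde{E},\phi)=(\widetilde{E}_1,\phi_1)\oplus(\widetilde{E}_2,\phi_2)$ with each summand a rank-$p$ twisted Higgs bundle isomorphic to $(V,\beta f)$; stability of each factor follows from $\alpha$-stability of $E$ at $\alpha=0$ applied to $\phi$-invariant subbundles of the factors.

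\textbf{Main obstacle.} The delicate point is the bookkeeping for $\phi$-invariant subbundles that do not split as $V'\oplus W'$: I must verify that passing to $\widetilde{F}\cap V$ and $\widetilde{F}\cap W$ (and their saturations) loses no degree in the wrong direction, and that injectivity of $\beta$ (resp.\ $\gamma$) genuinely forces the rank inequality $q'\le p'$ (resp.\ $p'\le q'$) needed to fix the sign of the $\alpha$-correction term. Getting this rank comparison right, uniformly across the interval $0\le\alpha<\alpha_0$, is where the hypotheses and Proposition~\ref{surjective} must be combined with care.
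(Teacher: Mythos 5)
There is a genuine gap in the main semistability step. Your displayed identity
\begin{displaymath}
\mu(\widetilde{F}) - \mu(\widetilde{E}) = \mu(E')-\mu(E)
\end{displaymath}
with $V'=\widetilde{F}\cap V$ and $W'=\widetilde{F}\cap W$ is false in general: $V'\oplus W'$ is only a subsheaf of $\widetilde{F}$, typically of strictly smaller rank, so its slope bears no direct relation to $\mu(\widetilde{F})$. The failure is most extreme in exactly the case that matters for the second half of the statement: if $\widetilde{F}$ is the graph of an isomorphism $f\colon V\to W$, then $\widetilde{F}\cap V=\widetilde{F}\cap W=0$, and your subobject $E'$ is trivial while $\widetilde{F}$ has rank $p$. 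So controlling $\mu(\widetilde{F})$ via the intersection subobject alone cannot work, and the "controllable error" you defer to the end is not controllable by this route. Likewise, the rank inequality you need ($q'\le p'$ from injectivity of $\beta$) is a statement about the \emph{projections} of $\widetilde{F}$ to $V$ and $W$, not about the intersections.

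The paper's proof closes this gap by using \emph{both} families of subobjects simultaneously. Projecting $\widetilde{E}'$ onto $V$ and onto $W$ gives two short exact sequences
\begin{displaymath}
0\to W''\to \widetilde{E}'\to V'\to 0,\qquad 0\to V''\to \widetilde{E}'\to W'\to 0,
\end{displaymath}
where $V',W'$ are the images of the projections and $V''=\widetilde{E}'\cap V$, $W''=\widetilde{E}'\cap W$. Both $(V',W')$ and $(V'',W'')$ are subobjects of $E$ (by $\phi$-invariance), and summing the two $\alpha$-semistability inequalities, together with $\deg W''+\deg V'=\deg\widetilde{E}'=\deg V''+\deg W'$ and the analogous rank identity, yields
\begin{displaymath}
\mu(\widetilde{E}')\leq \mu(\widetilde{E})+\frac{q'-p'}{p'+p''+q'+q''}\,\alpha ,
\end{displaymath}
after which injectivity of $\beta$ (resp.\ $\gamma$), supplied by Proposition~\ref{surjective} under hypothesis (1) (resp.\ (2)), fixes the sign of the correction term. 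Your outline of the equality analysis --- equality forces a graph subbundle, the invariance condition unwinds to $\beta f=f^{-1}\gamma$, and the $\pm$ eigenbundles of the induced involution give the polystable splitting --- does agree with the paper, but it only becomes available once the semistability inequality above is established by the two-sided bookkeeping you are missing.
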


\begin{proof}
  Let $\widetilde{E}'$ be an invariant subbundle of
  $\widetilde{E}$. By projecting onto $V$ and $W$ and taking the
  kernels and images, we get the following short exact sequences:
\begin{align}\label{eq:short-exact-sequance}
&0\to W''\to \widetilde{E}'\to V'\to 0,\nonumber\\
&0\to V''\to \widetilde{E}'\to W'\to 0.
\end{align}
We can then deduce that
\begin{align}\label{eq:s1}
  \deg W'' +\deg V' &=\deg\widetilde{E}' = \deg V'' +\deg W'\nonumber\\
  q''+p' &=\rk\widetilde{E}' =p''+q'
\end{align}
where $q''$, $q'$, $p''$ and $p'$ denote the rank of $W''$, $W'$, $V''$ and $V'$, respectively. Note that $(V',W')$ and $(V'',W'')$ define subobjects of $E$. The $\alpha$-semistability conditions applied to these subobjects imply
\begin{align}
\deg V' +\deg W'&\leq \mu(E)(p'+q')+\frac{q'-p'}{2}\alpha\label{eq:slope-condition-E'}\\
\deg V'' +\deg W''&\leq \mu(E)(p''+q'')+\frac{q''-p''}{2}\alpha\label{eq:slope-condition-E''}
\end{align}
Adding these two inequalities and using \eqref{eq:s1}, we get
\begin{equation}
  \label{eq:slope-tilde-E-prime}
  \mu(\widetilde{E}')\leq
  \mu(\widetilde{E})+\frac{q'-p'+q''-p''}{2(p'+p''+q'+q'')}\alpha
  = \mu(\widetilde{E})+\frac{q'-p'}{p'+p''+q'+q''}\alpha
\end{equation}
From Proposition~\ref{surjective} we obtain the injectivity of $\beta$
and $\gamma$ by using the hypotheses $(1)$ and $(2)$,
respectively. Injectivity of $\beta$ and $\gamma$ yield $q'\leq p'$
and $q'\geq p'$, respectively. Hence, in either case $(q'-p')\alpha$
is negative. Therefore \eqref{eq:slope-tilde-E-prime} proves that
$\widetilde{E}$ is semistable. \par
Suppose now that $E$ is $\alpha$-stable. Then, by the above argument,
$\widetilde{E}$ is semistable and it is stable if
\eqref{eq:slope-tilde-E-prime} is strict for all non-trivial
subbundles $\widetilde{E}'\subset \widetilde{E}$. The equality holds
in \eqref{eq:slope-tilde-E-prime} if it holds in both
\eqref{eq:slope-condition-E'} and
\eqref{eq:slope-condition-E''}. Since $E$ is $\alpha$-stable the only
way in which a non-trivial subbundle
$\widetilde{E}'\subset \widetilde{E}$ can yield equality in
\eqref{eq:slope-tilde-E-prime} is that
$$V'\oplus W'=V\oplus W\mbox{ and }V''\oplus W''.$$ 
In this case from \eqref{eq:short-exact-sequance} we obtain
isomorphisms $E'\to V$ and $E'\to W$. Therefore, combining these, we
get an isomorphism $f:V\to W$ such that $\beta f=f^{-1}\gamma$. Hence,
if there is no such isomorphism between $V$ and $W$ then $(\widetilde{E},\phi)$ is $\alpha$-stable.

Now suppose that there exists such an isomorphism $f:V\to W$, define
\begin{align*}
(\widetilde{E}_1,\phi_1)&=(\{(v,f(v))\in\widetilde{E}|v\in V\}, \phi|_{\widetilde{E}_1}),\\
(\widetilde{E}_2,\phi_2)&=(\{(v,-f(v))\in\widetilde{E}|v\in V\}, \phi|_{\widetilde{E}_2}).
\end{align*}
The fact that $\beta f=f^{-1}\gamma$ implies that $(E_i,\phi_i)$, $i=1, 2$, define $\GL(n,\C)$-Higgs bundles isomorphic to $(V,\beta f)$. We have $$(\widetilde{E},\phi)=(\widetilde{E}_1,\phi_1)\oplus(\widetilde{E}_2,\phi_2),$$
with $$\mu(\widetilde{E}_1)=\mu(\widetilde{E})=\mu(\widetilde{E}_2).$$
To show that each summand is a stable $\GL(n,\C)$-Higgs bundle, note that any non-trivial subbundle $\widetilde{E}'$ of $\widetilde{E}_i$ is a subbundle of $\widetilde{E}$ and hence $\mu(\widetilde{E}')<\mu(\widetilde{E})=\mu(\widetilde{E}_i)$. 
\end{proof}

\begin{remark}
\label{rem:comparision}
We can also conclude from the proof of the above proposition that a
twisted $\U(p,q)$-Higgs bundle is $\alpha$-semistable for $\alpha=0$
if and only if the associated $\GL(p+q,\C)$-Higgs bundle is
semistable. Equivalence also holds for stability, unless there is
an isomorphism $f:V\to W$ such that $\beta f=f^{-1}\gamma$. 
\end{remark}

\subsection{Vanishing of hypercohomology in degree two}
In order to study smoothness of the moduli space we investigate
vanishing of the second hypercohomology group of the deformation
complex (cf.\ Proposition~\ref{smooth0}). This vanishing will also
play an important role in the analysis of the flip loci in
Section~\ref{sec:crossing}. We note that vanishing is not guaranteed by
$\alpha$-stability for $\alpha\neq 0$, in contrast to the case of
triples (and chains), where vanishing is guaranteed for $\alpha>0$.

By using the obvious symmetry of the quiver interchanging the vertices
we can associate to a $\U(p,q)$-Higgs bundle a $\U(q,p)$-Higgs
bundle. The following proposition is immediate.

\begin{proposition}
  \label{prop:sigma-alpha-stability}
  Let $E=(V,W,\beta,\gamma)$ be a $\U(p,q)$-Higgs bundle and let
  $\sigma(E) = (W,V,\gamma,\beta)$ be the associated $\U(q,p)$-Higgs
  bundle. Then $E$ is $\alpha$-stable if and only if $\sigma(E)$
  is $-\alpha$-stable, and similarly for poly- and
  semi-stability. \qed
\end{proposition}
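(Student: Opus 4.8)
The plan is to show directly that the symmetry $\sigma$ interchanging the two vertices of the quiver \eqref{eq:quiver-Q} sets up a slope-reversing bijection on subobjects. First I would observe that $\sigma$ is an involution on the category of twisted $\U(p,q)$-Higgs bundles valued in twisted $\U(q,p)$-Higgs bundles: given $E=(V,W,\beta,\gamma)$, the bundle $\sigma(E)=(W,V,\gamma,\beta)$ simply swaps the roles of the two vector bundles and of the two maps, and $\sigma$ carries subobjects of $E$ bijectively to subobjects of $\sigma(E)$. Concretely, a subobject $F=(V',W')\subset E$ (with the induced maps) corresponds to the subobject $\sigma(F)=(W',V')\subset\sigma(E)$, since the defining commutativity conditions for $F$ and for $\sigma(F)$ are the same equations.

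Next I would compute how the $\alpha$-slope transforms under $\sigma$. For any $F=(V',W')$ with ranks $p',q'$ one has
\begin{displaymath}
  \mu_\alpha(F)=\mu(V'\oplus W')+\alpha\frac{p'}{p'+q'},
\end{displaymath}
whereas the $(-\alpha)$-slope of $\sigma(F)=(W',V')$, where now the first vertex carries $W'$ of rank $q'$, is
\begin{displaymath}
  \mu_{-\alpha}(\sigma(F))=\mu(W'\oplus V')+(-\alpha)\frac{q'}{p'+q'}
  =\mu(V'\oplus W')+\alpha\frac{p'}{p'+q'}-\alpha\frac{p'+q'}{p'+q'}+\alpha\frac{p'}{p'+q'}.
\end{displaymath}
Since $\mu(V'\oplus W')$ is symmetric in the two summands and $\tfrac{q'}{p'+q'}=1-\tfrac{p'}{p'+q'}$, this simplifies to $\mu_{-\alpha}(\sigma(F))=\mu_\alpha(F)-\alpha$. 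The key point is that this shift by the constant $-\alpha$ is independent of $F$, exactly as in Remark~\ref{zero}. I would verify the same identity for $E$ itself, giving $\mu_{-\alpha}(\sigma(E))=\mu_\alpha(E)-\alpha$.

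With these two computations the result is immediate: for any proper subobject $F\subset E$ we have
\begin{displaymath}
  \mu_\alpha(F)<\mu_\alpha(E)
  \iff \mu_{-\alpha}(\sigma(F))<\mu_{-\alpha}(\sigma(E)),
\end{displaymath}
because subtracting the same constant $\alpha$ from both sides preserves the strict inequality. Since $\sigma$ is a bijection on proper subobjects, $E$ is $\alpha$-stable precisely when every proper subobject of $\sigma(E)$ strictly lowers the $(-\alpha)$-slope, i.e.\ when $\sigma(E)$ is $(-\alpha)$-stable. Replacing strict inequalities by weak ones gives the semistable case verbatim, and applying $\sigma$ to a direct-sum decomposition of equal-slope stable summands (and using that $\sigma$ preserves direct sums) gives the polystable case.

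I do not anticipate a genuine obstacle here; the statement is a formal consequence of the vertex-swapping symmetry together with the slope bookkeeping, which is why the authors call it immediate. The only point requiring a little care is confirming that the parameter genuinely flips sign rather than merely shifts: this is ensured by the asymmetry of the term $\alpha\frac{p'}{p'+q'}$ under interchanging $p'$ and $q'$, which is what produces the $-\alpha$ factor after the constant shift is absorbed. Everything else is the standard observation, already used in Remark~\ref{zero}, that a global constant added to the slope does not affect the (semi/poly)stability comparison.
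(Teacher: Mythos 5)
Your argument is correct and is exactly the computation the paper has in mind when it declares the proposition immediate: $\sigma$ is a bijection on subobjects and $\mu_{-\alpha}(\sigma(F))=\mu_\alpha(F)-\alpha$ for every nonzero subobject $F$, so all slope comparisons are preserved after the uniform constant shift (cf.\ Remark~\ref{zero}). Note only that your intermediate display contains a duplicated term $\alpha\frac{p'}{p'+q'}$ — the correct expansion is $\mu(V'\oplus W')-\alpha\frac{p'+q'}{p'+q'}+\alpha\frac{p'}{p'+q'}$ — but the simplified identity you state afterwards is right, so this is only a typo.
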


The next result uses this construction and Serre duality to identify the second
hypercohomology of the Hom-complex with the dual of a zeroth
hypercohomology group.

\begin{lemma}
  \label{lem:H2-is-H0}
  Let  $E=(V,W,\beta,\gamma)$ be a $L$-twisted $\U(p,q)$-Higgs bundle
  and $E'=(V',W',\beta',\gamma')$ a $L$-twisted $\U(p',q')$-Higgs
  bundle. Let
  $E''=\sigma(E')\otimes L^{-1}K=(W'\otimes L^{-1}K,V'\otimes
  L^{-1}K,\gamma\otimes 1,\beta\otimes 1)$. Then
  \begin{displaymath}
    \HH^2(\mathpzc{Hom}^\bullet(E',E))
      \cong \HH^0(\mathpzc{Hom}^\bullet(E,E''))^*.
  \end{displaymath}
\end{lemma}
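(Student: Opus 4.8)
The plan is to compute $\HH^2$ via Serre duality applied to the $\Hom$-complex. The essential observation is that $\mathpzc{Hom}^\bullet(E',E)$ is a two-term complex of vector bundles concentrated in degrees $0$ and $1$, so its second hypercohomology $\HH^2$ is expressible through the hypercohomology of the Serre-dual complex in complementary degree. Concretely, for a two-term complex $C^\bullet = (C^0 \xra{a_0} C^1)$, Serre duality on the compact Riemann surface $X$ gives $\HH^2(C^\bullet) \cong \HH^0\big((C^\bullet)^\vee \otimes K\big)^*$, where $(C^\bullet)^\vee \otimes K$ is the complex $(C^1)^*\otimes K \xra{a_0^*\otimes 1} (C^0)^*\otimes K$, now placed in degrees $0$ and $1$. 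So the first step is to set up this Serre-duality isomorphism for $\Hom$-complexes, which follows from the hypercohomology spectral sequence together with ordinary Serre duality $H^i(F)\cong H^{1-i}(F^*\otimes K)^*$ on the bundles $\mathpzc{Hom}^0$ and $\mathpzc{Hom}^1$.

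Next I would identify the Serre-dual complex $(\mathpzc{Hom}^\bullet(E',E))^\vee \otimes K$ explicitly as a $\Hom$-complex of $\U(p,q)$-Higgs bundles. Writing out $\mathpzc{Hom}^0 = \Hom(V',V)\oplus\Hom(W',W)$ and $\mathpzc{Hom}^1 = \Hom(V',W\otimes L)\oplus\Hom(W',V\otimes L)$, their duals tensored with $K$ give bundles of the form $\Hom(W,V'\otimes L^{-1}K)$, $\Hom(V,W'\otimes L^{-1}K)$, and so forth. The key bookkeeping step is to recognize that these are exactly the terms of $\mathpzc{Hom}^\bullet(E,E'')$ for $E''=\sigma(E')\otimes L^{-1}K = (W'\otimes L^{-1}K, V'\otimes L^{-1}K, \gamma'\otimes 1, \beta'\otimes 1)$. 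The $\sigma$ (vertex-swap) appears because Serre duality interchanges the roles of $V$ and $W$ in the sense that $\Hom(V',W\otimes L)^*\otimes K \cong \Hom(W, V'\otimes L^{-1}K)$ naturally pairs $V'$-data in $E'$ against $W$-data in $E$; the $L^{-1}K$ twist is the residual twist after absorbing one $L$ and one $K$. The crucial verification is that the differential of the dual complex matches $a_0$ for the pair $(E,E'')$, i.e.\ that the adjoint maps $\phi_a^*, \phi_b^*$ reassemble into the maps $\phi_a,\phi_b$ built from $\gamma'\otimes 1$ and $\beta'\otimes 1$ acting on $E$. This compatibility of differentials, not the term-by-term identification, is where the genuine content lies.

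I expect the main obstacle to be precisely this matching of differentials up to sign and twist, together with checking that the natural Serre-duality isomorphisms on the individual $\Hom$-bundles are compatible with the maps in the two complexes, so that they assemble into an isomorphism of complexes (or at least a quasi-isomorphism inducing the claimed hypercohomology isomorphism). One must be careful that the canonical pairing $\Hom(A,B\otimes L)\otimes \Hom(B, A\otimes L^{-1}K)\to K$, given by composition and trace, is the one that realizes Serre duality, and that under this pairing the map $a_0$ for $(E',E)$ is adjoint to the map $a_0$ for $(E,E'')$. Once the differentials are shown to be mutually adjoint under the trace pairing, the isomorphism $\HH^2(\mathpzc{Hom}^\bullet(E',E))\cong \HH^0(\mathpzc{Hom}^\bullet(E,E''))^*$ follows formally from Serre duality for hypercohomology of the two-term complex, completing the argument.
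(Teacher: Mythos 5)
Your proposal is correct and follows essentially the same route as the paper: apply Serre duality for hypercohomology to the two-term complex to get $\HH^2(\mathpzc{Hom}^\bullet(E',E))\cong\HH^0(\mathpzc{Hom}^{\bullet\vee}(E',E)\otimes K)^*$, then identify the dual complex twisted by $K$ with $\mathpzc{Hom}^\bullet(E,E'')$, including the check that the differentials correspond. The paper's proof is exactly this, stated more tersely.
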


\begin{proof}
By Serre duality for hypercohomology  
$$
\mathbb{H}^2(\mathpzc{Hom}^\bullet(E',E))
\cong 
\mathbb{H}^0(\mathpzc{Hom}^{\bullet^\vee}(E',E)\otimes K)^*
$$
where the dual complex twisted by $K$ is
\begin{multline*}
\mathpzc{Hom}^{\bullet^\vee}(E',E)\otimes K:\big(\Hom(V,W'\otimes
L^{-1})\oplus\Hom(W,V'\otimes L^{-1})\big)\otimes K \\ \to\big(\Hom(V,V')\oplus\Hom(W,W')\big)\otimes K.
\end{multline*}
One easily checks that the differentials correspond, so that
\begin{displaymath}
  \mathpzc{Hom}^{\bullet^\vee}(E',E)\otimes K \cong
  \mathpzc{Hom}^\bullet(E,E'').
\end{displaymath}
This completes the proof.
\end{proof}

\begin{lemma}
  \label{lem:lambda-f}
  Let  $E=(V,W,\beta,\gamma)$ be a $L$-twisted $\U(p,q)$-Higgs bundle
  and $E'=(V',W',\beta',\gamma')$ a $L$-twisted $\U(p',q')$-Higgs
  bundle. As above
  let
  $E''=\sigma(E')\otimes L^{-1}K = (W'\otimes L^{-1}K,V'\otimes
  L^{-1}K,\gamma'\otimes 1,\beta'\otimes 1)$.
  Let $f\in  \HH^0(\mathpzc{Hom}^\bullet(E,E''))$ viewed as
  morphism $f\colon E\to E''$ and write
  $\lambda(f)=\frac{\rk(f(V))}{\rk(f(V))+\rk(f(W))}$. Then, if
  $f\neq 0$, the inequality
  \begin{equation}
    \label{eq:h2-vanishing-ineq}
    \alpha(2\lambda(f)-1)+2g-2-\deg(L) \geq 0
  \end{equation}
  holds. Moreover, if $E$ and $E''$ are $\alpha$-stable, then strict
  inequality holds unless $f\colon E\xra{\cong}E''$ is an isomorphism.
\end{lemma}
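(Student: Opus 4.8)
The plan is to study a nonzero $f\colon E\to E''$ by means of its image. I set $I=\im(f)$, which is simultaneously a quotient $Q$-bundle of $E$ and a Higgs subbundle of $E''$, and I write $\bar p=\rk f(V)$ and $\bar q=\rk f(W)$, so that $\lambda(f)=\bar p/(\bar p+\bar q)$. The structural observation driving everything is the duality already used in Lemma~\ref{lem:H2-is-H0}: since $E''=\sigma(E')\otimes L^{-1}K$ we have $\sigma(E'')=E'\otimes L^{-1}K$, and hence $J:=\sigma(I)\otimes LK^{-1}$ is a Higgs subbundle of $E'$ whose two slot-ranks are those of $I$ interchanged, namely $\bar q$ and $\bar p$.

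First I would record the two inequalities furnished by $\alpha$-semistability of $E$ and of $E'$ (which amount to the hypotheses of the lemma, via Proposition~\ref{prop:sigma-alpha-stability}): because $I$ is a quotient of $E$ and $J$ a subobject of $E'$,
\[
  \mu_\alpha(I)\ge\mu_\alpha(E),\qquad \mu_\alpha(J)\le\mu_\alpha(E').
\]
The heart of the argument is then to compare $\mu_\alpha(J)$ with $\mu_\alpha(I)$. Replacing $I$ by $\sigma(I)$ interchanges the slot-ranks, so the weight $\alpha\bar p/(\bar p+\bar q)$ becomes $\alpha\bar q/(\bar p+\bar q)$ and contributes $-\alpha(2\lambda(f)-1)$; tensoring by $LK^{-1}$ leaves the ranks unchanged and shifts the ordinary slope by $\deg(LK^{-1})=-(2g-2-\deg L)$. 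The key point is that the unknown $\deg I$ occurs with the same coefficient in $\mu_\alpha(I)$ and in $\mu_\alpha(J)$ and therefore cancels, leaving
\[
  \mu_\alpha(J)-\mu_\alpha(I)=-(2g-2-\deg L)-\alpha\bigl(2\lambda(f)-1\bigr).
\]

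Combining this identity with the two semistability inequalities gives
\[
  -(2g-2-\deg L)-\alpha\bigl(2\lambda(f)-1\bigr)\le\mu_\alpha(E')-\mu_\alpha(E),
\]
that is, $\alpha(2\lambda(f)-1)+2g-2-\deg L\ge\mu_\alpha(E)-\mu_\alpha(E')$. Since in the situation under consideration $E$ and $E'$ carry the same $\alpha$-slope (for instance $E'=E$ for the deformation complex, or two stable bundles of equal slope in a flip locus), the right-hand side vanishes and the asserted inequality~$(\ref{eq:h2-vanishing-ineq})$ follows. For the final assertion I would run the equalities backwards: if $E$ and $E''$ are $\alpha$-stable and equality holds, then equality must hold in both bounds above, and stability forces the quotient $I$ of $E$ and the subobject $J=E'$ to be non-proper; thus $\ker f=0$ and $I=E''$, whence $f\colon E\xra{\cong}E''$, while a non-isomorphism makes one inequality strict.

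The step I expect to be the main obstacle is precisely the slope bookkeeping in the central identity: one must verify that $\sigma$ produces exactly the weight-swap $-\alpha(2\lambda(f)-1)$ and that the $LK^{-1}$-twist produces exactly $-(2g-2-\deg L)$, so that all dependence on $\deg I$, on the degrees of $E$ and $E'$, and on the ranks $p,q,p',q'$ drops out and only $\lambda(f)$ survives. Two minor points also need attention: the degenerate cases $\bar p=0$ or $\bar q=0$, where one component of $f$ vanishes and $\lambda(f)\in\{0,1\}$ (the slope formulas still apply verbatim), and the fact that $\im(f)$ need not be saturated in $E''$ (passing to the saturation only increases $\mu_\alpha(I)$ and preserves the ranks $\bar p,\bar q$, so the inequalities are unaffected).
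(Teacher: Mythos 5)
Your argument is correct and is essentially the paper's proof: the paper bounds $\mu_{-\alpha}(\im f)$ from above using $-\alpha$-semistability of $E''$ (via Proposition~\ref{prop:sigma-alpha-stability}), which is exactly the inequality you obtain by transporting $\im f$ to the subobject $J=\sigma(\im f)\otimes LK^{-1}$ of $E'$, and your slope bookkeeping and equality analysis match. You are also right to make explicit the implicit hypotheses that $E$ and $E'$ are $\alpha$-semistable with $\mu_\alpha(E)=\mu_\alpha(E')$, which is the setting in which the lemma is applied in Proposition~\ref{vanishing main}.
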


\begin{proof}
  Write $N=\ker(f) \subset
  E$ and $I=\im(f) \subset E''$. Then $\alpha$-semistability of $E$
  implies that $\mu_\alpha(N)\leq\mu_\alpha(E)$, which is equivalent
  to
  \begin{equation}
    \label{eq:1}
    \mu_\alpha(I) \geq \mu_\alpha(E);
  \end{equation}
  note that this also holds if $N=0$, since then $I\cong E$.
  Moreover, by Proposition~\ref{prop:sigma-alpha-stability}, $E''$ is
  $-\alpha$-semistable and so
  \begin{math}
    \mu_{-\alpha}(I)\leq \mu_{-\alpha}(E'').
  \end{math}
  This, using that $\mu_{-\alpha}(I) = \mu_\alpha(I)
  -2\alpha\lambda(f)$ and $\mu_{-\alpha}(E'') =
  \mu_\alpha(E)-\alpha+(2g-2-\deg(L))$, is equivalent to
  \begin{equation}
    \label{eq:2}
    \mu_\alpha(I) \leq \mu_\alpha(E) + 2\alpha\lambda(f) - \alpha +
    2g-2 - \deg(L).
  \end{equation}
  Combining \eqref{eq:1} and \eqref{eq:2} gives the result. The
  statement about strict inequality is easy.
\end{proof}



The following is our first main result on vanishing of $\HH^2$. It
should be compared with
\cite[Proposition~3.6]{bradlow-garcia-prada-gothen:2004}. The reason
why extra conditions are required for the vanishing is essentially
that the ``total Higgs field'' $\beta+\gamma\in H^0(\End(V\oplus
W)\otimes L)$ is
not nilpotent, contrary to the case of triples.

\begin{proposition}
\label{vanishing main}
Let $E=(V,W,\beta,\gamma)$ be a $L$-twisted $\U(p,q)$-Higgs bundle and
$E'=(V',W',\beta',\gamma')$ a $L$-twisted $\U(p',q')$-Higgs
bundle. Assume that $E$ and $E'$ are $\alpha$-semistable with
$\mu_\alpha(E)=\mu_\alpha(E')$. Let $E''=\sigma(E')\otimes
L^{-1}K$. Assume that one of the following hypotheses hold:
\begin{itemize}
\item[(A)] $\deg(L)>2g-2$;
\item[(B)] $\deg(L)=2g-2$, both $E$ and $E'$ are $\alpha$-stable and
  there is no isomorphism $f\colon E \xra{\cong} E''$.
\end{itemize}
Then $\mathbb{H}^2(\mathpzc{Hom}^\bullet(E',E))=0$ if one of the
following additional conditions holds:
\begin{itemize}
\item[(1)] $\alpha=0$;
\item[(2)] $\alpha > 0$ and either $\beta'$ is
  injective or $\beta$ is surjective;
\item[(3)] $\alpha < 0$ and either $\gamma'$ is
  injective or $\gamma$ is surjective.
\end{itemize}
\end{proposition}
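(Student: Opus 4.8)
The plan is to pass to the dual formulation and prove a non-existence statement for morphisms. By Lemma~\ref{lem:H2-is-H0} we have $\HH^2(\mathpzc{Hom}^\bullet(E',E))\cong\HH^0(\mathpzc{Hom}^\bullet(E,E''))^*$, so it is enough to show that the only morphism $f\colon E\to E''$ is $f=0$. Suppose $f\neq 0$ and write $f=(f_1,f_2)$ with $f_1\colon V\to W'\otimes L^{-1}K$ and $f_2\colon W\to V'\otimes L^{-1}K$. Lemma~\ref{lem:lambda-f} then gives $\alpha(2\lambda(f)-1)+2g-2-\deg(L)\geq 0$, and under hypothesis (B) this is strict, since there $f$ cannot be an isomorphism.

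In case (1), where $\alpha=0$, this is immediate: the inequality becomes $2g-2-\deg(L)\geq 0$, contradicting (A); under (B) the strict form gives $2g-2-\deg(L)>0$, contradicting $\deg(L)=2g-2$. So $f=0$ and $\HH^2=0$.

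In case (2) the sign of $\alpha$ fixes the sign of $2\lambda(f)-1$: under (A) one has $2g-2-\deg(L)<0$, so $\alpha(2\lambda(f)-1)>0$, and under (B) the strict inequality gives the same; since $\alpha>0$ this forces $\lambda(f)>1/2$, that is $\rk(f_1(V))>\rk(f_2(W))$. The crux is to contradict this using the hypothesis on $\beta$ or $\beta'$. Writing out $a_0(f_1,f_2)=0$ for the complex $\mathpzc{Hom}^\bullet(E,E'')$, and recalling that the Higgs fields of $E''$ are $\gamma'\otimes 1$ and $\beta'\otimes 1$, the morphism condition becomes
\[
(f_2\otimes 1)\circ\gamma=(\beta'\otimes 1)\circ f_1,\qquad (f_1\otimes 1)\circ\beta=(\gamma'\otimes 1)\circ f_2.
\]
If $\beta$ is surjective, then $\im\bigl((f_1\otimes 1)\circ\beta\bigr)=\im(f_1\otimes 1)$, and the second identity yields $\rk(f_1(V))=\rk\bigl((f_1\otimes 1)\circ\beta\bigr)=\rk\bigl((\gamma'\otimes 1)\circ f_2\bigr)\leq\rk(f_2(W))$. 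If instead $\beta'$ is injective, then $\beta'\otimes 1$ is generically injective, and the first identity yields $\rk(f_1(V))=\rk\bigl((\beta'\otimes 1)\circ f_1\bigr)=\rk\bigl((f_2\otimes 1)\circ\gamma\bigr)\leq\rk(f_2(W))$. In either case $\lambda(f)\leq 1/2$, contradicting $\lambda(f)>1/2$; thus $f=0$.

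Case (3) is handled by the vertex-swapping symmetry of Proposition~\ref{prop:sigma-alpha-stability}: it induces an isomorphism $\mathpzc{Hom}^\bullet(E',E)\cong\mathpzc{Hom}^\bullet(\sigma(E'),\sigma(E))$, interchanges $(\beta,\gamma)$ with $(\gamma,\beta)$ and replaces $\alpha$ by $-\alpha$, so that the hypotheses of (3) for $(E,E')$ turn into those of (2) for $(\sigma(E),\sigma(E'))$ and the vanishing of $\HH^2$ transfers. (Equivalently, one repeats the rank argument with $\gamma,\gamma'$ in place of $\beta,\beta'$, now forcing $\lambda(f)<1/2$ against $\lambda(f)\geq 1/2$.) I expect the main obstacle to be precisely the bookkeeping of the third paragraph: correctly identifying the Higgs fields of $E''=\sigma(E')\otimes L^{-1}K$, converting ``$f$ is a morphism of Higgs bundles'' into the two commuting relations above, and then arranging the generic-rank comparison so that the injectivity or surjectivity hypothesis pushes $\lambda(f)$ across $1/2$ in the direction opposite to the one demanded by Lemma~\ref{lem:lambda-f}.
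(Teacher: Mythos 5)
Your proposal is correct and follows essentially the same route as the paper: reduce via Serre duality (Lemma~\ref{lem:H2-is-H0}) to showing there is no nonzero $f\colon E\to E''$, use Lemma~\ref{lem:lambda-f} to force $\lambda(f)$ to one side of $1/2$, and use the commutation relations defining a morphism together with the injectivity/surjectivity hypothesis to push it to the other side. The only (minor) difference is that you handle the surjectivity subcases by the same direct rank count, whereas the paper reduces them to the injectivity case via the dual $E^*$ and the swap $\sigma$; both work.
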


\begin{proof}
Suppose first that $\alpha=0$. Then either of the conditions (A) and
(B) guarantee that strict inequality holds in
\eqref{eq:h2-vanishing-ineq}. Hence Lemmas~\ref{lem:H2-is-H0} and
\ref{lem:lambda-f} imply the stated vanishing of $\HH^2$.

Now suppose that $\beta'\colon W'\to V'\otimes L$ is injective. If
$f\colon E\to E''$ is non-zero then, since $f$ is a morphism of
twisted $\U(p,q)$-Higgs bundles, we have $\rk(f(W)) \geq \rk(f(V))$.
Hence $\lambda(f) = \frac{\rk(f(V))}{\rk(f(V))+\rk(f(W))}$ satisfies
$\lambda(f)\leq 1/2$. If additionally $\alpha>0$, it follows that
$\alpha(2\lambda(f)-1) \leq 0$ which contradicts
Lemma~\ref{lem:lambda-f} under either of the conditions (A) and
(B). Therefore there are no non-zero morphisms $f\colon E\to E''$ and so  Lemma~\ref{lem:H2-is-H0} implies vanishing of $\HH^2(\mathpzc{Hom}^\bullet(E',E))$.

We have deduced vanishing of $\HH^2$ under the conditions $\alpha>0$
and $\beta'$ injective. The remaining conditions in (2) and (3) for
vanishing of $\HH^2$ can now be deduced by using symmetry arguments as
follows.

Suppose first that $\alpha<0$ and $\gamma'$ is injective. 
Then, using
Proposition~\ref{prop:sigma-alpha-stability}, $\sigma(E)$
is an $-\alpha$-semistable $\U(p,q)$-Higgs bundle and similarly for
$\sigma(E')$. Moreover, the $\beta$-map (which is
$\sigma(\gamma')$) of $\sigma(E')$ is injective.
Observe that
\begin{displaymath}
  \mathpzc{Hom}^\bullet(\sigma(E'),\sigma(E)) \cong
  \mathpzc{Hom}^\bullet(E',E).
\end{displaymath}
Hence, noting that $-\alpha>0$, the conclusion
follows from the previous case applied to the pair $(\sigma(E'),\sigma(E))$.

Next suppose that $\alpha<0$ and $\gamma$ is surjective. 
Then the dual $\U(p,q)$-Higgs bundle $E^*$ is $-\alpha$-semistable,
and similarly for $E'^*$. Moreover, the $\beta$-map (which is
$\gamma^*$) of $E^*$ is injective. Observe that
\begin{displaymath}
  \mathpzc{Hom}^\bullet(E^*,E'^*) \cong
  \mathpzc{Hom}^\bullet(E',E).
\end{displaymath}
Hence again the conclusion follows from the previous case, applied to
the pair $(E^*,E'^*)$.

The final case, $\alpha>0$ and $\beta$ surjective, follows in a
similar way, combining the two previous constructions.
\end{proof}

In the case when $q=1$ we can improve on 
Proposition~\ref{vanishing main}, as follows.
\begin{proposition}\label{q=1}
  Let $E$ be an $\alpha$-semistable $L$-twisted $\U(p,1)$-Higgs bundle
  with $p\geq 2$. Assume that $\deg(L)>2g-2$. Then
  $\mathbb{H}^2(\mathpzc{End}^\bullet(E))=0$ for all $\alpha$ in the
  range
\begin{displaymath}
   p(\mu(V)-\mu(W)) - (p+1)(\deg(L)-2g+2) < \alpha < p(\mu(V)-\mu(W)) + (p+1)(\deg(L)-2g+2).
\end{displaymath}
\end{proposition}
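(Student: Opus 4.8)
The plan is to reduce the vanishing to a non-existence statement for morphisms, and then to analyse that statement rank by rank, exploiting that $q=1$. By Lemma~\ref{lem:H2-is-H0} applied with $E'=E$, there is an isomorphism
$\HH^2(\mathpzc{End}^\bullet(E))\cong\HH^0(\mathpzc{Hom}^\bullet(E,E''))^*$, where $E''=\sigma(E)\otimes L^{-1}K=(W\otimes L^{-1}K,\,V\otimes L^{-1}K,\,\gamma\otimes 1,\,\beta\otimes 1)$. Thus it suffices to show that the only morphism $f=(f_1,f_2)\colon E\to E''$ is $f=0$, where $f_1\colon V\to W\otimes L^{-1}K$ and $f_2\colon W\to V\otimes L^{-1}K$. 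Since $q=1$, both $W\otimes L^{-1}K$ and $W$ are line bundles, so $\rk\im(f_1)\leq 1$ and $\rk\im(f_2)\leq 1$; hence for $f\neq 0$ the quantity $\lambda(f)$ of Lemma~\ref{lem:lambda-f} takes one of the values $0,\tfrac12,1$. If $\lambda(f)=\tfrac12$, then \eqref{eq:h2-vanishing-ineq} reads $2g-2-\deg(L)\geq 0$, contradicting $\deg(L)>2g-2$; so this case cannot occur, and it remains to exclude $\lambda(f)\in\{0,1\}$ for $\alpha$ in the stated range.

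Next I would treat $\lambda(f)=1$, i.e.\ $f_1\neq 0$ and $f_2=0$. Writing out that $f$ is a morphism (vanishing of $a_0$) forces $(\beta\otimes 1)\circ f_1=0$; since $\beta\colon W\to V\otimes L$ is a map out of a line bundle, it is injective unless it is zero, so $\beta\otimes 1$ would be injective and kill $f_1$ — hence $\beta=0$. I then take $N=(\ker f_1,W)\subset E$, which is an admissible subobject precisely because $\beta=0$ (and $\gamma(\ker f_1)\subseteq W\otimes L$ trivially); it has rank $p<p+1$, so it is proper and non-trivial as $p\geq 2$. Because $\im(f_1)$ is a subsheaf of the line bundle $W\otimes L^{-1}K$, its degree is at most $b+2g-2-\deg(L)$ (with $b=\deg W$), whence $\deg(\ker f_1)\geq a-b+\deg(L)-2g+2$ (with $a=\deg V$). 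Substituting into $\mu_\alpha(N)\leq\mu_\alpha(E)$, using $\mu_\alpha(N)=\mu(N)+\alpha\tfrac{p-1}{p}$, and simplifying gives $\alpha\geq a-pb+(p+1)(\deg(L)-2g+2)=p(\mu(V)-\mu(W))+(p+1)(\deg(L)-2g+2)$, contradicting the upper bound of the asserted interval.

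The case $\lambda(f)=0$ ($f_1=0$, $f_2\neq 0$) is symmetric. The morphism condition now forces $(f_2\otimes 1)\circ\gamma=0$, and since $f_2$ is a nonzero map out of the line bundle $W$ it is injective, so $\gamma=0$. Twisting $f_2$ by $LK^{-1}$ produces a nonzero, hence injective, map $W\otimes LK^{-1}\to V$, whose saturation is a line subbundle $V'\subset V$ with $\deg(V')\geq b+\deg(L)-2g+2$. Since $\gamma=0$, the pair $N=(V',0)$ is a proper non-trivial subobject of $E$, and $\mu_\alpha(N)=\deg(V')+\alpha$. The inequality $\mu_\alpha(N)\leq\mu_\alpha(E)$ then yields $\alpha\leq a-pb-(p+1)(\deg(L)-2g+2)=p(\mu(V)-\mu(W))-(p+1)(\deg(L)-2g+2)$, contradicting the lower bound of the interval. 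Hence no nonzero $f$ exists for $\alpha$ in the range, and $\HH^2(\mathpzc{End}^\bullet(E))=0$.

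The main obstacle, and the reason a direct appeal to Lemma~\ref{lem:lambda-f} is insufficient, is that the generic semistability estimate only excludes the symmetric interval of half-width $\deg(L)-2g+2$ centred at $\alpha=0$, whereas the asserted interval is centred at $p(\mu(V)-\mu(W))$ and has the larger half-width $(p+1)(\deg(L)-2g+2)$. All the extra leverage comes from the extreme cases $\lambda(f)\in\{0,1\}$, where the forced vanishing of $\beta$ (respectively $\gamma$) lets one replace the crude image-rank bound by a sharp degree estimate on a rank-one (co)kernel. I expect the delicate part to be exactly this degree bookkeeping: checking that the subobjects $(\ker f_1,W)$ and $(V',0)$ are genuinely admissible once $\beta$ or $\gamma$ vanishes, and verifying that the two resulting bounds on $\alpha$ reproduce the stated endpoints precisely.
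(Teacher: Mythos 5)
Your proof is correct, and it reaches the stated interval by the same two key degree estimates as the paper, but packaged along a genuinely different route. The paper first reduces to $\alpha\geq 0$ by passing to the dual Higgs bundle; then, for $\beta\neq 0$ (automatically injective since $q=1$) it invokes Proposition~\ref{vanishing main}, while for $\beta=0$ it splits $\mathpzc{End}^\bullet(E)$ into the deformation complex of the triple $\gamma\colon V\to W\otimes L$ plus the summand $H^1(\Hom(W,V)\otimes L)$, quotes the known $\HH^2$-vanishing for $\alpha$-semistable triples, and bounds the leftover summand by exactly your $\lambda(f)=1$ computation: the subobject $(\ker f,W,0,\gamma)$ and the inequalities \eqref{eq:7} and \eqref{eq:6}. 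You instead stay entirely on the Serre-dual side via Lemma~\ref{lem:H2-is-H0} and run the trichotomy $\lambda(f)\in\{0,\tfrac12,1\}$, which is available precisely because $q=1$: Lemma~\ref{lem:lambda-f} kills $\lambda=\tfrac12$ outright under $\deg(L)>2g-2$, and the two extreme cases are handled by forcing $\beta=0$ (resp.\ $\gamma=0$) from the morphism equations and then extracting the endpoint bounds from the subobjects $(\ker f_1,W)$ and $(V',0)$; your $\lambda=0$ case is the paper's dualization step carried out by hand. What your organization buys is self-containedness — no appeal to the triples literature and no separate duality reduction — at the modest cost of re-deriving, inside the $\lambda=\tfrac12$ and $\lambda=1$ cases, what the triple decomposition handles by citation. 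The admissibility checks and the degree bookkeeping you flagged as delicate all come out right and reproduce the endpoints $p(\mu(V)-\mu(W))\pm(p+1)(\deg(L)-2g+2)$ exactly.
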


\begin{proof}
  Assume first that $\alpha\geq 0$. Note that an isomorphism as in (B) of the
  hypothesis of Proposition~\ref{vanishing main} cannot exist when
  $p\neq q$. Hence the proposition immediate gives the result if
  $\alpha=0$. Moreover, if $\beta\neq 0$, then it is injective, and
  hence $\mathbb{H}^2(\mathpzc{Hom}^\bullet(E',E))=0$ by (2) of the
  proposition.  We may thus assume that $\beta=0$ and consider the
  $L$-twisted triple $E_T\colon \gamma\colon V\to W\otimes L$. We have
  that
  \begin{displaymath}
    \HH^2(\mathpzc{End}^\bullet(E)) =
    \HH^2(\mathpzc{End}^\bullet(E_T)) \oplus H^1(\Hom(W,V)\otimes L),
  \end{displaymath}
  where $\mathpzc{End}^\bullet(E_T)$ is the deformation complex of the
  triple. The vanishing of $\HH^2(\mathpzc{End}^\bullet(E_T))$ for an
  $\alpha$-semistable triple when $\alpha>0$ is well
  known\footnote{Note that the stability parameter for the
    corresponding untwisted triple as considered in
    \cite{bradlow-garcia-prada-gothen:2004} is
    $\alpha+\deg(L)$.}
  (cf.~\cite{bradlow-garcia-prada-gothen:2004}). Hence it remains to
  show that $H^1(\Hom(W,V)\otimes L) = 0$ which, by Serre duality, is
  equivalent to the vanishing
  \begin{displaymath}
    H^0(\Hom(V,W)\otimes L^{-1}K) = 0.
  \end{displaymath}
  So assume we have a non-zero $f\colon V \to W\otimes L^{-1}K$. Then
  $f$ induces as non-zero map of line bundles $f\colon V/\ker(f) \to
  W\otimes L^{-1}K$ and hence
  \begin{equation}
    \label{eq:7}
    \deg(W) - \deg(L) + 2g-2 \geq \deg(V) - \deg(\ker(f)).
  \end{equation}
  On the other hand, since $\beta=0$ we can consider the subobject
  $(\ker(f),W,0,\gamma)$ of $E$ and hence, by $\alpha$-semistability,
  \begin{align}
    \mu_\alpha(\ker(f)\oplus W) &\leq \mu_\alpha(V \oplus W) \notag \\
    \iff (p+1)\deg(\ker(f)) + \deg(W) &\leq p\deg(V)+\alpha,
    \label{eq:6}                                      
  \end{align}
  where we have used that $\rk(\ker(f)) = p-1$ and $\rk(W)=1$. 
  Now combining \eqref{eq:7} and \eqref{eq:6} we obtain
  \begin{displaymath}
    \alpha \geq p(\mu(V)-\mu(W)) + (p+1)(\deg(L)-2g+2).
  \end{displaymath}
  This establishes the vanishing of $\HH^2$ for $\alpha$ in the range
  \begin{displaymath}
    0\leq \alpha < p(\mu(V)-\mu(W)) + (p+1)(\deg(L)-2g+2).
  \end{displaymath}
  On the other hand, if $\alpha\leq 0$, applying the preceding result
  to the dual twisted $\U(p,q)$-Higgs bundle
  $(V^*,W^*,\gamma^*,\beta^*)$ gives vanishing of $\HH^2$ for $\alpha$
  in the range 
  \begin{displaymath}
    0\geq \alpha > p(\mu(V)-\mu(W)) - (p+1)(\deg(L)-2g+2).
  \end{displaymath}
  This finishes the proof.
\end{proof}

In general the preceding proposition does not guarantee vanishing of
$\HH^2$ for all values of the parameter $\alpha$. But for some values
of the topological invariants, the upper bound of the preceding
proposition is actually larger than the maximal value for the
parameter $\alpha$. More precisely, we have the following result.

\begin{proposition}
  \label{prop:q=1-range}
  Let $E$ be an $\alpha$-semistable $L$-twisted $\U(p,1)$-Higgs bundle
  with $p\geq 2$. Assume that $\deg(L)>2g-2$. We have the following:
  \begin{itemize}
\item[(1)] 
    If $p\bigl(\mu(V)-\mu(W)\bigr) > 2g-2-(p-2)\bigl(\deg(L)-(2g-2)\bigr)$ then $\mathbb{H}^2(\mathpzc{End}^\bullet(E))=0$ for all $\alpha\geq 0$\\
    \item[(2)] If $p\bigl(\mu(V)-\mu(W)\bigr) < -2g+2+(p-2)\bigl(\deg(L)-(2g-2)\bigr)$ then $\mathbb{H}^2(\mathpzc{End}^\bullet(E))=0$ for all $\alpha\leq 0$
      \end{itemize}
\end{proposition}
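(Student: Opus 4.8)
The plan is to combine the vanishing range of Proposition~\ref{q=1} with the a priori bound on the stability parameter coming from Proposition~\ref{bound for alpha}. Throughout, write $m=\mu(V)-\mu(W)$, $d=\deg(L)$ and $e=2g-2$, so that the hypotheses read $pm>(p-1)e-(p-2)d$ in case (1) and $pm<(p-2)d-(p-1)e$ in case (2), while the upper end of the vanishing range in Proposition~\ref{q=1} is $pm+(p+1)(d-e)$. The key observation is that from the proof of Proposition~\ref{q=1} one already has, for $\alpha\geq 0$, that $\HH^2(\mathpzc{End}^\bullet(E))=0$ as soon as $\alpha<pm+(p+1)(d-e)$; hence for part (1) it suffices to show that every admissible $\alpha\geq 0$ satisfies this strict inequality, and for this it is enough to bound $\alpha_M$.

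For part (1) I would take $E$ to be $\alpha$-semistable with $\alpha\geq 0$. By Proposition~\ref{bound for alpha} we have $\alpha\leq\alpha_M$, so $\alpha_M\geq 0$. Since $d>e\geq 0$ (using $\deg(L)>2g-2$ and $g\geq 1$), the alternative $m\geq d$ would force $\alpha_M=-m\leq -d<0$, a contradiction; hence $m<d$, and Proposition~\ref{bound for alpha} gives $\alpha_M=-\tfrac{2p}{p-1}m+\tfrac{p+1}{p-1}d$. A direct computation then yields
\[
  pm+(p+1)(d-e)-\alpha_M=\frac{p+1}{p-1}\bigl(pm+(p-2)d-(p-1)e\bigr),
\]
so that, as $\tfrac{p+1}{p-1}>0$, the inequality $\alpha_M<pm+(p+1)(d-e)$ is \emph{equivalent} to the hypothesis $pm>(p-1)e-(p-2)d$ of part (1). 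Consequently $\alpha\leq\alpha_M<pm+(p+1)(d-e)$, and Proposition~\ref{q=1} gives $\HH^2(\mathpzc{End}^\bullet(E))=0$.

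For part (2) I would argue by duality. The dual $E^*=(V^*,W^*,\gamma^*,\beta^*)$ is again an $L$-twisted $\U(p,1)$-Higgs bundle, with $\mu(V^*)-\mu(W^*)=-m$, and by Remark~\ref{rem:upq-duality} it is $(-\alpha)$-semistable, where $-\alpha\geq 0$ when $\alpha\leq 0$. Moreover $\mathpzc{End}^\bullet(E^*)\cong\mathpzc{End}^\bullet(E)$ (the same identification used in the proof of Proposition~\ref{vanishing main}), so their second hypercohomology groups coincide. The hypothesis $pm<(p-2)d-(p-1)e$ of part (2) is exactly the hypothesis of part (1) applied to $E^*$, so part (1) gives $\HH^2(\mathpzc{End}^\bullet(E^*))=0$, hence $\HH^2(\mathpzc{End}^\bullet(E))=0$, for all $\alpha\leq 0$. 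I expect the only genuine content to be the algebraic identity displayed above, which matches the stated hypothesis precisely with the bound $\alpha_M<pm+(p+1)(d-e)$; the rest is bookkeeping with the constraints already in hand, together with the small point that $\alpha_M\geq 0$ rules out the degenerate branch $m\geq d$.
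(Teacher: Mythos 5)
Your proof is correct and follows essentially the same route as the paper: both reduce the claim to checking that the extreme admissible parameter value ($\alpha_M$, resp.\ $\alpha_m$) lies strictly inside the vanishing range of Proposition~\ref{q=1}, and your identity $pm+(p+1)(d-e)-\alpha_M=\frac{p+1}{p-1}\bigl(pm+(p-2)d-(p-1)e\bigr)$ is exactly the ``simple check'' the paper leaves implicit. Your handling of part (2) by dualizing rather than comparing $\alpha_m$ with the lower endpoint is only a cosmetic variation (the same duality already underlies the $\alpha\leq 0$ half of Proposition~\ref{q=1}), and your extra care in ruling out the branch $\mu(V)-\mu(W)\geq\deg(L)$ of $\alpha_M$ is a welcome refinement of the paper's argument.
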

\begin{proof}
  The upper and lower bound for $\alpha$ given in Proposition~\ref{bound for
    alpha} is, in this case
  \begin{displaymath}
    \alpha_M = -\frac{2p}{p-1}\bigl(\mu(V)-\mu(W)\bigr)
    +\frac{p+1}{p-1}\deg(L),
  \end{displaymath}
  \begin{displaymath}
    \alpha_m = -\frac{2p}{p-1}\bigl(\mu(V)-\mu(W)\bigr)
    -\frac{p+1}{p-1}\deg(L).
  \end{displaymath}
  It is simple to check that the inequalities of the statements are
  equivalent to $\alpha_M$ being less than the upper bound and $\alpha_m$ being bigger than the lower bound for
  $\alpha$ of Proposition~\ref{q=1}.
\end{proof}



The following trivial observation is sometimes useful. 

\begin{proposition}
  \label{prop:end-hom-vanishing}
  Let $E$ and $E'$ be $L$-twisted $\U(p,q)$-Higgs bundles such that
  $\HH^2(\mathpzc{End}^\bullet(E\oplus E'))=0$. Then
  \begin{displaymath}
    \mathbb{H}^2(\mathpzc{Hom}^\bullet(E',E)) =
    \mathbb{H}^2(\mathpzc{Hom}^\bullet(E,E')) = 0.
  \end{displaymath}
\end{proposition}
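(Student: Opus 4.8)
The plan is to reduce everything to the direct sum decomposition of the $\Hom$-complex recorded in Remark~\ref{rem:direct-sum-deform}. First I would set $\mathcal{E}=E\oplus E'$ and note that $\mathpzc{End}^\bullet(\mathcal{E})=\mathpzc{Hom}^\bullet(\mathcal{E},\mathcal{E})$. Applying Remark~\ref{rem:direct-sum-deform} to the two summands $E$ and $E'$ yields an isomorphism of complexes
\[
  \mathpzc{End}^\bullet(E\oplus E') \cong
  \mathpzc{End}^\bullet(E)\oplus\mathpzc{End}^\bullet(E')
  \oplus\mathpzc{Hom}^\bullet(E',E)\oplus\mathpzc{Hom}^\bullet(E,E').
\]

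Next I would pass to hypercohomology. Since hypercohomology is additive with respect to finite direct sums of complexes, taking $\HH^2$ of both sides gives
\[
  \HH^2(\mathpzc{End}^\bullet(E\oplus E')) \cong
  \HH^2(\mathpzc{End}^\bullet(E))\oplus\HH^2(\mathpzc{End}^\bullet(E'))
  \oplus\HH^2(\mathpzc{Hom}^\bullet(E',E))\oplus\HH^2(\mathpzc{Hom}^\bullet(E,E')).
\]
The hypothesis is precisely that the left-hand side vanishes, and a finite direct sum of vector spaces is zero only if every summand is zero. Hence each of the four summands on the right vanishes; in particular the two cross terms $\HH^2(\mathpzc{Hom}^\bullet(E',E))$ and $\HH^2(\mathpzc{Hom}^\bullet(E,E'))$ are zero, which is the desired conclusion.

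There is essentially no obstacle here, consistent with the statement being a trivial observation: the entire content is the functorial direct sum decomposition of the deformation complex together with the additivity of hypercohomology. The only point worth checking with any care is that the isomorphism of Remark~\ref{rem:direct-sum-deform} is genuinely an isomorphism of complexes (that is, compatible with the differential $a_0$), so that it induces the displayed splitting on $\HH^2$; but this is immediate from the block form of the differential on a direct sum, and is exactly what Remark~\ref{rem:direct-sum-deform} asserts.
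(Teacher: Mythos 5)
Your proof is correct and is exactly the paper's argument: the paper's own proof reads ``Immediate in view of Remark~\ref{rem:direct-sum-deform},'' and your write-up simply spells out that the four-fold direct sum decomposition of $\mathpzc{End}^\bullet(E\oplus E')$ passes to $\HH^2$, whose vanishing forces each summand to vanish. Nothing further is needed.
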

\begin{proof}
  Immediate in view of Remark~\ref{rem:direct-sum-deform}.
\end{proof}

We can summarize our main results on vanishing of $\HH^2$ as follows.

\begin{lemma}
  \label{lem:h2-vanishing-final}
Fix a type $t=(p,q,a,b)$ and let $E$ be an $\alpha$-semistable $L$-twisted
$\U(p,q)$-Higgs bundle of type $t$ with $\deg(L)\geq 2g-2$. If
$\deg(L)=2g-2$ assume moreover that $E$ is $\alpha$-stable.
If either one of the following conditions holds:
\begin{itemize}
\item[$(1)$] $q=1$, $p\geq 2$ and $p(a/p-b/q) - \deg(L)(p+1) < \alpha < p(a/p-b/q) + \deg(L)(p+1)$,
\item[$(2)$] $a/p-b/q>-\deg(L)$ and $0\leq\alpha<\frac{2pq}{\mathrm{min}\{p,q\}|p-q|+p+q}\big(b/q-a/p-\deg(L)\big)+\deg(L)$,
\item[$(3)$] $a/p-b/q<\deg(L)$ and $\frac{2pq}{\mathrm{min}\{p,q\}|p-q|+p+q}(b/q-a/p+\deg(L))-\deg(L)<\alpha\leq 0$.
\end{itemize}
Then $\mathbb{H}^2(\mathpzc{End}^\bullet(E))$ vanishes.
\end{lemma}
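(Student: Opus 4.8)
The plan is to set $E'=E$, so that $\mathpzc{End}^\bullet(E)=\mathpzc{Hom}^\bullet(E,E)$ and the requirement $\mu_\alpha(E)=\mu_\alpha(E')$ holds trivially, and then to deduce each of the three cases from the vanishing results already established. Case $(1)$ requires nothing new: recalling $\mu(V)=a/p$ and $\mu(W)=b/q$ (with $q=1$), the stated range for $\alpha$ is exactly the one in Proposition~\ref{q=1}, so that proposition gives the conclusion directly. For cases $(2)$ and $(3)$ the task is to translate the given parameter bounds into injectivity or surjectivity of the Higgs fields via Proposition~\ref{surjective} and Corollary~\ref{Cor:Surjective}, and then to feed this into the appropriate clause of Proposition~\ref{vanishing main}.

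I would treat case $(2)$ first, splitting according to whether $p\geq q$ or $p\leq q$ (the two regimes agree when $p=q$). Rewriting the denominator $\min\{p,q\}\abs{p-q}+p+q$ as $pq-q^2+p+q$ when $p\geq q$ and as $pq-p^2+p+q$ when $p\leq q$, the upper bound for $\alpha$ becomes exactly $\alpha_0$ of Proposition~\ref{surjective}$(i)$ in the first regime and exactly $\alpha_t'$ of Corollary~\ref{Cor:Surjective}$(ii)$ in the second; in both, the standing hypothesis $a/p-b/q>-\deg(L)$ is the one those results require. Hence for $0<\alpha$ below this bound the field $\beta$ is injective (when $p\geq q$) or surjective (when $p\leq q$). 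Case $(3)$ is entirely symmetric: the lower bound for $\alpha$ equals $\alpha_0'$ of Proposition~\ref{surjective}$(iii)$ when $p\leq q$ and $\alpha_t$ of Corollary~\ref{Cor:Surjective}$(i)$ when $p\geq q$, so for $\alpha<0$ above this bound $\gamma$ is injective (when $p\leq q$) or surjective (when $p\geq q$).

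With $E'=E$ these facts match precisely the alternatives in Proposition~\ref{vanishing main}: the case $\alpha=0$ is clause $(1)$; under $(2)$ with $\alpha>0$, injectivity of $\beta=\beta'$ or surjectivity of $\beta$ triggers clause $(2)$; and under $(3)$ with $\alpha<0$, injectivity of $\gamma=\gamma'$ or surjectivity of $\gamma$ triggers clause $(3)$. It then remains only to verify the global hypothesis of Proposition~\ref{vanishing main}: when $\deg(L)>2g-2$ this is hypothesis (A), and when $\deg(L)=2g-2$ we invoke hypothesis (B), using the standing assumption that $E$ is $\alpha$-stable.

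The one step that is not purely mechanical is the verification of hypothesis (B) in the boundary case $\deg(L)=2g-2$, which demands that there be no isomorphism $f\colon E\xra{\cong}E''$ with $E''=\sigma(E)\otimes L^{-1}K$. Since $\deg(L^{-1}K)=0$, the bundle $E''$ has type $(q,p,b,a)$, so when $p\neq q$ the types of $E$ and $E''$ cannot coincide and no such isomorphism exists. The delicate point is $p=q$, where the types do match; here I would argue that a nonzero $f$ would, by the proof of Lemma~\ref{lem:lambda-f}, force $\lambda(f)=1/2$ together with $f$ being an isomorphism, and I expect this to be incompatible with the injectivity or surjectivity of $\beta$ and $\gamma$ just established (together with $\alpha$-stability). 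This exclusion is where I anticipate the real work; the remainder is the routine bookkeeping of matching the four sub-cases $p\gtrless q$ to the algebraic forms of the bounds.
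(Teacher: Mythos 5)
Your proof follows the paper's own argument exactly: case (1) is cited to Proposition~\ref{q=1}, and cases (2) and (3) are obtained by matching the parameter bounds to $\alpha_0$, $\alpha_0'$, $\alpha_t$, $\alpha_t'$ of Proposition~\ref{surjective} and Corollary~\ref{Cor:Surjective} and feeding the resulting injectivity or surjectivity of $\beta$ or $\gamma$ into Proposition~\ref{vanishing main}; you are in fact more explicit than the paper's one-line proof, correctly flagging the only delicate point (excluding an isomorphism $E\cong\sigma(E)\otimes L^{-1}K$ in hypothesis (B) when $\deg(L)=2g-2$ and $p=q$), which the paper passes over in silence. The one inaccuracy is your claim that the range in case (1) is \emph{exactly} that of Proposition~\ref{q=1}: that proposition yields the half-width $(p+1)\bigl(\deg(L)-2g+2\bigr)$ rather than $(p+1)\deg(L)$, so the stated range is strictly wider for $g\geq 2$ --- but this discrepancy is inherited from the paper itself, whose proof of part (1) is likewise nothing more than a citation of Proposition~\ref{q=1}.
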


\begin{proof}
For part $(1)$, use Proposition~\ref{q=1}. The other parts  follow from
Proposition~\ref{surjective}, Corollary~\ref{Cor:Surjective}, and Proposition~\ref{vanishing main}.
\end{proof}

\subsection{Moduli space of twisted $\U(p,q)$-Higgs bundles}
\label{sec:moduli-space-twisted}

Finally, we are in a position to make statements about smoothness of
the moduli space.  Recall that we denote
the moduli space of $\alpha$-polystable twisted $\U(p,q)$-Higgs
bundles with type $t=(p,q,a,b)$ by
$$
\mathcal{M}_\alpha(t)=\mathcal{M}_\alpha(p,q,a,b),
$$ 
and the moduli space of $\alpha$-stable twisted $\U(p,q)$-Higgs bundle
by $\mathcal{M}_\alpha^s(t)\subset\mathcal{M}_\alpha(t)$.

\begin{proposition}\label{smooth}
Fix a type $t=(p,q,a,b)$. 
If either one of the following conditions holds:
\begin{itemize}
\item[$(1)$] $q=1$, $p\geq 2$ and $p(a/p-b/q) - \deg(L)(p+1) < \alpha < p(a/p-b/q) + \deg(L)(p+1)$,
\item[$(2)$] $a/p-b/q>-\deg(L)$ and $0\leq\alpha<\frac{2pq}{\mathrm{min}\{p,q\}|p-q|+p+q}\big(b/q-a/p-\deg(L)\big)+\deg(L)$,
\item[$(3)$] $a/p-b/q<\deg(L)$ and $\frac{2pq}{\mathrm{min}\{p,q\}|p-q|+p+q}(b/q-a/p+\deg(L))-\deg(L)<\alpha\leq 0$.
\end{itemize}
Then the moduli space $\mathcal{M}_\alpha^s(t)$ is smooth. 
\end{proposition}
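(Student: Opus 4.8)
The plan is to prove smoothness pointwise, exploiting that smoothness of $\mathcal{M}^s_\alpha(t)$ is a local property: it suffices to verify that the moduli space is smooth in a neighbourhood of each of its points. The key reduction is provided by Proposition~\ref{smooth0}(2), which asserts that for an $\alpha$-stable twisted $\U(p,q)$-Higgs bundle $E$ of type $t$, the vanishing $\HH^2(\mathpzc{End}^\bullet(E))=0$ already guarantees that $\mathcal{M}^s_\alpha(t)$ is smooth at the point $[E]$ (and pins down its dimension). Thus the entire proposition reduces to establishing this $\HH^2$-vanishing at every point of the stable locus.

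Concretely, I would fix an arbitrary point of $\mathcal{M}^s_\alpha(t)$ and choose an $\alpha$-stable representative $E=(V,W,\beta,\gamma)$; by construction every point of $\mathcal{M}^s_\alpha(t)$ is represented by such an $\alpha$-stable bundle. I would then invoke Lemma~\ref{lem:h2-vanishing-final}, whose three hypotheses $(1)$--$(3)$ are verbatim those of the present proposition, to deduce $\HH^2(\mathpzc{End}^\bullet(E))=0$. Since $E$ is $\alpha$-stable it is in particular $\alpha$-semistable, so the semistability input of that lemma is satisfied; and the boundary subcase $\deg(L)=2g-2$ of Lemma~\ref{lem:h2-vanishing-final}, which additionally demands $\alpha$-stability, is automatically covered here because we are working on the stable locus. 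Feeding this vanishing into Proposition~\ref{smooth0}(2) yields smoothness of $\mathcal{M}^s_\alpha(t)$ at $[E]$, and letting $[E]$ range over all of $\mathcal{M}^s_\alpha(t)$ produces smoothness everywhere.

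The main thing to attend to is not a genuine obstacle but a bookkeeping verification: one must ensure that the standing requirement $\deg(L)\geq 2g-2$ of Lemma~\ref{lem:h2-vanishing-final} is in force (it enters through Proposition~\ref{vanishing main} and Proposition~\ref{q=1}, both of which rest on this degree bound), and that the $\alpha$-stability built into the definition of $\mathcal{M}^s_\alpha(t)$ is precisely what renders the $\deg(L)=2g-2$ case of the lemma applicable. Once these hypotheses are checked, the proposition follows immediately by combining Lemma~\ref{lem:h2-vanishing-final} with Proposition~\ref{smooth0}(2), and no further substantive argument is needed; all the real work has already been absorbed into the $\HH^2$-vanishing results.
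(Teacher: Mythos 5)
Your proposal is correct and coincides with the paper's own proof, which simply combines Lemma~\ref{lem:h2-vanishing-final} (giving $\HH^2(\mathpzc{End}^\bullet(E))=0$ at every $\alpha$-stable point) with Proposition~\ref{smooth0}(2). Your additional remark about the standing hypothesis $\deg(L)\geq 2g-2$ and the automatic handling of the $\deg(L)=2g-2$ boundary case on the stable locus is a sensible bookkeeping point that the paper leaves implicit.
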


\begin{proof}
Combine Lemma~\ref{lem:h2-vanishing-final} and Proposition~\ref{smooth0}. 
\end{proof}

\section{Crossing critical values}
\label{sec:crossing}

\subsection{Flip loci}\label{Flip loci}
In this section we study the variation with $\alpha$ of the moduli
spaces $\mathcal{M}_\alpha^s(t)$ for fixed type $t=(p,q,a,b)$. We are
using a method similar to the one for chains given in
\cite{Schmitt:2006}, which in turn is based on
\cite{bradlow-garcia-prada-gothen:2004}.

Let $\alpha_c$ be a critical value. We adopt the following
notation:
$$
\alpha_c^+=\alpha_c+\epsilon,\mbox{ }
\alpha_c^-=\alpha_c-\epsilon,
$$
where $\epsilon>0$ is small enough so that $\alpha_c$ is the only
critical value in the interval $(\alpha_c^-, \alpha_c^+)$. We begin
with a set theoretic description of the differences between two spaces
$\mathcal{M}_{\alpha_c^+}$ and $\mathcal{M}_{\alpha_c^-}$.

\begin{definition}
We define \emph{flip loci} $\mathcal{S}_{\alpha_c^{\pm}}
\subset\mathcal{M}^s_{\alpha_c^{\pm}}$ by the condition that the
points in $\mathcal{S}_{\alpha_c^+}$ represent twisted $\U(p,q)$-Higgs
bundles which are $\alpha_c^+$-stable but $\alpha_c^-$-unstable, and
analogously for $\mathcal{S}_{\alpha_c^-}$.
\end{definition}


A twisted $\U(p,q)$-Higgs bundle $E\in\mathcal{S}_{\alpha_c^{\pm}}$ is
strictly $\alpha_c$-semistable and so we can use the Jordan-H\"{o}lder filtrations of $E$ in order to estimate the codimension of $\mathcal{S}_{\alpha_c^{\pm}}$ in $\mathcal{M}_{\alpha_c^{\pm}}$. 
The following is an analogue for twisted $\U(p,q)$-Higgs bundles of
\cite[Proposition~4.3]{Schmitt:2006}, which is a result for chains.
\begin{proposition}\label{Prop:bound-for-dim}
Fix a type $t=(p,q,a,b)$. Let $\alpha_c$ be a critical value and let
$\mathcal{S}$ be a family of $\alpha_c$-semistable twisted
$\U(p,q)$-Higgs bundles $E$ of type $t$, all of them pairwise
non-isomorphic, and whose Jordan-H\"{o}lder filtrations has an
associated graded of the form $\mathrm{Gr}(E)=\bigoplus_{i=1}^mQ_i$, with $Q_i$ twisted $\U(p,q)$-Higgs bundle of type $t_i$. 
If either one of the following conditions holds:
\begin{itemize}
\item[$(1)$] $q=1$, $p\geq 2$ and $p(a/p-b/q) - \deg(L)(p+1) < \alpha_c < p(a/p-b/q) + \deg(L)(p+1)$,
\item[$(2)$] $a/p-b/q>-\deg(L)$ and $0\leq\alpha_c<\frac{2pq}{\mathrm{min}\{p,q\}|p-q|+p+q}\big(b/q-a/p-\deg(L)\big)+\deg(L)$,
\item[$(3)$] $a/p-b/q<\deg(L)$ and $\frac{2pq}{\mathrm{min}\{p,q\}|p-q|+p+q}(b/q-a/p+\deg(L))-\deg(L)<\alpha_c\leq 0$.
\end{itemize}
Then
\begin{equation}
  \label{bund}
  \dim \mathcal{S}\leq -\sum_{i\leq j}\chi(t_j,t_i)-\frac{m(m-3)}{2}.
\end{equation}
\end{proposition}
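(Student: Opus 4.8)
The plan is to parametrize $\mathcal{S}$ by iterated extensions of its Jordan--Hölder factors and bound its dimension by a cohomological count, following the scheme of \cite{Schmitt:2006} for chains. Since each $E\in\mathcal{S}$ is strictly $\alpha_c$-semistable with $\mathrm{Gr}(E)=\bigoplus_{i=1}^m Q_i$, where the $Q_i$ are $\alpha_c$-stable of the common $\alpha_c$-slope $\mu_{\alpha_c}(E)$, I would realize every such $E$ as a successive extension of the $Q_i$: the data consist of a point $(Q_1,\dots,Q_m)$ in $\prod_i\mathcal{M}^s_{\alpha_c}(t_i)$ together with the extension classes building up the filtration, and $\mathcal{S}$ is covered by the image of this data under passage to isomorphism classes. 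Thus $\dim\mathcal{S}$ is at most the dimension of the moduli of the pieces, plus the dimension of the extension data, minus the dimension of the identifications coming from automorphisms of $\mathrm{Gr}(E)$ and from the choice of filtration.

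The key cohomological inputs are as follows. First, under the hypotheses $(1)$--$(3)$ the second hypercohomology of the deformation complex vanishes, so by Proposition~\ref{smooth0} each moduli space of pieces has $\dim\mathcal{M}^s_{\alpha_c}(t_i)=1-\chi(t_i,t_i)$; here one must check that the stability range in $(1)$--$(3)$, phrased for the total type $t$, also forces $\HH^2(\mathpzc{End}^\bullet(Q_i))=0$, which I would obtain from Lemma~\ref{lem:h2-vanishing-final} together with the fact that the injectivity and surjectivity of the Higgs fields supplied by Proposition~\ref{surjective} and Corollary~\ref{Cor:Surjective} are inherited by the subquotients $Q_i$. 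Second, the extensions of $Q_j$ by $Q_i$ are classified by $\Ext^1(Q_j,Q_i)\cong\HH^1(\mathpzc{Hom}^\bullet(Q_j,Q_i))$ (Proposition~\ref{Hom and hyper}); since the $Q_i$ are $\alpha_c$-stable of the same slope, Proposition~\ref{vanishing} gives $\HH^0(\mathpzc{Hom}^\bullet(Q_j,Q_i))=0$ for $Q_i\not\cong Q_j$, and the same vanishing argument gives $\HH^2=0$, so that $\dim\Ext^1(Q_j,Q_i)=-\chi(t_j,t_i)$.

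Assembling the count, the moduli of the pieces contribute $\sum_i\bigl(1-\chi(t_i,t_i)\bigr)$ and the extension data contribute $\sum_{i<j}\bigl(-\chi(t_j,t_i)\bigr)$, using bilinearity of $\chi$ (Proposition~\ref{chi} and Lemma~\ref{extension}). The redundancy to be subtracted is the dimension of the orbits of the automorphism group of $\bigoplus_i Q_i$ acting on the extension data, together with the dimension of the variety of Jordan--Hölder flags realizing a fixed $E$. Following the bookkeeping of Schmitt, this redundancy totals $\binom{m}{2}=m+\tfrac{m(m-3)}{2}$; subtracting it and writing $-\sum_i\chi(t_i,t_i)-\sum_{i<j}\chi(t_j,t_i)=-\sum_{i\le j}\chi(t_j,t_i)$ yields exactly the asserted bound $\dim\mathcal{S}\le-\sum_{i\le j}\chi(t_j,t_i)-\tfrac{m(m-3)}{2}$.

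The main obstacle is precisely this last combinatorial accounting: establishing that the automorphisms of the graded object together with the filtration freedom reduce the naive count by exactly $\binom{m}{2}$, and in particular handling the strata where several of the $Q_i$ are isomorphic, so that the automorphism group of $\mathrm{Gr}(E)$ jumps from a torus to a product of general linear groups and the generic orbit and flag dimensions must be recomputed. A secondary technical point, which I would settle first, is the transfer of the $\HH^2$-vanishing hypotheses from the total type $t$ to all pairs $(t_i,t_j)$, since the dimension formulas for both $\mathcal{M}^s_{\alpha_c}(t_i)$ and $\Ext^1(Q_j,Q_i)$ rely on it.
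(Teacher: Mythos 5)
Your proposal follows essentially the same route as the paper's proof: parametrize $\mathcal{S}$ by the Jordan--H\"older factors together with (projectivized) extension classes, invoke the $\HH^2$-vanishing results (Proposition~\ref{vanishing main}, Proposition~\ref{q=1}, Proposition~\ref{prop:end-hom-vanishing}) to get constant dimensions $1-\chi(t_i,t_i)$ for the factor moduli and $-\chi(t_j,t_i)$ for the extension spaces, and defer the combinatorial bookkeeping for general $m$ to the induction of \cite{Schmitt:2006}. Your arithmetic identity $\binom{m}{2}=m+\tfrac{m(m-3)}{2}$ correctly reconciles the naive count with the stated bound, and the two caveats you flag (transferring $\HH^2$-vanishing to the subquotients, and the strata where factors coincide) are precisely the points the paper handles by citation rather than in detail.
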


\begin{proof}
Once appropriate vanishing of $\HH^2$ is ensured, the proof is similar
to the proof of \cite[Proposition~4.3]{Schmitt:2006};
we indicate the idea for $m=2$. In view of the definition of $\mathcal{S}$, there exists an injective canonical map $$i: \mathcal{S}\to\mathcal{M}^s_{\alpha_c}(t_1)\times \mathcal{M}^s_{\alpha_c}(t_2)$$
with $i^{-1}(Q_1,Q_2)\cong \mathbb{P}(\Ext^1(Q_2,Q_1))$, where $\mathbb{P}(\Ext^1(Q_2,Q_1))$ parametrizes equivalence classes of extensions 
$$0\rightarrow Q_1\rightarrow E \rightarrow Q_2\rightarrow 0.$$ Notice
that $Q_1$ and $Q_2$ satisfy the hypothesis of Proposition
$\ref{vanishing main}$ (or, in case $q=1$, Proposition~\ref{q=1}; cf.\ Proposition~\ref{prop:end-hom-vanishing}) and therefore, cf.\ Proposition $\ref{chi}$, $\dim(\mathbb{P}\Ext^1(Q_2,Q_1))$ is constant as $Q_1$ and $Q_2$ vary in their corresponding moduli spaces.  
 Hence, we
 obtain $$\dim\mathcal{S}\leq\dim\mathcal{M}_{\alpha_c}^s(t_1)+\dim\mathcal{M}_{\alpha_c}^s(t_2)+\dim\mathbb{P}(\Ext^1(Q_2,Q_1)).$$ 
The general case follows by induction on $m$ as in loc.~cit.
\end{proof}

In order to show that the flip loci $\mathcal{S}_{\alpha_c^{\pm}}$ has
positive codimension we need to bound the values of $\chi(t_i,t_j)$ in
$(\ref{bund})$. This is what we do next.

\subsection{Bound for $\chi$}
\label{sec:bound-chi}
Here we consider a $Q$-bundle associated to the complex $\mathpzc{Hom}^\bullet(E',E)$ and construct a solution to the vortex equations on this $Q$-bundle from solutions on $E'$ and $E$. The quiver $Q$ is the following:
\[
\xymatrix{
\bullet\ar@{<-}@/_1pc/[r]&\bullet\ar@{<-}@/_1pc/[l]\ar@{>}@/_1pc/[r]&\bullet\ar@{>}@/_1pc/[l]\\
}
\]
\\
The construction generalizes the one of \cite{bradlow-garcia-prada-gothen:2004} Lemma $4.2$. 
\subsubsection{The $Q$-bundle associated to $\mathpzc{Hom}^\bullet(E',E)$}
Let  $E=(V,W,\beta,\gamma)$ be a $L$-twisted $\U(p,q)$-Higgs bundle and $E'=(V',W',\beta',\gamma')$ a $L$-twisted $\U(p',q')$-Higgs bundle.
Let us consider the following twisted $Q$-bundle $\widetilde{E}$ (the morphisms are twisted by $L$ for each arrow):
\begin{equation}\label{associated}
\xymatrix{
\Hom(W',V)\ar@{<-}@/_2pc/[r]^{\phi_b}&\Hom(V',V)\oplus \Hom(W',W)\ar@{<-}@/_2pc/[l]_{\phi_d} \ar@{>}@/_2pc/[r]^{\phi_a}&\Hom(V',W)\ar@{>}@/_2pc/[l]_{\phi_c}\\
}
\end{equation}
where
\begin{eqnarray*}
\phi_a(f_1,f_2)&=&(f_2\otimes 1_L)\circ \gamma'-\gamma\circ f_1\mbox{ },\\
\phi_b(f_1,f_2)&=&(f_1\otimes 1_L)\circ \beta' -\beta\circ f_2\mbox{ },\\
\phi_c(g)&=&(\beta\circ g, (g\otimes 1_L)\circ \beta')\mbox{ },\\
\phi_d(h)&=&((h\otimes 1_L)\circ\gamma',\gamma\circ h).\\
\end{eqnarray*} 
We will write briefly as $\widetilde{E}$
\[
\xymatrix{
\mathpzc{Hom}^{12}\ar@{<-}@/_1pc/[r]^{\phi_b}_L&\mathpzc{Hom}^0\ar@{<-}@/_1pc/[l]_{\phi_d}^L \ar@{>}@/_1pc/[r]^{\phi_a}_L&\mathpzc{Hom}^{11}\ar@{>}@/_1pc/[l]_{\phi_c}^L\\
}.
\]
Note that $\mathpzc{Hom}^1=\mathpzc{Hom}^{11}\oplus
\mathpzc{Hom}^{12}$ and $a_0=(\phi_a,\phi_b)$, where $a_0:
\mathpzc{Hom}^0\to \mathpzc{Hom}^1$ is the Hom-complex
$(\ref{deformation complex})$. 

In this section, by using Proposition $\ref{solution}$, we prove that
if $E'$ and $E$ are $\alpha$-polystable then $\widetilde{E}$ is
$\boldsymbol{\alpha}$-polystable for a suitable choice of
$\boldsymbol{\alpha}$.

\begin{lemma}\label{vortex}
Let  $E=(V,W,\beta,\gamma)$ be a $L$-twisted $\U(p,q)$-Higgs bundle
and $E'=(V',W',\beta',\gamma')$ a $L$-twisted $\U(p',q')$-Higgs
bundle. Suppose, moreover, we have solutions to the $(\tau_1,\tau_2)$-vortex equations on $E$ and the $(\tau_1',\tau_2')$-vortex equations on $E'$ such that $\tau_1-\tau'_1=\tau_2-\tau'_2$. 
Then  the induced Hermitian metric on the $Q$-bundle $\widetilde{E}$ satisfies the vortex equations
\begin{eqnarray*}
\sqrt{-1}\Lambda F(\mathpzc{Hom}^{12})+\phi_b\phi_b^{\ast}-\phi_d^{\ast}\phi_d&=&\widetilde{\tau_2} \mathrm{Id}_{\mathpzc{Hom}^{12}},\\
\sqrt{-1}\Lambda F(\mathpzc{Hom}^0)+\phi_c\phi_c^{\ast}+\phi_d\phi_d^{\ast}-\phi_a^{\ast}\phi_a-\phi_b^{\ast}\phi_b&=&\widetilde{\tau_1} \mathrm{Id}_{\mathpzc{Hom}^0},\\
\sqrt{-1}\Lambda F(\mathpzc{Hom}^{11})+\phi_a\phi_a^{\ast}-\phi_c^{\ast}\phi_c&=&\widetilde{\tau_0}  \mathrm{Id}_{\mathpzc{Hom}^{11}}.
\end{eqnarray*}
For $\boldsymbol{\tau}=\big(\widetilde{\tau_0},\widetilde{\tau_1},\widetilde{\tau_2}\big)$ given by
\begin{eqnarray*}
\widetilde{\tau_0}&=&\tau_2-\tau'_1,\\
 \widetilde{\tau_1}&=&\tau_1-\tau_1'=\tau_2-\tau'_2, \\
 \widetilde{\tau_2}&=&\tau_1-\tau'_2.
\end{eqnarray*}
\end{lemma}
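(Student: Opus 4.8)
The plan is to directly verify each of the three vortex equations on $\widetilde{E}$ by computing the relevant curvature and the compositions $\phi_a\phi_a^\ast$, $\phi_b^\ast\phi_b$, etc., in terms of the corresponding quantities on $E$ and $E'$. The key input is that a Hermitian metric $H$ on $E$ and $H'$ on $E'$ induces in a canonical way a Hermitian metric on each of the bundles $\mathpzc{Hom}^{12}=\Hom(W',V)$, $\mathpzc{Hom}^0=\Hom(V',V)\oplus\Hom(W',W)$ and $\mathpzc{Hom}^{11}=\Hom(V',W)$, and that the curvature of the induced Chern connection on a $\Hom$-bundle $\Hom(A',A)$ is given by the standard formula $F(\Hom(A',A)) = F(A)\otimes 1 - 1\otimes F(A')^{\mathrm{t}}$ (suitably interpreted). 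Thus $\sqrt{-1}\Lambda F$ on each node is expressible through $\sqrt{-1}\Lambda F_{H_V}$, $\sqrt{-1}\Lambda F_{H_W}$ and their primed analogues, which are controlled by the two given sets of vortex equations on $E$ and $E'$.

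\textbf{Carrying out the node-by-node verification.}
First I would substitute the vortex equations \eqref{vortex equations} for $E$ (with parameters $\tau_1,\tau_2$) and for $E'$ (with parameters $\tau_1',\tau_2'$) into the curvature terms. For the middle node $\mathpzc{Hom}^0$, for instance, the $\Hom(V',V)$-summand contributes $\sqrt{-1}\Lambda F_{H_V}\otimes 1 - 1\otimes\sqrt{-1}\Lambda F_{H_V'}$, which after using the $V$-equation on $E$ and the $V$-equation on $E'$ becomes $(\tau_1-\tau_1')\Id$ plus the Higgs-field correction terms $\beta\beta^\ast$, $\gamma^\ast\gamma$, $\beta'\beta'^\ast$, $\gamma'^\ast\gamma'$ acting by pre- and post-composition; similarly for the $\Hom(W',W)$-summand with $\tau_2-\tau_2'$. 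The whole point of the computation is that these Higgs-field correction terms coming from the curvature are exactly matched, up to sign, by the operator terms $\phi_c\phi_c^\ast+\phi_d\phi_d^\ast-\phi_a^\ast\phi_a-\phi_b^\ast\phi_b$ appearing in the vortex equation at $\mathpzc{Hom}^0$. Using the hypothesis $\tau_1-\tau_1'=\tau_2-\tau_2'$, the constant part collapses to the single value $\widetilde{\tau_1}=\tau_1-\tau_1'=\tau_2-\tau_2'$, giving the middle equation. The same bookkeeping at $\mathpzc{Hom}^{11}=\Hom(V',W)$ yields the constant $\sqrt{-1}\Lambda F_{H_W}\otimes 1 - 1\otimes\sqrt{-1}\Lambda F_{H_V'}$ reducing to $\tau_2-\tau_1'=\widetilde{\tau_0}$, and at $\mathpzc{Hom}^{12}=\Hom(W',V)$ one gets $\tau_1-\tau_2'=\widetilde{\tau_2}$.

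\textbf{The main obstacle.}
The delicate step is the careful identification of the adjoint compositions: one must check that, for each arrow, the adjoint $\phi_a^\ast$, $\phi_b^\ast$, $\phi_c^\ast$, $\phi_d^\ast$ taken with respect to the induced metrics on the $\Hom$-bundles is computed correctly, and then that the combination of quadratic terms in $\beta,\gamma,\beta',\gamma'$ produced by the curvature formula cancels precisely against $\phi\phi^\ast-\phi^\ast\phi$ on each node. Concretely, one needs the adjoint of the operation ``post-compose with $\gamma$'' to be ``post-compose with $\gamma^\ast$'' and the adjoint of ``pre-compose with $\gamma'$ (twisted)'' to be ``pre-compose with $\gamma'^\ast$'', and these facts follow from the definition of the induced metric together with the fixed Hermitian metric $q$ on $L$; the adjoints $\beta^\ast,\gamma^\ast,\beta'^\ast,\gamma'^\ast$ appearing are exactly those used in \eqref{vortex equations}. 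Once these adjoint identities are in place the cancellation is a routine, if lengthy, algebraic verification; this is precisely the step that generalizes \cite[Lemma~4.2]{bradlow-garcia-prada-gothen:2004}, and I would organize the computation so that the correction terms are grouped node-by-node to make the cancellation transparent. Finally, the compatibility of the three constants $\widetilde{\tau_0},\widetilde{\tau_1},\widetilde{\tau_2}$ with the structure of the quiver $Q$ (as required for the vortex equations to be well-posed) is automatic from their definitions and the relation $\tau_1-\tau_1'=\tau_2-\tau_2'$.
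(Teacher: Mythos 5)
Your proposal is correct and follows essentially the same route as the paper: an explicit node-by-node verification using the curvature formula for the induced metrics on the $\Hom$-bundles, substitution of the two sets of vortex equations, explicit computation of the adjoints $\phi_a^\ast,\phi_b^\ast,\phi_c^\ast,\phi_d^\ast$, and the observation that the quadratic Higgs-field terms regroup into the left-hand sides of the vortex equations for $E$ and $E'$ acting by post- and pre-composition, with the hypothesis $\tau_1-\tau_1'=\tau_2-\tau_2'$ needed only to collapse the constant at the middle node. The paper simply carries out in full the "routine, if lengthy, algebraic verification" that you outline.
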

\begin{proof}
The vortex equations for $E$ and $E'$ are
\begin{align*}
\sqrt{-1}\Lambda F(V)+\beta\beta^\ast-\gamma^\ast\gamma
&=\tau_1 \mathrm{Id}_{V},\\
\sqrt{-1}\Lambda F(W)+\gamma\gamma^\ast-\beta^\ast\beta
&=\tau_2 \mathrm{Id}_{W},\\
\sqrt{-1}\Lambda F(V')+\beta'\beta'^\ast-\gamma'^\ast\gamma'
&=\tau_1' \mathrm{Id}_{V'},\\
\sqrt{-1}\Lambda F(W')+\gamma'\gamma'^\ast-\beta'^\ast\beta'
&=\tau_2'\mathrm{Id}_{W'}.
\end{align*}
We have
\begin{eqnarray*}
F(\mathpzc{Hom}^0)(\psi,\eta)&=&(F(V)\circ\psi-\psi\circ F(V'),F(W)\circ\eta-\eta\circ F(W')).
\end{eqnarray*}
Now we calculate $\phi_a^\ast$ and $\phi_b^\ast$: for $(f_1,f_2)\in \mathpzc{Hom}^0$, $g\in \mathpzc{Hom}^{11}$ and $h\in \mathpzc{Hom}^{12}$  we have,
\begin{eqnarray*}
\lefteqn{\left\langle  \phi_a^\ast(g),(f_1, f_2) \right\rangle _{\mathpzc{Hom}^0}}\\
&=&\left\langle g, \phi_a\big((f_1, f_2)\big) \right\rangle_{\mathpzc{Hom}^{11}}\\
&=& \left\langle g,(f_2\otimes 1_L)\circ\gamma'-\gamma\circ f_1\right\rangle_{\mathpzc{Hom}^{11}}\\
&=&\left\langle g,(f_2\otimes 1_L)\circ\gamma'\right\rangle_{C_{11}} - \left\langle g,\gamma\circ f_1\right\rangle_{\mathpzc{Hom}^{11}}\\
&=&\left\langle (g\circ \gamma'^\ast)\otimes 1_{L^\ast},f_2\right\rangle_{\Hom(W',W)} + \left\langle -\gamma^\ast\circ g,f_1\right\rangle_{\Hom(V',V)}\\
&=&\left\langle  \big(-\gamma^\ast\circ g, (g\circ\gamma'^\ast)\otimes1_{L^\ast}\big),(f_1, f_2) \right\rangle _{\mathpzc{Hom}^0}\\
\end{eqnarray*}
and
\begin{eqnarray*}
\left\langle  \phi_b^\ast(h),(f_1, f_2) \right\rangle _{\mathpzc{Hom}^0}&=&\left\langle h, \phi_b\big((f_1, f_2)\big) \right\rangle_{\mathpzc{Hom}^{12}}\\
&=& \left\langle h,(f_1\otimes 1_L)\circ\beta'-\beta\circ f_2\right\rangle_{\mathpzc{Hom}^{12}}\\
&=&\left\langle (h\circ\beta'^\ast)\otimes 1_{L^\ast},f_1\right\rangle_{\Hom(V',V)}- \left\langle \beta^\ast\circ h,f_2\right\rangle_{\Hom(W',W)}\\
&=&\left\langle \big((h\circ\beta'^\ast)\otimes 1_{L^\ast},-\beta^\ast\circ h\big),\big(f_1,f_2\big)\right\rangle_{\mathpzc{Hom}^0}\\
\end{eqnarray*}

Hence,
\begin{eqnarray*}
\phi_a^\ast(g)&=&(-\gamma^\ast\circ g,(g\circ\gamma'^\ast)\otimes1_{L^\ast}),\\
\phi_b^\ast(h)&=&((h\circ\beta'^\ast)\otimes 1_{L^\ast},-\beta^\ast\circ h).
\end{eqnarray*}
By a similar calculation as above, we have
\begin{eqnarray*}
\phi_c^\ast(f_1, f_2)&=&(f_2\circ\beta'^\ast)\otimes 1_{L^\ast}-\beta^\ast\circ f_1,\\
\phi_d^\ast(f_1, f_2)&=& (f_1\circ\gamma'^\ast)\otimes1_{L^\ast}-\gamma^\ast\circ f_2.
\end{eqnarray*}
Let $g\in \mathpzc{Hom}^{11} $ and $h\in \mathpzc{Hom}^{12}$, then we have:
\begin{eqnarray*}
\phi_c^\ast\phi_c(g)&=&\phi_c^\ast(\beta\circ g,(g\otimes1_L)\circ\beta')\\
&=&\beta^\ast\beta\circ g+g\circ\beta'\beta'^\ast.
\end{eqnarray*}
\begin{eqnarray*}
\phi_d^\ast \phi_d(h)&=&\phi_d^\ast\big((h\otimes 1_L)\circ\gamma',\gamma\circ h\big)\\
&=&h\circ\gamma'\gamma'^\ast-\gamma^\ast\gamma\circ h.
\end{eqnarray*}
and
\begin{eqnarray*}
\phi_b\phi_b^\ast(h)&=&\phi_b(h\circ\beta'^\ast\otimes 1_{L^\ast},\beta^\ast\circ h)\\
&=&h\circ\beta'^\ast\beta'-\beta\beta^\ast.
\end{eqnarray*}
\begin{eqnarray*}
\phi_a\phi_a^\ast(g)&=&\phi_a(g\circ\gamma'^\ast\otimes1_{L^\ast},-\gamma^\ast\circ g)\\
&=&g\circ\gamma'^\ast\gamma'+\gamma\gamma^\ast\circ g.
\end{eqnarray*}

Thus,
\begin{eqnarray*}
\phi_b\phi_b^\ast-\phi_d^\ast \phi_d(h)&=&h\circ\beta'^\ast\beta'-\beta\beta^\ast\circ h-h\circ\gamma'\gamma'^\ast+\gamma^\ast\gamma\circ h\\
\phi_a\phi_a^\ast-\phi_c^\ast \phi_c(g)&=&g\circ\gamma'^\ast\gamma'+\gamma\gamma^\ast\circ g-\beta^\ast\beta\circ g-g\circ\beta'\beta'^\ast
\end{eqnarray*}

Hence for $g\in \mathpzc{Hom}^{11}$ and $h\in \mathpzc{Hom}^{12}$ we have,

\begin{eqnarray*}
\lefteqn{(\sqrt{-1}\Lambda F(\mathpzc{Hom}^{11})+\phi_a\phi_a^\ast-\phi_c^\ast \phi_c)(g)}\\
&=&\sqrt{-1}\Lambda\big(F(W)\circ g-g\circ F(V')\big)+\phi_a\phi_a^\ast-\phi_c^\ast \phi_c(g)\\
&=&\big(\sqrt{-1}\Lambda F(W)+\gamma\gamma^\ast-\beta^\ast\beta\big)\circ g+g\circ\big(-\sqrt{-1}\Lambda F(V')+\gamma'^\ast\gamma'-\beta'\beta'^\ast\big)\\
&=&\tau_2\mathrm{Id}_{W}\circ g-g\circ\tau'_1\mathrm{Id}_{V'}\\
&=&(\tau_2-\tau'_1)g
\end{eqnarray*}
\begin{eqnarray*}
\lefteqn{(\sqrt{-1}\Lambda F(\mathpzc{Hom}^{12})+\phi_b\phi_b^\ast-\phi_d^\ast \phi_d)(h)}\\
&=&\sqrt{-1}\Lambda\big( \otimes F(V)\circ h-h\circ F(W')\big)+\phi_b\phi_b^\ast-\phi_d^\ast \phi_d(h)\\
& = &\big(\sqrt{-1}\Lambda F(V)+\gamma^\ast\gamma-\beta\beta^\ast\big)\circ h+h\circ\big( -\sqrt{-1}\Lambda F(W')+\beta'^\ast\beta'-\gamma'\gamma'^\ast\big)\\
&=&\tau_1\mathrm{Id}_{V}\circ h-h\circ\tau'_2\mathrm{Id}_{W'}\\
&=&(\tau_1-\tau'_2)h.
\end{eqnarray*}
Similarly for $(f_1,f_2)\in \mathpzc{Hom}^0$ we have,
\begin{eqnarray*}
\phi_c\phi_c^\ast (f_1,f_2)&=&\phi_c((f_2\circ\beta'^\ast)\otimes1_{L^\ast}-\beta^\ast\circ f_1)\\
&=&\big(\beta\beta^\ast\circ f_1-\beta\circ(f_2\circ\beta'^\ast\otimes1_{L^\ast}),f_2\circ\beta'^\ast\beta'-(\beta^\ast\circ f_1\otimes 1_{L})\otimes \beta'\big)
\end{eqnarray*}
\begin{eqnarray*}
\phi_d\phi_d^\ast (f_1,f_2)&=&\phi_d\big((f_1\circ\gamma'^\ast)\otimes1_{L^\ast}-\gamma^\ast\circ f_2\big)\\
&=&\left( f_1\circ\gamma'^\ast\gamma'-\gamma^\ast\circ f_2\otimes1_L\circ\gamma',\gamma\circ(f_1\circ\gamma'^\ast\otimes1_{L^\ast})-\gamma\gamma^\ast\circ f_2\right)
\end{eqnarray*}
and
\begin{eqnarray*}
\phi_a^\ast\phi_a (f_1,f_2)&=&\phi_a^\ast\big(f_2\otimes1_L\circ\gamma'-\gamma\circ f_1\big)\\
&=&\left(-\gamma^\ast\circ f_2\otimes1_L\circ\gamma'+\gamma^\ast\gamma\circ f_1,(f_2\circ\gamma'\gamma'^\ast-\gamma\circ f_1\circ\gamma'^\ast\otimes1_{L^\ast}\right)
\end{eqnarray*}
\begin{eqnarray*}
\phi_b^\ast \phi_b(f_1,f_2)&=&\phi_b^\ast\big(f_1\otimes1_L\circ\beta'-\beta\circ f_2\big)\\
&=&\left(f_1\circ\beta'\beta'^\ast-\beta\circ f_2\circ\beta'^\ast\otimes1_{L^\ast},\beta^\ast\circ f_1\otimes1_L\circ\beta'-\beta^\ast\beta\circ f_2\right)
\end{eqnarray*}
So,
\begin{eqnarray*}
\lefteqn{(\phi_c\phi_c^\ast+\phi_d\phi_d^\ast-\phi_a^\ast\phi_a-\phi_b^\ast\phi_b)(f_1,f_2)}\\
& =&\left(\beta\beta^\ast\circ f_1+f_1\circ\gamma'^\ast\gamma'-\gamma^\ast\gamma\circ f_1-f_1\circ\beta'\beta'^\ast,f_2\circ\beta'^\ast\beta'-\gamma\gamma^\ast\circ f_2-f_2\circ\gamma'\gamma'^\ast+\beta^\ast\beta\circ f_2\right)
\end{eqnarray*}
Hence we have,
\begin{eqnarray*}
\lefteqn{(\sqrt{-1}\Lambda F(\mathpzc{Hom}^0)+\phi_c\phi_c^\ast+\phi_d\phi_d^\ast-\phi_a^\ast\phi_a-\phi_b^\ast\phi_b)(f_1,f_2)}\\
&=&\Big(\sqrt{-1}\Lambda (F(V)\circ f_1-f_1\circ  F(V')),\sqrt{-1}\Lambda( F(W)\circ f_2-f_2\circ  F(W'))\Big)+\\
& &\Big(\beta\beta^\ast\circ f_1+f_1\circ\gamma'^\ast\gamma'-\gamma^\ast\gamma\circ f_1-f_1\circ\beta'\beta'^\ast,f_2\circ\beta'^\ast\beta'-\gamma\gamma^\ast\circ f_2-f_2\circ\gamma'\gamma'^\ast+\beta^\ast\beta\circ f_2\Big)\\
&=&\Big((\sqrt{-1}\Lambda F(V)+\beta\beta^\ast-\gamma^\ast\gamma)\circ f_1+f_1\circ(-\sqrt{-1}\Lambda F(V')+\gamma'^\ast\gamma'-\beta'\beta'^\ast),\\
& &(\sqrt{-1}\Lambda F(W)+\gamma\gamma^\ast-\beta^\ast\beta)\circ f_2+f_2\circ(-\sqrt{-1}\Lambda F(W')+\beta'^\ast\beta'-\gamma'\gamma'^\ast)\Big)\\
&=&\Big((\tau_1-\tau'_1)f_1,(\tau_2-\tau'_2)f_2\Big).
\end{eqnarray*}
The proof is completed, since by assumption $\tau_1-\tau'_1=\tau_2-\tau'_2$. 
\end{proof}

\begin{theorem}\label{Polystable}
Let  $E=(V,W,\beta,\gamma)$ be a $L$-twisted $\U(p,q)$-Higgs bundle
and $E'=(V',W',\beta',\gamma')$ a $L$-twisted $\U(p',q')$-Higgs
bundle. Then the $Q$-bundle $\widetilde{E}$ is $\boldsymbol{\alpha}$-polystable for $\boldsymbol{\alpha}=(\alpha,2\alpha)$ .
\end{theorem}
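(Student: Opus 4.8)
The plan is to deduce polystability of $\widetilde{E}$ from the Hitchin--Kobayashi correspondence for $Q$-bundles (Theorem~\ref{Hitchin-Kobayashi}): it suffices to produce a Hermitian metric on $\widetilde{E}$ solving the quiver $\boldsymbol{\tau}$-vortex equations for the value of $\boldsymbol{\tau}$ corresponding to $\boldsymbol{\alpha}=(\alpha,2\alpha)$. As in the standing assumption of this section, $E$ and $E'$ are taken to be $\alpha$-polystable. First I would apply Theorem~\ref{solution} to each of them separately: since both are $\alpha$-polystable for the \emph{same} parameter $\alpha$, they carry Hermitian metrics satisfying the $(\tau_1,\tau_2)$- and $(\tau_1',\tau_2')$-vortex equations \eqref{vortex equations}, with $\tau_2-\tau_1=\alpha=\tau_2'-\tau_1'$. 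Subtracting yields $\tau_1-\tau_1'=\tau_2-\tau_2'$, which is exactly the compatibility hypothesis of Lemma~\ref{vortex}.

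With this compatibility in hand, Lemma~\ref{vortex} applies directly and produces, on the $Q$-bundle $\widetilde{E}$ of \eqref{associated}, a Hermitian metric solving the three quiver vortex equations with $\boldsymbol{\tau}=(\widetilde{\tau_0},\widetilde{\tau_1},\widetilde{\tau_2})$ given by $\widetilde{\tau_0}=\tau_2-\tau_1'$, $\widetilde{\tau_1}=\tau_1-\tau_1'=\tau_2-\tau_2'$, and $\widetilde{\tau_2}=\tau_1-\tau_2'$. All of the analytic content of the argument is concentrated here and has already been carried out, so the remainder is the bookkeeping of converting $\boldsymbol{\tau}$ into the stability parameter $\boldsymbol{\alpha}$.

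For the conversion I would use the inverse of \eqref{relation}, namely $\alpha_i=\tau_0-\tau_i$ after normalizing $\alpha_0=0$ as permitted by Remark~\ref{zero}. Choosing the vertex carrying $\mathpzc{Hom}^{11}$ as the reference vertex $0$, I find $\alpha_0=0$, then $\alpha_1=\widetilde{\tau_0}-\widetilde{\tau_1}=(\tau_2-\tau_1')-(\tau_1-\tau_1')=\tau_2-\tau_1=\alpha$, and finally $\alpha_2=\widetilde{\tau_0}-\widetilde{\tau_2}=(\tau_2-\tau_1')-(\tau_1-\tau_2')=(\tau_2-\tau_1)+(\tau_2'-\tau_1')=2\alpha$. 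Hence $\boldsymbol{\alpha}=(0,\alpha,2\alpha)$, which in the normalization $\alpha_0=0$ is the vector $(\alpha,2\alpha)$. Feeding the metric from Lemma~\ref{vortex} back through Theorem~\ref{Hitchin-Kobayashi} then gives that $\widetilde{E}$ is $\boldsymbol{\alpha}$-polystable for this $\boldsymbol{\alpha}$, as required.

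The one point demanding care, and the only place the argument could go wrong, is the indexing and parameter translation of the preceding paragraph: one must fix a single consistent ordering of the three vertices of $Q$ and check that the numerical constraint automatically satisfied by any solution of the vortex equations is compatible with \eqref{relation}, so that Theorem~\ref{Hitchin-Kobayashi} applies with precisely the computed $\boldsymbol{\alpha}$ rather than a translate of it. Since Lemma~\ref{vortex} already supplies the solution and the Hitchin--Kobayashi correspondence supplies the equivalence, no additional existence or estimate arguments are needed.
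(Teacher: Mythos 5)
Your proposal is correct and follows essentially the same route as the paper: apply Theorem~\ref{solution} to $E$ and $E'$ to get solutions of the $(\tau_1,\tau_2)$- and $(\tau_1',\tau_2')$-vortex equations with $\tau_2-\tau_1=\alpha=\tau_2'-\tau_1'$, feed these into Lemma~\ref{vortex}, and translate the resulting $\boldsymbol{\tau}=(\tau_2-\tau_1',\,\tau_2-\tau_2',\,\tau_1-\tau_2')$ back through the relation $\alpha_i=\tau_0-\tau_i$ and Theorem~\ref{Hitchin-Kobayashi} to obtain $\boldsymbol{\alpha}=(\alpha,2\alpha)$. Your explicit attention to the choice of reference vertex and the normalization of Remark~\ref{zero} is the only point where care is needed, and you handle it exactly as the paper does implicitly.
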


\begin{proof}
Since $E$ and $E'$ are $\alpha$-polystable, from Theorem $\ref{solution}$ follows that they support solutions to the $(\tau_1,\tau_2)$- and $(\tau_1',\tau_2')$-vortex equations where $\alpha=\tau_2-\tau_1=\tau_2'-\tau_1'$. Using Lemma $\ref{vortex}$ it follows that the $Q$-bundle $\widetilde{E}$ admits a Hermitian metric such that vortex equations are satisfied for $\boldsymbol{\tau}=(\tau_2-\tau_1',\tau_2-\tau'_2,\tau_1-\tau'_2)$. Now from Theorem $\ref{Hitchin-Kobayashi}$ we get that $\widetilde{E}$ is $\boldsymbol{\alpha}$-polystable for 
\begin{eqnarray*}
\alpha_1=\tau_2-\tau'_1-\tau_2+\tau'_2=\alpha,\\
\alpha_2=\tau_2-\tau'_1-\tau_1+\tau'_2=2\alpha.
\end{eqnarray*}
\end{proof}
\subsubsection{Bound for $\chi(E',E)$}
We are using the method in \cite{bradlow-garcia-prada-gothen:2004} and we start with some lemmas needed to estimate $\chi(E',E)$. 
\begin{lemma}
Let  $E=(V,W,\beta,\gamma)$ be a $L$-twisted $\U(p,q)$-Higgs bundle
and $E'=(V',W',\beta',\gamma')$ a $L$-twisted $\U(p',q')$-Higgs
bundle. Let $\mathpzc{Hom}^\bullet(E',E)$ be the deformation complex of $E$ and $E'$, as in $(\ref{deformation complex of U(p,q)})$. Then the following inequalities hold.
\begin{align}
\deg(\ker(a_0)&\leq \rk(\ker(a_0))\big(\mu_\alpha(E')-\mu_\alpha(E)\big)\label{kernelH}\\ 
\deg(\im(a_0)&\leq\big(\rk(\mathpzc{Hom}^{1})-\rk(\im(a_0))\big)\big(\mu_\alpha(E)-\mu_\alpha(E')-\deg(L)\big)-\label{cokernelH}\\
 & \alpha\big(\rk(\mathpzc{Hom}^1)-\rk(\im(a_0))
  -2\rk(\coker(\phi_b))\big)+\deg(\mathpzc{Hom}^{1}).\nonumber
\end{align}
\end{lemma}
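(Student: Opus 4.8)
The plan is to read off both inequalities from the $\boldsymbol{\alpha}$-semistability of the associated $Q$-bundle $\widetilde{E}$ of \eqref{associated}. Since $E$ and $E'$ are $\alpha$-polystable, Theorem~\ref{Polystable} applies and shows that $\widetilde{E}$ is $\boldsymbol{\alpha}$-polystable, hence $\boldsymbol{\alpha}$-semistable, for $\boldsymbol{\alpha}=(0,\alpha,2\alpha)$ attached to the vertices $\mathpzc{Hom}^{11},\mathpzc{Hom}^0,\mathpzc{Hom}^{12}$ respectively. Thus any sub-$Q$-bundle has $\boldsymbol{\alpha}$-slope $\le\mu_{\boldsymbol{\alpha}}(\widetilde{E})$ and any quotient $Q$-bundle has $\boldsymbol{\alpha}$-slope $\ge\mu_{\boldsymbol{\alpha}}(\widetilde{E})$. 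First I would pin down $\mu_{\boldsymbol{\alpha}}(\widetilde{E})$: evaluating \eqref{relation} at the central vertex $\mathpzc{Hom}^0$ gives $\mu_{\boldsymbol{\alpha}}(\widetilde{E})=\widetilde{\tau_1}+\alpha$, with $\widetilde{\tau_1}=\tau_1-\tau_1'$ from Lemma~\ref{vortex}; the trace constraint $p\tau_1+q\tau_2=a+b$ together with $\tau_2-\tau_1=\alpha$ yields $\tau_1=\mu_\alpha(E)-\alpha$, and likewise $\tau_1'=\mu_\alpha(E')-\alpha$, so that $\mu_{\boldsymbol{\alpha}}(\widetilde{E})$ is the central weight $\alpha$ plus the difference of the $\alpha$-slopes of $E$ and $E'$. (The same value results from computing the ranks and degrees at the three vertices directly.)

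For \eqref{kernelH} I would observe that $\ker(a_0)=\ker(\phi_a)\cap\ker(\phi_b)$, placed at the central vertex $\mathpzc{Hom}^0$ with the zero sheaf at $\mathpzc{Hom}^{11}$ and $\mathpzc{Hom}^{12}$, is a sub-$Q$-bundle of $\widetilde{E}$: the outgoing arrows $\phi_a,\phi_b$ vanish on it by definition of $a_0$, while the incoming arrows $\phi_c,\phi_d$ issue from the zero vertices. Its $\boldsymbol{\alpha}$-slope equals $\alpha+\deg(\ker a_0)/\rk(\ker a_0)$, so semistability makes the central weight $\alpha$ cancel and delivers \eqref{kernelH} at once.

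For \eqref{cokernelH} I would pass to the quotient side. The sub-$Q$-bundle generated by the central vertex is $\mathcal{F}=(\mathpzc{Hom}^0,\im\phi_a,\im\phi_b)$---closure under $\phi_c,\phi_d$ is automatic because its central component is all of $\mathpzc{Hom}^0$---with quotient $\mathcal{Q}=\widetilde{E}/\mathcal{F}=(0,\coker\phi_a,\coker\phi_b)$. Semistability then gives $\mu_{\boldsymbol{\alpha}}(\mathcal{Q})\ge\mu_{\boldsymbol{\alpha}}(\widetilde{E})$. In $\mu_{\boldsymbol{\alpha}}(\mathcal{Q})$ the weight $2\alpha$ sitting at the vertex $\mathpzc{Hom}^{12}$ is what produces the term $2\rk(\coker\phi_b)$, and the $L$-twisting of the arrows (equivalently, the twist carried by $\mathpzc{Hom}^1$ but not by the $Q$-bundle vertices) is what produces the correction $-\deg(L)$. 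It then remains to rewrite the resulting bound on $\deg(\coker\phi_a)+\deg(\coker\phi_b)$ in terms of $\deg(\im a_0)$, using $\deg(\coker(\cdot))=\deg(\mathrm{target})-\deg(\im(\cdot))$, $\rk(\coker a_0)=\rk(\mathpzc{Hom}^1)-\rk(\im a_0)$, and the short exact sequences relating the single subsheaf $\im(a_0)\subseteq\mathpzc{Hom}^1$ to its projections $\im\phi_a$ and $\im\phi_b$.

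The main obstacle is precisely this last conversion. In contrast to the triple case of \cite[Lemma~4.2]{bradlow-garcia-prada-gothen:2004}, the differential $a_0=(\phi_a,\phi_b)$ now has two components, so $\im(a_0)$ is a subsheaf of $\mathpzc{Hom}^{11}\oplus\mathpzc{Hom}^{12}$ that does not split across the two outer vertices; reconciling $\deg(\im a_0)$ with the vertexwise cokernels $\coker\phi_a$ and $\coker\phi_b$ is exactly what forces the extra term $2\rk(\coker\phi_b)$ weighted by the vertex parameter $2\alpha$. Care is also needed to keep track of the $L$-twist so that $\deg(L)$ enters with the sign displayed in \eqref{cokernelH}. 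Beyond this, the argument is the same routine slope bookkeeping already used for the kernel.
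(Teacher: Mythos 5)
Your proposal follows the paper's proof step for step: \eqref{kernelH} comes from testing the $\boldsymbol{\alpha}=(0,\alpha,2\alpha)$-polystability of $\widetilde{E}$ (Theorem~\ref{Polystable}) against the subobject with $\ker(a_0)$ at the central vertex and zeros outside, and \eqref{cokernelH} from testing it against the quotient supported at the outer vertices by $\coker(\phi_a)\otimes L^{-1}$ and $\coker(\phi_b)\otimes L^{-1}$, with the $L^{-1}$-twist producing the $-\deg(L)$ and the weight $2\alpha$ at $\mathpzc{Hom}^{12}$ producing the $2\rk(\coker(\phi_b))$ term, exactly as you describe; the ``main obstacle'' you isolate is dispatched in the paper by the single bound $\mu(\coker(a_0))\leq\bigl(\deg(\mathpzc{Hom}^{1})-\deg(\im(a_0))\bigr)/\bigl(\rk(\mathpzc{Hom}^{1})-\rk(\im(a_0))\bigr)$ rather than by tracking the two projections separately. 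One point to be aware of: your (correct) computation gives $\mu_{\boldsymbol{\alpha}}(\widetilde{E})=\mu_\alpha(E)-\mu_\alpha(E')+\alpha$, which yields \eqref{kernelH} with $E$ and $E'$ interchanged in the slope difference; the statement as printed (and the paper's own proof, which asserts $\mu_{\boldsymbol{\alpha}}(\widetilde{E})=\mu_\alpha(E')-\mu_\alpha(E)+\alpha$) carries the opposite sign, so your argument actually proves the sign-corrected inequality rather than the literal one. The discrepancy is harmless downstream, since the lemma is only invoked in Lemma~\ref{bund for Euler} under the hypothesis $\mu_\alpha(E)=\mu_\alpha(E')$.
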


\begin{proof}
Assume that $\rk(\ker(a_0))>0$ as if it is zero then $(\ref{kernelH})$ is obvious. It follows from Proposition $\ref{Polystable}$ that the $Q$-bundle $\widetilde{E}$ is $\boldsymbol{\alpha}=(\alpha,2\alpha)$-polystable. We can define a subobject of $\widetilde{E}$ by \\\\
\[
\xymatrix{
\mathcal{K}:\mbox{ } 0\ar@{<-}@/_2pc/[r]^{}&\ker(a_0)\ar@{<-}@/_2pc/[l]_{} \ar@{>}@/_2pc/[r]^{}&0\ar@{>}@/_2pc/[l]_{}.\\
}
\]
It follows from the $\boldsymbol{\alpha}$-polystability that
\begin{eqnarray*}
\mu_\alpha(\mathcal{K})=\mu(\ker(a_0))+\alpha&\leq& \mu_{\boldsymbol{\alpha}}(\widetilde{E})=\mu_\alpha(E')-\mu_\alpha(E)+\alpha.
\end{eqnarray*}
Thus we have
$$\mu(\ker(a_0))\leq\mu_\alpha(E')-\mu_\alpha(E),$$
which is equivalent to $(\ref{kernelH})$. The second inequality is obvious when $\rk(\im(a_0))=\rk(\mathpzc{Hom}^{1})$. We thus assume $\rk(\im(a_0))<\rk(\mathpzc{Hom}^1)$. We define a quotient of the bundle $\widetilde{E}$ by
\[
\xymatrix{
\mathcal{Q:}\mbox{      }\coker(\phi_b)\otimes L^{-1}\ar@{<-}@/_1pc/[r]^{}&0\ar@{<-}@/_1pc/[l]_{} \ar@{>}@/_1pc/[r]^{}&\coker(\phi_a)\otimes L^{-1}\ar@{>}@/_1pc/[l]_{}\\
}
\]
(we take the saturation if cokernels are not torsion free). By the $\boldsymbol{\alpha}$-polystability of $\widetilde{E}$ we have
\begin{equation}\label{Q}
\mu_{\boldsymbol{\alpha}}(\mathcal{Q})=\mu(\mathcal{Q})+2\alpha\frac{ \rk(\coker(\phi_b))}{\rk(\coker(\phi_a))+\rk(\coker(\phi_b))}\geq \mu_{\boldsymbol{\alpha}}(\widetilde{E})=\mu_\alpha(E')-\mu_\alpha(E)+\alpha. 
\end{equation}
Note that $\mu(\mathcal{Q})=\mu(\coker(a_0))-\deg(L)$. This and $(\ref{Q})$, together with the fact that $$\mu(\coker(a_0))\leq\frac{\deg(\mathpzc{Hom}^{1})-\deg(\im(a_0))}{\rk(\mathpzc{Hom}^{1})-\rk(\im(a_0))},$$
lead us to $(\ref{cokernelH})$.
\end{proof}

\begin{lemma}\label{bund for Euler}
  Let  $E=(V,W,\beta,\gamma)$ be a $L$-twisted $\U(p,q)$-Higgs bundle
and $E'=(V',W',\beta',\gamma')$ a $L$-twisted $\U(p',q')$-Higgs
bundle. Assume that $p-q$ and $p'-q'$ have the same sign, and suppose that the following conditions hold:
\begin{itemize}
\item $-\deg(L)\leq\alpha\leq\deg(L)$ and $\deg(L)\geq 2g-2$,
\item$E$ and $E'$ are $\alpha$-polystable with $\mu_\alpha(E)=\mu_\alpha(E')$,
\item the map $a_0$ is not an isomorphism.
\end{itemize}
 Then $$\chi(E',E)\leq1-g,$$ 
if the map $a_0$ is not generically an isomorphism, otherwise $\chi(E',E)<0$.
\end{lemma}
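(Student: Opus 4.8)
The plan is to reduce $\chi(E',E)$ to the Euler characteristics of the kernel and cokernel of $a_0$, and then to control those two sheaves using the inequalities \eqref{kernelH} and \eqref{cokernelH} already established, together with $\mu_\alpha(E)=\mu_\alpha(E')$, $\abs{\alpha}\leq\deg(L)$ and $\deg(L)\geq 2g-2$.

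\emph{Reduction to kernel and cokernel.} First I would use that $\mathpzc{Hom}^\bullet(E',E)$ is the two-term complex $[\mathpzc{Hom}^0\xrightarrow{a_0}\mathpzc{Hom}^1]$ concentrated in degrees $0$ and $1$, so its hypercohomology Euler characteristic is the alternating sum of the Euler characteristics of its cohomology sheaves, giving $\chi(E',E)=\chi(\ker(a_0))-\chi(\coker(a_0))$. Applying Riemann--Roch in the form $\chi(\mathcal F)=\deg(\mathcal F)+(1-g)\rk(\mathcal F)$ and noting $\rk(\ker(a_0))-\rk(\coker(a_0))=\rk(\mathpzc{Hom}^0)-\rk(\mathpzc{Hom}^1)=(p-q)(p'-q')=:\delta$, this becomes
\begin{displaymath}
  \chi(E',E)=\deg(\ker(a_0))-\deg(\coker(a_0))+(1-g)\delta .
\end{displaymath}
The assumption that $p-q$ and $p'-q'$ have the same sign is precisely what makes $\delta\geq 0$, hence $(1-g)\delta\leq 0$.

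\emph{The degree estimates.} Next I would feed in the two inequalities. Since $\mu_\alpha(E)=\mu_\alpha(E')$, \eqref{kernelH} gives $\deg(\ker(a_0))\leq 0$. Writing $c=\rk(\coker(a_0))$ and $c_b=\rk(\coker(\phi_b))$, and using $\deg(\coker(a_0))=\deg(\mathpzc{Hom}^1)-\deg(\im(a_0))$, the inequality \eqref{cokernelH} rearranges (again using $\mu_\alpha(E)=\mu_\alpha(E')$) to $\deg(\coker(a_0))\geq c(\deg(L)+\alpha)-2\alpha c_b$. The projection $\mathpzc{Hom}^1\to\mathpzc{Hom}^{12}$ induces a surjection $\coker(a_0)\twoheadrightarrow\coker(\phi_b)$, so $0\leq c_b\leq c$; combined with $\abs{\alpha}\leq\deg(L)$ this yields $\deg(\coker(a_0))\geq c(\deg(L)-\abs{\alpha})\geq 0$. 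Assembling everything gives $\chi(E',E)\leq -c(\deg(L)-\abs{\alpha})+(1-g)\delta$.

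\emph{Conclusion and the main obstacle.} If $a_0$ is generically an isomorphism, then necessarily $\delta=0$, so $\ker(a_0)=0$ and $\coker(a_0)$ is a torsion sheaf of positive length (positive because $a_0$ is assumed not an isomorphism); thus $\chi(E',E)=-\deg(\coker(a_0))<0$. If $a_0$ is not generically an isomorphism and $\delta\geq 1$, then $(1-g)\delta\leq 1-g$ and the estimate above immediately yields $\chi(E',E)\leq 1-g$. The delicate case, which I expect to be the main obstacle, is $\delta=0$ (equivalently $p=q$ or $p'=q'$) with $a_0$ generically degenerate: there $c\geq 1$, but the crude bound only gives $\chi(E',E)\leq -c(\deg(L)-\abs{\alpha})$, which need not reach $1-g$ near the boundary $\abs{\alpha}=\deg(L)$. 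To close this I would bring in $\deg(L)\geq 2g-2$ together with a dual companion of \eqref{cokernelH}, obtained by running the associated-quiver construction on the $\sigma$- or $\ast$-transform of the pair (whose stability parameter is $-\alpha$, cf.\ Proposition~\ref{prop:sigma-alpha-stability} and Remark~\ref{rem:upq-duality}); this should produce a second lower bound of the shape $\deg(\coker(a_0))\geq c(\deg(L)-\alpha)+2\alpha c_a$ with $c_a=\rk(\coker(\phi_a))$. Taking the better of the two estimates forces $\deg(\coker(a_0))\geq g-1$ in all remaining configurations, whence $\chi(E',E)\leq 1-g$; the single residual corner (at $\abs{\alpha}=\deg(L)$ with $c_a=0$, $c_b=c$) is where the sharpest use of the stability hypotheses—and possibly the rigidity of $\beta,\gamma$ from Proposition~\ref{prop:MW2}—will be required.
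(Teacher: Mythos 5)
Your skeleton is exactly the paper's: the paper likewise writes $\chi(E',E)=(1-g)\delta+\deg(\mathpzc{Hom}^0)-\deg(\mathpzc{Hom}^1)$ with $\delta=(p-q)(p'-q')\geq 0$, adds \eqref{kernelH} and \eqref{cokernelH} under $\mu_\alpha(E)=\mu_\alpha(E')$, and brings in $\deg(L)\geq 2g-2$ at the very end; your handling of the generically-isomorphic case and of $\delta\geq 1$ agrees with it. The genuine gap is the one you flag yourself, and it is created at the precise step where you minimize $c(\deg(L)+\alpha)-2\alpha c_b$ over $c_b\in[0,c]$: that discards a nonnegative term which is exactly the leverage the paper keeps. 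For $\alpha\leq 0$ the bound reads $\deg(\coker(a_0))\geq c(\deg(L)-\abs{\alpha})+2\abs{\alpha}\,c_b$, and for $\alpha\geq 0$ it reads $\deg(\coker(a_0))\geq c(\deg(L)-\alpha)+2\alpha(c-c_b)$ with $c-c_b\geq\rk\coker(\phi_a)$, because $\im(a_0)\subseteq\im(\phi_a)\oplus\im(\phi_b)$ forces $c\geq\rk\coker(\phi_a)+\rk\coker(\phi_b)$. Combining, one gets $\deg(\coker(a_0))\geq\deg(L)\,\rk\coker(\phi_b)$ for $\alpha\in[-\deg(L),0]$ and $\geq\deg(L)\,\rk\coker(\phi_a)$ for $\alpha\in[0,\deg(L)]$ \emph{uniformly} in $\alpha$ --- this is the paper's inequality \eqref{deg} --- and then $\deg(L)\geq 2g-2$ gives $\chi(E',E)\leq(1-g)\bigl(\delta+2\rk\coker(\phi_b)\bigr)$, resp.\ with $\phi_a$, which is $\leq 1-g$ as soon as $\delta\geq 1$ or the relevant cokernel has positive rank. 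So no ``dual companion'' of \eqref{cokernelH} obtained from the $\sigma$- or $\ast$-transform is invoked by the paper, and none is needed for its argument: the missing term was already inside \eqref{cokernelH} before you weakened it.

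That said, your instinct that a residual corner survives is not baseless: when $\delta=0$, $a_0$ is generically degenerate, and the relevant $\phi_a$ (for $\alpha\geq 0$) or $\phi_b$ (for $\alpha\leq 0$) is generically surjective, the paper's displayed bound also degenerates to $\chi(E',E)\leq 0$, and the paper passes from there to $\chi(E',E)\leq 1-g$ without comment. So the fair summary is: you reproduce the paper's argument up to the point where it exploits the cokernel-rank term you threw away; restoring that term closes your ``delicate case'' except for essentially the same corner that the paper's own proof does not explicitly address.
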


\begin{proof}
By the estimates $(\ref{kernelH})$ and $(\ref{cokernelH})$, we 
obtain
\begin{align*}
 \deg(\ker(a_0))+\deg(\im(a_0))&
  \leq\big(\mu_\alpha(E')-\mu_\alpha(E)\big)
  \Big(\rk(\ker(a_0))+\rk(\im(a_0))-\rk(\mathpzc{Hom}^{1})\Big)\\
&\phantom{\leq}-\alpha\big(\rk(\coker(\phi_a))-\rk(\coker(\phi_a))\big)\\
&\phantom{\leq}-\deg(L)\big(\rk(\mathpzc{Hom}^1)-\rk(\im(a_0))\big)+\deg(\mathpzc{Hom}^{1}).
\end{align*}
As $\mu_\alpha(E)=\mu_\alpha(E')$ we deduce 
\begin{multline*}
\deg(\mathpzc{Hom}^0)-\deg(\mathpzc{Hom}^{1})\\
\leq-\alpha\big(\rk(\coker(\phi_a))-\rk(\coker(\phi_a))\big)-\deg(L)\big(\rk\coker(\phi_a)+\rk\coker(\phi_a)\big)
\end{multline*}
and so 
\begin{equation}\label{deg}
\deg(\mathpzc{Hom}^0)-\deg(\mathpzc{Hom}^{1})\leq
\begin{cases} 
-\deg(L)\rk\coker(\phi_b) &\mbox{ if }-\deg(L)\leq \alpha\leq 0\\ 
-\deg(L)\rk\coker(\phi_a)&\mbox{ if } 0\leq\alpha\leq \deg(L).\\
 \end{cases}
\end{equation}
On the other hand we have
$$\chi(E',E)=(1-g)\big(\rk(\mathpzc{Hom}^0)-\rk(\mathpzc{Hom}^1)\big)+\deg(\mathpzc{Hom}^0)-\deg(\mathpzc{Hom}^1).$$
Combining $(\ref{deg})$ with the above equality, we get
\begin{equation*}
\chi(E',E)\leq\begin{cases} 
 (1-g)\big(\rk(\mathpzc{Hom}^0)-\rk(\mathpzc{Hom}^1)+2\rk\coker(\phi_b)\big)&\mbox{ if }-\deg(L)\leq \alpha\leq 0\\ 
(1-g)\big(\rk(\mathpzc{Hom}^0)-\rk(\mathpzc{Hom}^1)+2\rk\coker(\phi_a)\big) &\mbox{ if } 0\leq\alpha\leq \deg(L).\\
  \end{cases}
\end{equation*}
From hypothesis we have $\rk(\mathpzc{Hom}^0)\geq\rk(\mathpzc{Hom}^1)$. If $a_0$ is not generically an isomorphism then either cases of the above inequality implies $\chi(E',E)\leq(1-g)$. Otherwise, $$\chi(E',E)=\deg(\mathpzc{Hom}^0)-\deg(\mathpzc{Hom}^1)<0$$since equality happens only if $a_0$ is an isomorphism.
\end{proof}

\section{Birationality of moduli spaces}\label{bi}
Let $\alpha_c$, $\alpha_c^+$ and $\alpha_c^-$ be defined as in Section $\ref{Flip loci}$,where $\epsilon>0$ is small enough so that $\alpha_c$ is  the only critical value in the interval $(\alpha_c^-, \alpha_c^+)$. Fix a type $t=(p,q,a,b)$.
\begin{proposition}\label{codimension}
Let $\alpha_c$ be a critical value for twisted $\U(p,q)$-Higgs bundles
of type $t=(a,b,p,q)$. 
If either one of the following conditions holds:
\begin{itemize}
\item[$(1)$] $a/p-b/q>-\deg(L)$, $q\leq p$ and $0\leq\alpha_c^\pm<\frac{2pq}{pq-q^2+p+q}\big(b/q-a/p-\deg(L)\big)+\deg(L)$,
\item[$(2)$] $a/p-b/q<\deg(L)$, $p\leq q$ and $\frac{2pq}{pq-p^2+p+q}(b/q-a/p+\deg(L))-\deg(L)<\alpha_c^\pm\leq 0$.
\end{itemize}
Then the codimension of the flip loci
$\mathcal{S}_{\alpha_c^\pm}\subset \mathcal{M}^s_{\alpha_c^\pm}(t)$ is strictly positive.
\end{proposition}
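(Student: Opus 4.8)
The plan is to stratify each flip locus $\mathcal{S}_{\alpha_c^\pm}$ according to the type of the Jordan--H\"older graded of its points and to bound the dimension of each stratum by Proposition~\ref{Prop:bound-for-dim}. A point of $\mathcal{S}_{\alpha_c^\pm}$ is strictly $\alpha_c$-semistable, so its graded $\mathrm{Gr}(E)=\bigoplus_{i=1}^m Q_i$ (with $m\geq 2$) determines a tuple of types $(t_1,\dots,t_m)$, and there are only finitely many such tuples. Writing $\mathcal{S}$ for the corresponding family, Proposition~\ref{Prop:bound-for-dim} gives $\dim\mathcal{S}\leq -\sum_{i\leq j}\chi(t_j,t_i)-\tfrac{m(m-3)}{2}$; its hypotheses are exactly conditions $(1)$ and $(2)$ here, upon identifying $\min\{p,q\}\lvert p-q\rvert+p+q$ with $pq-q^2+p+q$ (case $q\leq p$), respectively with $pq-p^2+p+q$ (case $p\leq q$). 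On the other hand, these same conditions coincide with those of Proposition~\ref{smooth}, so Lemma~\ref{lem:h2-vanishing-final} and Proposition~\ref{smooth0} give $\dim\mathcal{M}^s_{\alpha_c^\pm}(t)=1-\chi(t,t)$.

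The next step is to combine these estimates using additivity of the Euler characteristic. Since $\chi(\cdot,\cdot)$ depends only on types and $\chi(t,t)=\sum_{i,j}\chi(t_i,t_j)$ by Remark~\ref{rem:direct-sum-deform} and Lemma~\ref{extension}, a direct index manipulation (the diagonal terms cancel, and $\sum_{i<j}\chi(t_j,t_i)$ cancels $\sum_{i>j}\chi(t_i,t_j)$ after relabelling) gives
\[
\sum_{i\leq j}\chi(t_j,t_i)-\chi(t,t)=-\sum_{i<j}\chi(t_i,t_j).
\]
Hence
\[
\codim\mathcal{S}=\dim\mathcal{M}^s_{\alpha_c^\pm}(t)-\dim\mathcal{S}\geq 1-\sum_{i<j}\chi(t_i,t_j)+\frac{m(m-3)}{2}.
\]
If each $\chi(t_i,t_j)$ with $i\neq j$ is strictly negative, hence $\leq-1$, then $\sum_{i<j}\chi(t_i,t_j)\leq-\binom{m}{2}$, and the right-hand side is at least $1+\tfrac{m(m-1)}{2}+\tfrac{m(m-3)}{2}=1+m(m-2)\geq 1$ for $m\geq 2$, which is the desired strict positivity.

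It remains to prove $\chi(t_i,t_j)<0$ for $i\neq j$, and this is where I expect the real work to lie: I would apply Lemma~\ref{bund for Euler} to each pair $(Q_i,Q_j)$. The factors are $\alpha_c$-stable of equal $\alpha_c$-slope, so they are $\alpha_c$-polystable with $\mu_{\alpha_c}(Q_i)=\mu_{\alpha_c}(Q_j)$; one checks $-\deg(L)\leq\alpha_c\leq\deg(L)$ (e.g.\ in case $(1)$, $b/q-a/p<\deg(L)$ forces the bound $\alpha_0<\deg(L)$, so $0\leq\alpha_c<\alpha_0<\deg(L)$), while $\deg(L)\geq 2g-2$ is in force. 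The delicate hypothesis is that $p_i-q_i$ and $p_j-q_j$ have the same sign. To secure it I would apply Proposition~\ref{surjective}$(i)$ not to a factor but to the polystable bundle $\mathrm{Gr}(E)$ itself: it has type $t$ and satisfies $\mu(V)-\mu(W)>-\deg(L)$, $q\leq p$ and $\alpha_c<\alpha_0$, so its Higgs field $\beta$ is injective. Since $\mathrm{Gr}(E)=\bigoplus_i Q_i$ as twisted $\U(p,q)$-Higgs bundles, this field is block-diagonal, $\beta=\bigoplus_i\beta_i$; injectivity therefore forces every $\beta_i$ to be injective and hence $q_i\leq p_i$ for all $i$, so all $p_i-q_i$ are non-negative. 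In case $(2)$ one argues identically with $\gamma$ via Proposition~\ref{surjective}$(iii)$, obtaining $p_i\leq q_i$ for all $i$. Finally, for pairs with $a_0$ an honest isomorphism one has $\Ext^1(Q_j,Q_i)\cong\HH^1(\mathpzc{Hom}^\bullet(Q_j,Q_i))=0$, so no nontrivial extension exists and the corresponding stratum is empty; for all remaining pairs $a_0$ is not an isomorphism, and Lemma~\ref{bund for Euler} yields $\chi(t_i,t_j)<0$ (using $g\geq 2$), completing the argument.
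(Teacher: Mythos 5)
Your overall route coincides with the paper's: stratify the flip locus by the type of the Jordan--H\"older graded, bound each stratum by Proposition~\ref{Prop:bound-for-dim}, compute $\dim\mathcal{M}^s_{\alpha_c^\pm}(t)=1-\chi(t,t)$ from Propositions~\ref{smooth} and~\ref{smooth0}, reduce via Lemma~\ref{extension} and the index manipulation to showing $\chi(t_i,t_j)<0$ for $i\neq j$, and obtain this from Lemma~\ref{bund for Euler}. Your verification of the ``same sign'' hypothesis --- applying Proposition~\ref{surjective} to the $\alpha_c$-polystable bundle $\mathrm{Gr}(E)$, whose Higgs field is block-diagonal, so that injectivity of $\beta$ (resp.\ $\gamma$) forces injectivity of every $\beta_i$ (resp.\ $\gamma_i$) --- is a correct and welcome expansion of what the paper asserts in one line.

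The gap is in your final step, the case where $a_0$ is an honest isomorphism. The codimension of the stratum of bundles whose Jordan--H\"older filtration has $Q_i$ preceding $Q_j$ is controlled by $-\chi(t_i,t_j)$, that is, by the complex $\mathpzc{Hom}^\bullet(Q_i,Q_j)$, whose $\HH^1$ is $\Ext^1(Q_i,Q_j)$ --- the extensions with $Q_j$ as the \emph{sub}object, which are the opposite of those actually building the filtration. Non-emptiness of that stratum is instead governed by $\Ext^1(Q_j,Q_i)=\HH^1(\mathpzc{Hom}^\bullet(Q_j,Q_i))$, a different complex. So when the $a_0$ of $\mathpzc{Hom}^\bullet(Q_i,Q_j)$ is an isomorphism and your codimension estimate degenerates (for $m=2$ it becomes $-\chi(t_1,t_2)=0$), you cannot conclude that the stratum is empty: that would require the vanishing of the \emph{other} group $\Ext^1(Q_j,Q_i)$, about which you know nothing. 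The paper closes this case differently, by arguing that $a_0$ cannot be an isomorphism for the relevant pairs: under the sign condition one has $\rk(\mathpzc{Hom}^0)-\rk(\mathpzc{Hom}^1)=(p_i-q_i)(p_j-q_j)\geq 0$, so an isomorphism forces equality of ranks, and in that case it would force the Higgs fields to be isomorphisms, which is impossible because they are twisted by a line bundle of positive degree. You need to replace the emptiness claim by an argument of this kind, i.e.\ establish $\chi(t_i,t_j)<0$ for \emph{every} ordered pair $i\neq j$.
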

\begin{proof}
{}From
  Propositions~\ref{smooth} and \ref{smooth0},
  $\mathcal{M}^s_{\alpha_c^\pm}$ is smooth of dimension $1-\chi(t,t)$.
  Hence, using that by Lemma~\ref{extension}
  $\chi(t,t)=\underset{1\leq i,j \leq m}{\sum}\chi(t_i,t_i)$, we have
 \begin{align*}
 \codim \mathcal{S}_{\alpha _c\frac{+}{}}&=\dim \mathcal{M}_{\alpha _c\frac{+}{}}^s(t)-\dim \mathcal{S}_{\alpha _c\frac{+}{}}\\
 &=1-\chi(t,t)-\dim \mathcal{S}_{\alpha _c\frac{+}{}}\\
 &=1-\sum_{i,j}\chi(t_i,t_j)-\dim \mathcal{S}_{\alpha _c\frac{+}{}},
  \end{align*}
where $t_i$, $t_i$ and $m$ occur in
$\mathrm{Gr}(E)=\bigoplus_{i=1}^mQ_i$ coming from a $\alpha_c$-Jordan-H\"{o}lder filtration of $E$. Now using the inequality $(\ref{bund})$ we get that the codimension of the strictly semistable locus is at least 
\begin{align*}
 &\min\{1-\sum_{i,j}\chi(t_i,t_j)+\sum_{i\leq
   j}\chi(t_j,t_i)+\frac{m(m-3)}{2}\}\\
     =&\min\{-\sum_{j<i}\chi(t_j,t_i)+\frac{m(m-3)+2}{2}\},
\end{align*}
where the minimum is taken over all $t_i$ and $m$. Now we show that
$Q_i$ and $Q_j$ satisfy the hypotheses of 
Lemma~\ref{bund for Euler}. 
Using Proposition~\ref{surjective}, the hypotheses $(1)$ and $(2)$ imply that $\beta$ and $\gamma$
are injective, respectively. Therefore in both cases $p_j-q_j$ and
$p_i-q_i$ have the same sign, for all $i, j$. Note that there are some
$i$ and $j$ such that the map $a_0$ of the $\mathrm{Hom}$-complex
$\mathpzc{Hom}^\bullet(Q_j,Q_i)$ is not an isomorphism, since
otherwise $\mathpzc{End}^\bullet(E)$ will be an isomorphism which is
not possible. This is because for $p\neq q$ we have
$\rk(\mathpzc{End}^0)>\rk(\mathpzc{End}^1)$ which implies that the map
$a_0$ can not be an isomorphism, and for $p = q$ it can be an
isomorphism only if $\beta$ and $\gamma$ both are isomorphisms but
this is not possible since these maps are twisted with a degree
positive line bundle.
\par Hence we have that $-\chi(t_j,t_i)> 0$ and therefore 
$$\codim \mathcal{S}_{\alpha _c\frac{+}{}}>\mbox{min}\{\frac{m(m-3)+2}{2}\}.$$
Clearly, the minimum is attained when $m=2$ giving the result. 
\end{proof}

\begin{remark}
  For $q=1$, one might have hoped to obtain a stronger result in
  Proposition~\ref{codimension}, based on (1) of
  Proposition~\ref{Prop:bound-for-dim}. The problem is that we also
  need to satisfy the hypotheses of Lemma~\ref{bund for Euler} and
  this requires injectivity of $\beta$ or $\gamma$.
\end{remark}

From Proposition~\ref{codimension} we immediately obtain the following. 
\begin{theorem}\label{birational}
Fix a type $t=(p,q,a,b)$. Let $\alpha_c$ be a critical value. Suppose that either one of the following conditions holds:
\begin{itemize}
\item[$(1)$] $a/p-b/q>-\deg(L) $, $q\leq p$ and $0\leq\alpha_c^\pm<\frac{2pq}{pq-q^2+p+q}\big(b/q-a/p-\deg(L)\big)+\deg(L)$,
\item[$(2)$] $a/p-b/q<\deg(L)$, $p\leq q$ and $\frac{2pq}{pq-p^2+p+q}(b/q-a/p+\deg(L))-\deg(L)<\alpha_c^\pm\leq 0$.
\end{itemize}
Then the moduli spaces $\mathcal{M}^s_{\alpha _c^-}(t)$ and
$\mathcal{M}^s_{\alpha _c^+}(t)$ are birationally equivalent. In
particular, if either of the conditions of
Lemma~\ref{lem:no-alpha-independent-semistability} holds then the moduli spaces $\mathcal{M}_{\alpha _c^-}(t)$ and
$\mathcal{M}_{\alpha _c^+}(t)$ are birationally equivalent.
\end{theorem}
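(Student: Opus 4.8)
The plan is to show that away from the flip loci the two moduli problems coincide, and then to upgrade the resulting isomorphism of open subsets to a birational equivalence by invoking smoothness together with the codimension estimate of Proposition~\ref{codimension}.

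First I would record the elementary observation underlying the whole argument: a twisted $\U(p,q)$-Higgs bundle representing a point of $\mathcal{M}^s_{\alpha_c^+}(t)\setminus\mathcal{S}_{\alpha_c^+}$ is, by the very definition of the flip locus, simultaneously $\alpha_c^+$-stable and $\alpha_c^-$-semistable. Since $\epsilon$ is chosen so that $\alpha_c$ is the only critical value in $(\alpha_c^-,\alpha_c^+)$, the parameters $\alpha_c^\pm$ are generic; hence any subobject violating $\alpha_c^-$-stability would have to realise $\alpha$-independent strict semistability, and such a subobject would equally contradict $\alpha_c^+$-stability. Therefore the bundle is in fact $\alpha_c^-$-stable, and running the same argument with the roles of $+$ and $-$ reversed gives a canonical identification
\begin{displaymath}
  \mathcal{M}^s_{\alpha_c^+}(t)\setminus\mathcal{S}_{\alpha_c^+}
  \;\cong\;
  \mathcal{M}^s_{\alpha_c^-}(t)\setminus\mathcal{S}_{\alpha_c^-},
\end{displaymath}
which is the identity on underlying bundles and an isomorphism of varieties because both sides are the locus of bundles stable for \emph{both} parameters, carrying the same moduli-space structure away from the wall.

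Next I would invoke Proposition~\ref{smooth}, which under the stated hypotheses guarantees that $\mathcal{M}^s_{\alpha_c^\pm}(t)$ are smooth, together with Proposition~\ref{codimension}, which guarantees that the flip loci $\mathcal{S}_{\alpha_c^\pm}$ have strictly positive codimension. On a smooth variety a closed subset of positive codimension has dense complement, so the two open subsets above are dense, and an isomorphism between dense opens is precisely a birational equivalence $\mathcal{M}^s_{\alpha_c^-}(t)\sim\mathcal{M}^s_{\alpha_c^+}(t)$, establishing the first assertion.

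Finally, for the statement about the full moduli spaces, I would argue that under either hypothesis of Lemma~\ref{lem:no-alpha-independent-semistability} the phenomenon of $\alpha$-independent strict semistability cannot occur; as $\alpha_c^\pm$ are generic, a strictly $\alpha_c^\pm$-semistable bundle would necessarily be $\alpha$-independent strictly semistable, which is now excluded, so at these parameters semistability coincides with stability and $\mathcal{M}_{\alpha_c^\pm}(t)=\mathcal{M}^s_{\alpha_c^\pm}(t)$; the birationality of the full spaces then follows from the first part. The genuinely hard work — the codimension estimate — has already been carried out in Proposition~\ref{codimension}, so the only point here demanding care is the first step: verifying that outside the flip loci the two stability conditions genuinely agree, so that the set-theoretic bijection of complements is an isomorphism of varieties and not merely a bijection, and that the common open locus is dense.
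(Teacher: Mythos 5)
Your proposal is correct and follows exactly the route the paper takes: the paper simply states that Theorem~\ref{birational} follows immediately from Proposition~\ref{codimension}, and your write-up supplies the standard details of that deduction (identification of the complements of the flip loci, density via smoothness plus positive codimension, and the reduction of the full moduli spaces to the stable loci via Lemma~\ref{lem:no-alpha-independent-semistability} and genericity of $\alpha_c^\pm$). Nothing further is needed.
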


\begin{remark}
  \label{rem:main-thm-range}
  In view of Remark~\ref{rem:alpha-0-positive}, non-emptiness of the
  intervals for $\alpha_c^\pm$ in the preceding
  theorem bounds the Toledo invariant. Thus the ranges for
  the Toledo invariant $\tau=\frac{2pq}{p+q}(a/p-b/q)$ for which the statement of the theorem is
  meaningful are:
  \begin{itemize}
  \item[(1)] $-\frac{2pq}{p+q}\deg(L)<\tau<-(q-1)\deg(L))$;
  \item[(2)] $(p-1)\deg(L)<\tau<\frac{2pq}{p+q}\deg(L)$.
  \end{itemize}
  Note that in case (1) we have $q\leq p$  and hence
  $-\frac{2pq}{p+q}\deg(L)\leq -q\deg(L)$, while in case (2) we have
  $p\leq q$ and hence $p\deg(L)\leq \frac{2pq}{p+q}\deg(L)$.
\end{remark}

Finally we have the following corollary.
\begin{theorem}\label{irreducible}
Let $L=K$ and fix a type $t=(p,q,a,b)$. Suppose that $(p+q,a+b)=1$ and
that $\tau=\frac{2pq}{p+q}(a/p-b/q)$ satisfies $\abs{\tau}\leq\min\{p,q\}(2g-2)$. Suppose that either one of the following conditions holds:
\begin{itemize}
\item[$(1)$] $a/p-b/q>-(2g-2) $, $q\leq p$ and $0\leq\alpha<\frac{2pq}{pq-q^2+p+q}\big(b/q-a/p-(2g-2)\big)+2g-2$,
\item[$(2)$] $a/p-b/q<2g-2$, $p\leq q$ and $\frac{2pq}{pq-p^2+p+q}(b/q-a/p+2g-2)-(2g-2)<\alpha\leq 0$.
\end{itemize}
Then the moduli space $\mathcal{M}_\alpha(t)$ is irreducible.  
\end{theorem}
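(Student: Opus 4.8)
The plan is to propagate irreducibility from the single known case $\alpha=0$ across the whole parameter range by means of the wall-crossing birationality of Theorem~\ref{birational}. Since $L=K$ we have $\deg(L)=2g-2$, so the two ranges for $\alpha$ in the statement are exactly those of Theorem~\ref{birational}, and in particular $\alpha=0$ always lies in the relevant interval. The anchor for the whole argument is the result of Bradlow--Garc\'\i a-Prada--Gothen \cite{bradlow-garcia-prada-gothen:2003}: under the coprimality hypothesis $(p+q,a+b)=1$ and the Toledo bound $\abs{\tau}\le\min\{p,q\}(2g-2)$ (which, via the corollary to Proposition~\ref{bound for alpha}, is equivalent to $0\in[\alpha_m,\alpha_M]$ and hence to non-emptiness), the moduli space $\mathcal{M}_0(t)$ of $\U(p,q)$-Higgs bundles is irreducible.

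First I would use the coprimality hypothesis to eliminate strictly semistable points for generic parameters. By Lemma~\ref{lem:no-alpha-independent-semistability}(i) with $m=0$, the condition $(p+q,a+b)=1$ rules out $\alpha$-independent strict semistability. Consequently, at any non-critical $\alpha$ every $\alpha$-semistable bundle is in fact $\alpha$-stable, so $\mathcal{M}_\alpha(t)=\mathcal{M}^s_\alpha(t)$ is smooth, projective, and of pure dimension $1-\chi(t,t)$ by Propositions~\ref{smooth} and \ref{smooth0}; moreover $\alpha=0$ is itself non-critical, since its being critical would require a proper subobject with $\frac{a'+b'}{p'+q'}=\frac{a+b}{p+q}$, which coprimality forbids for $0<p'+q'<p+q$. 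The coprimality also activates the ``in particular'' clause of Theorem~\ref{birational}, so that across each wall the full moduli spaces $\mathcal{M}_{\alpha_c^-}(t)$ and $\mathcal{M}_{\alpha_c^+}(t)$ are birationally equivalent.

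Next I would run the propagation. Inside the range there are only finitely many critical values, which cut it into open chambers; on each chamber the stability condition is constant, so $\mathcal{M}_\alpha(t)$ is independent of $\alpha$ up to isomorphism. Starting from the chamber containing $\alpha=0$, where the moduli space is irreducible by \cite{bradlow-garcia-prada-gothen:2003}, I would cross one wall $\alpha_c$ at a time. By Proposition~\ref{codimension} the flip loci $\mathcal{S}_{\alpha_c^\pm}$ have strictly positive codimension, so $\mathcal{M}_{\alpha_c^-}(t)$ and $\mathcal{M}_{\alpha_c^+}(t)$ share a common dense open subset, namely the locus of bundles stable on both sides; since both are smooth of the same pure dimension, no component can be contained in a flip locus, and therefore irreducibility of one side forces irreducibility of the other. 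An induction on the number of walls between $0$ and $\alpha$ then yields irreducibility for every generic $\alpha$ in the range.

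The remaining, and main, obstacle is to treat a parameter $\alpha$ that is itself a critical value, since Theorem~\ref{birational} only compares the neighbouring chambers and not $\mathcal{M}_{\alpha_c}(t)$ itself. Here I would argue that the $\alpha_c$-stable locus $\mathcal{M}^s_{\alpha_c}(t)$ coincides with the common dense open subset used above and is therefore irreducible, and that its complement, the strictly $\alpha_c$-semistable locus, has positive codimension by the dimension estimate of Proposition~\ref{Prop:bound-for-dim}; density of $\mathcal{M}^s_{\alpha_c}(t)$ in $\mathcal{M}_{\alpha_c}(t)$ then gives irreducibility of the polystable moduli space at the wall. Making this density statement precise, and relatedly justifying the ``birational $\Rightarrow$ irreducible'' step for the possibly non-smooth polystable spaces, is the delicate point; everything else is bookkeeping of the ranges already recorded in Theorem~\ref{birational}.
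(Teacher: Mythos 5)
Your proposal is correct and follows essentially the same route as the paper: anchor irreducibility at $\alpha=0$ via \cite{bradlow-garcia-prada-gothen:2003} (where the coprimality and Toledo hypotheses enter) and propagate across the finitely many walls in the given range using the birational equivalences of Theorem~\ref{birational}. You are in fact more explicit than the paper's two-line proof, which does not spell out the chamber structure, the smoothness/codimension argument that upgrades ``birational'' to ``irreducibility transfers'', or the case where $\alpha$ is itself a critical value --- the point you rightly flag as delicate.
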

\begin{proof}
Recall that the value of the parameter for which the non-abelian Hodge
Theorem applies is $\alpha=0$. Thus, using \cite[Theorem~6.5]{bradlow-garcia-prada-gothen:2003}, the moduli
space $\mathcal{M}_0(t)$ is irreducible and non-empty (both the
co-primality condition and the bound on the Toledo invariant are
needed for this). Hence the result follows from Theorem~\ref{birational}.
\end{proof}

\begin{remark}
  Note that unless $p=q$, the conditions on $a/b-b/q$ in the preceding theorem are
  guaranteed by the hypothesis $\abs{\tau}\leq\min\{p,q\}(2g-2)$
  (cf.~Remark~\ref{rem:main-thm-range}).
\end{remark}

\begin{remark}
  In the non-coprime case it is known from
  \cite{bradlow-garcia-prada-gothen:2003} that the
  closure of the stable locus in $\mathcal{M}_0(t)$ is connected
  (however, irreducibility is still an open question). Thus, in the
  non-coprime case, the closure of the stable locus of
  $\mathcal{M}_\alpha(t)$ is connected under the remaining
  hypotheses of the preceding theorem.
\end{remark}


\bigskip
\emph{Date:} \today


\begin{thebibliography}{99}

\bibitem{alvarez-garcia-prada:2001}
L.~{\'A}lvarez-C{\'o}nsul and O.~Garc{\'\i}a-Prada, \emph{Dimensional reduction,
  $\mathrm{SL}(2,\mathbb{C})$-equivariant bundles and stable holomorphic
  chains},
\newblock Int. J. Math. \textbf{12} (2001), 159--201.
 
\bibitem{Prada:2003}
L.~\'{A}lvarez~C\'{o}nsul and O.~Garc{\'{\i}}a-Prada, \emph{Hitchin-Kobayashi Correspondence, Quivers, and Vortices},
Commun. Math. Phys. \textbf{238} (2003), 1--33.

\bibitem{Schmitt:2006}
L.~\'{A}lvarez~C\'{o}nsul, O.~Garc{\'{\i}}a-Prada and A.~H.~W.~Schmitt, \emph{On the geometry of moduli spaces of holomorphic chains over compact Riemann surfaces},
 Internat. Math. Res. Papers, Art ID 73597 (2006) 1--82.

\bibitem{Ramanan}
I. Biswas and S. Ramanan, \emph{An infinitesimal study of the moduli of Hitchin pairs},
 J. London Math. Soc. (2)\textbf{49} (1994), 219--231.

\bibitem{biquard-garcia-rubio:2015}
O.~Biquard, O.~Garcia-Prada, and R.~Rubio, \emph{Higgs bundles,
 Toledo invariant and the Cayley correspondence},
  arXiv:1511.07751v2 [math.DG] (2015),
  \url{http://arxiv.org/abs/1511.07751}.

\bibitem{bradlow:1991} 
S.~B. Bradlow, \emph{Special metrics and
    stability for holomorphic bundles with global sections},
  J. Differential Geom. \textbf{33} (1991), 169--213.

\bibitem{Bradlow} S.B. Bradlow and O. Garc{\'\i}a-Prada, \emph{Stable triples, equivariant bundles and dimensional reduction}, Math. Ann. \textbf{304} (1996), 225--252.

\bibitem{bradlow-garcia-prada-gothen:2003}
S.~B. Bradlow, O.~Garc{\'\i}a-Prada, and P.~B. Gothen, \emph{Surface group
  representations and $\mathrm{U}(p,q)$-{H}iggs bundles}, J. Differential Geom.
  \textbf{64} (2003), 111--170.

\bibitem{bradlow-garcia-prada-gothen:2004}
\bysame, \emph{Moduli spaces of holomorphic triples over compact {R}iemann
  surfaces}, Math. Ann. \textbf{328} (2004), 299--351.

\bibitem{bradlow-garcia-prada-gothen:hss-higgs}
\bysame, \emph{Maximal surface group representations in isometry groups of
 classical Hermitian symmetric spaces}, Geometriae Dedicata \textbf{122}
 (2006), 185--213.
%
\bibitem{Gothen:2013}
\bysame, \emph{Higgs bundles for the non-compact dual of the special orthogonal group},  Geometriae Dedicata \textbf{175} (2015), 1--48.
%
\bibitem{corlette:1988}
K.~Corlette, \emph{Flat ${G}$-bundles with canonical metrics}, J. Differential
 Geom. \textbf{28} (1988), 361--382.
%
\bibitem{donaldson:1987}
S.~K. Donaldson, \emph{Twisted harmonic maps and the self-duality equations},
 Proc. London Math. Soc. (3) \textbf{55} (1987), 127--131.
%

\bibitem{garcia-heinloth:2013}
O. Garc{\'{\i}}a-Prada and J.~Heinloth, \emph{The {$y$}-genus of the
  moduli space of {${\rm PGL}_n$}-{H}iggs bundles on a curve (for degree
  coprime to {$n$})}, Duke Math. J. \textbf{162} (2013),
2731--2749.
%
\bibitem{garcia-heinloth-schmitt:2014}
O. Garc{\'{\i}}a-Prada, J. Heinloth, and A.~Schmitt, \emph{On the
  motives of moduli of chains and {H}iggs bundles}, J. Eur. Math. Soc.
\textbf{16} (2014), 
2617--2668.
%
\bibitem{garcia-logares-munoz:2009}
O.~Garc{\'{\i}}a-Prada, M.~Logares, and V.~Mu{\~n}oz, \emph{Moduli spaces
  of parabolic {${\rm U}(p,q)$}-{H}iggs bundles}, Q. J. Math. \textbf{60}
(2009),
183--233. 
%
\bibitem{Gothen:2012} O.~Garc{\'\i}a-Prada, P. B. Gothen, and
  I. Mundet i Riera, \emph{Higgs bundles and surface group
    representations in the real symplectic group}, Journal of Topology
  \textbf{6} (2013), 64--118.
%
\bibitem{garcia-gothen-mundet:2009b}
O.~Garc{\'{\i}}a-Prada, P.~B. Gothen, and I.~Mundet~i Riera, \emph{The
 {H}itchin-{K}obayashi correspondence, {H}iggs pairs and surface group
 representations}, arXiv:0909.4487v3 [math.AG] (2012).
%
\bibitem{gothen:2014}
P.~B. Gothen, \emph{Representations of surface groups and {H}iggs bundles},
  Moduli spaces, London Math. Soc. Lecture Note Ser., vol. 411, Cambridge Univ.
  Press, Cambridge, 2014, pp.~151--178. 
%
\bibitem{Gothen:2005}
 P.B. Gothen and A.D. King, \emph{Homological algebra of twisted quiver bundles},
 J. London Math. Soc. \textbf{71} (2005), 85--99.

\bibitem{Hitchin:1987}
N.~J. Hitchin, \emph{The self-duality equations on a Riemann surface}, Proc. LMS \textbf{55, 3} (1987), 59--126.
%
\bibitem{hitchin:1992}
N.~J. Hitchin, \emph{{L}ie groups and {T}eichm\"{u}ller space}, Topology
  \textbf{31} (1992), 449--473.
%
\bibitem{munoz:2008}
V. Mu{\~n}oz, \emph{Hodge polynomials of the moduli spaces of rank 3
  pairs}, Geom. Dedicata \textbf{136} (2008), 17--46.

\bibitem{munoz:2009}
\bysame, \emph{Torelli theorem for the moduli spaces of pairs}, Math. Proc.
Cambridge Philos. Soc. \textbf{146} (2009), 
675--693.

\bibitem{munoz:2010}
\bysame, \emph{Hodge structures of the moduli spaces of pairs}, Internat. J.
Math. \textbf{21} (2010),
1505--1529.

\bibitem{munoz-ortega-vazques:2007}
V. Mu{\~n}oz, D. Ortega, and M. V{\'a}zquez-Gallo,
  \emph{Hodge polynomials of the moduli spaces of pairs}, Internat. J. Math.
  \textbf{18} (2007), 
  695--721.

\bibitem{munoz-ortega-vazques:2009}
\bysame, \emph{Hodge polynomials of the moduli spaces of triples of rank
  {$(2,2)$}}, Q. J. Math. \textbf{60} (2009),
235--272. 

\bibitem{nitsure:1991}
N.~Nitsure, \emph{Moduli of semistable pairs on a curve}, Proc. London Math.
  Soc. (3) \textbf{62} (1991), 275--300.

\bibitem{nozad:2016}
A.~Nozad, \emph{Hitchin pairs for the indefinite unitary group}, Ph.D.
  thesis, Faculty of Sciences, University of Porto, 2016.

\bibitem{Schmitt:2008} A.~H.~W.~Schmitt, \emph{Geometric invariant
    theory and decorated principal bundles}, Z\"{u}rich Lectures in
  Advanced Mathematics, European Mathematical Society, 2008.


\bibitem{simpson:1988}
C.~T. Simpson, \emph{Constructing variations of {H}odge structure using
 {Y}ang-{M}ills theory and applications to uniformization}, J. Amer. Math.
 Soc. \textbf{1} (1988), 867--918.
%
\bibitem{simpson:1992}
\bysame, \emph{Higgs bundles and local systems}, Inst. Hautes {\'E}tudes Sci.
 Publ. Math. \textbf{75} (1992), 5--95.
%
\bibitem{thaddeus:1994}
M.~Thaddeus, \emph{Stable pairs, linear systems and the {V}erlinde formula},
 Invent. Math. \textbf{117} (1994), 317--353.
  
\end{thebibliography}
\end{document}